\numberwithin{equation}{section}
\newtheorem{thm}{Theorem}[section]
\newtheorem{lem}{Lemma}[section]
\newtheorem{prop}{Proposition}[section]
\theoremstyle{definition}
\newtheorem{defn}{Definition}[section]
\theoremstyle{remark}
\newtheorem{rem}{Remark}[section]
\def\supetage#1#2{
\sup_{\scriptstyle {#1}\atop\scriptstyle {#2}} }
\begin{document}
\title{Compressible flows with a density-dependent viscosity coefficient\thanks{
This work is supported by NSFC 10871175} }
\author{Ting Zhang\thanks{E-mail: zhangting79@hotmail.com}, Daoyuan Fang\thanks{E-mail:
dyf@zju.edu.cn}\\
\textit{\small Department of Mathematics, Zhejiang University,
Hangzhou 310027, China} }
\date{}
\maketitle
\begin{abstract}
  We prove the global existence of weak solutions for the 2-D compressible
  Navier-Stokes equations with a density-dependent viscosity
  coefficient ($\lambda=\lambda(\rho)$). Initial data and solutions are small in energy-norm
  with nonnegative densities having arbitrarily large sup-norm.
  Then, we show that if there is a vacuum domain at the
  initial time, then the vacuum domain will retain for all time, and
  vanishes as time goes to infinity.  At last, we show that the condition of $\mu=$constant will induce
   a singularity of the system at vacuum. Thus, the viscosity coefficient $\mu$ plays a key role in the
Navier-Stokes equations.
\end{abstract}
\section{Introduction}
In this paper, we consider the following 2-D compressible
Navier-Stokes equations
    \begin{equation}
    \left\{
    \begin{array}{l}
        \rho_t+\mathrm{div}(\rho u)=0,\\
            (\rho u)_t+\mathrm{div}(\rho u\otimes u)+\nabla P=
            \mu\Delta
            u+\nabla((\mu+\lambda(\rho))\mathrm{div}u)+\rho f,
    \end{array}
    \right.\label{2VDD-E1.1}
    \end{equation}
for $x\in\mathbb{R}^2$ and $t>0$, with the boundary and initial
conditions
    \begin{equation}
      u(x,t)\rightarrow0,
      \ \rho(x,t)\rightarrow \tilde{\rho}>0, \ \textrm{ as
      }|x|\rightarrow\infty, \ t>0,
    \end{equation}
    \begin{equation}
      (\rho,u)|_{t=0}=(\rho_0,u_0).\label{2VDD-E1.2}
    \end{equation}
Here $\rho({x},t)$, $u({x},t)$ and $P=P(\rho)$ stand for the fluid
density, velocity and pressure respectively, $f$ is a given external
force, the dynamic viscosity coefficient $\mu$ is a positive
constant, the second viscosity coefficient $\lambda=\lambda(\rho)$
is a function of $\rho$.

At first, we prove the global existence of weak solutions that are
in an "intermediate" regularity class in which energies are small,
but oscillations are arbitrarily large. Specifically, we fix a
positive constant  $\tilde{\rho}$, assume that
$(\rho_0-\tilde{\rho},u_0)$ are small  in $L^2$,  and
$(\rho_0,u_0)\in L^\infty\times H^1$ with no restrictions on their
norms. Our existence result accommodates a wide class of pressures
$P$, including pressures that are not monotone in $\rho$. Since the
solutions may exhibit vacuum states and discontinuities in density
and velocity gradient across hypersurfaces, our results are
consequently much less regular and much more general than the
well-known small-smooth theory, such as
\cite{Danchin2004,Marsumura83}. This existence result generalizes
and improves upon the earlier result of
    Vaigant-Kazhikhov \cite{Vaigant} in two significant ways: the space domain is unbounded and the
    initial density may vanish in an
open set. It also generalizes and improves upon earlier results of
Hoff-Santos \cite{Hoff2008} and Hoff
\cite{Hoff2005,Hoff1995,Hoff1995-2} in two significant ways: the
second viscosity
    coefficient $\lambda$ is a function of the density $\rho$, and
     we omit the condition $\int (1+|x|^2)^a(\rho_0|u_0|^2+G(\rho_0))dx\leq
     M_0$ with a constant $a>0$.

We now give a precise formulation of our existence result.
\begin{defn}
We say that $(\rho,u)$ is a weak solution of
(\ref{2VDD-E1.1})--(\ref{2VDD-E1.2}), if $\rho$ and $u$ are suitably
integrable and satisfy that
\begin{itemize}
    \item
        \begin{equation}
          \left.\int\rho\phi
          dx\right|^{t_2}_{t_1}=\int^{t_2}_{t_1}\int(\rho\phi_t+\rho
          u\cdot\nabla\phi)dxdt\label{2VDD-E1.3}
        \end{equation}
        for all times $t_2\geq t_1\geq0$ and all $\phi\in
        C^1_0(\mathbb{R}^2\times[t_1,t_2])$,
    \item
      \begin{eqnarray}
          &&\left.\int\rho u\psi
          dx\right|^{t_2}_{t_1}-\int^{t_2}_{t_1}\int\{\rho
          u\cdot\psi_t+\rho(u\cdot\nabla \psi
          )\cdot u+P\mathrm{div}\psi\}dxdt\nonumber\\
                &=&-\int^{t_2}_{t_1}\int
          \{\mu \partial_k
          u^j\partial_k\psi^j+(\mu+\lambda)\mathrm{div}u\mathrm{div}\psi
          -\rho f\psi\}dxdt\label{2VDD-E1.4}
        \end{eqnarray}
        for all times $t_2\geq t_1\geq0$ and all $\psi=(\psi^1,\psi^2)\in
        (C^1_0(\mathbb{R}^2\times[t_1,t_2]))^2$.
\end{itemize}
\end{defn}

Concerning the pressure $P$, viscosity coefficients $\mu$ and
$\lambda$, we fix $0<\tilde{\rho}<\bar{\rho}$ and assume that
    \begin{equation}
      \left\{
      \begin{array}{l}
      P\in C^1([0,\bar{\rho}]),\ \lambda\in C^2([0,\bar{\rho}]),\\
       \mu>0,
      \ \lambda(\rho)\geq0,\ \rho\in[0,\bar{\rho}],\\
           P(0)=0,
              \ P'(\tilde{\rho})>0, \\
              (\rho-\tilde{\rho})[P(\rho)-P(\tilde{\rho})]>0,
                \ \rho\neq\tilde{\rho},\ \rho\in[0,\bar{\rho}],\\
              P\in C^2([0,\bar{\rho}])
              \ \textrm{ or }\    \frac{P(\cdot)}{2\mu+\lambda(\cdot)}
              \ \textrm{ is a monotone function on}\ [0,\bar{\rho}].
      \end{array}
      \right.\label{2VDD-E1.5}
    \end{equation}
Let $G$ be the potential energy density, defined by
    \begin{equation}
      G(\rho)=\rho\int^{\rho}_{\tilde{\rho}}\frac{P(s)-P(\tilde{\rho})}{s^2}ds.
    \end{equation}
Then for any $g\in C^2([0,\bar{\rho}])$ with
$g(\tilde{\rho})=g'(\tilde{\rho})=0$, there is a constant $C$ such
that $|g(\rho)|\leq CG(\rho)$, $\rho\in[0,\bar{\rho}]$.

    We measure the sizes of the initial data and the external
    force by
        \begin{equation}
          C_0=\int\left(\frac{1}{2}\rho_0|u_0|^2+G(\rho_0)\right)dx,
        \end{equation}
            \begin{equation}
              C_f=\sup_{t\geq0}\|f(\cdot,t)\|_{L^2}^2+
              \int^\infty_0\left(
                \|f(\cdot,t)\|_{L^2}+\|f(\cdot,t)\|_{L^2}^2
                +\sigma^4\|\nabla f\|_{L^4}^4+\sigma^3\|
                f_t(\cdot,t)\|_{L^2}^2
              \right)dt,
            \end{equation}
 and
        \begin{eqnarray}
          M_q&=&\int^\infty_0\left(
      \sigma^2\| f_t\|_{L^2}^2+\sigma^3\|\nabla
      f\|_{L^4}^4+\sigma^{2+\frac{q}{2}}\|f_t\|_{L^{2+q}}^{2+q}
      +\sigma^{2+\frac{q}{2}}\|\nabla f\|_{L^{4+2q}}^{4+2q}
      \right)dt\nonumber
          \\                    &&+\int|\nabla u_0|^2dx+\sup_{t\geq0}\|f\|_{L^{2+q}},\label{2VDD-E1.10}
        \end{eqnarray}
     where  $\sigma(t)=\min\{1,t\}$ and $q$ is
     a constant satisfying
        \begin{equation*}
          q\in(0,2)\ \textrm{ and }
          q^2<\frac{4\mu}{\mu+\lambda(\rho)},
          \ \forall\ \rho\in[0,\bar{\rho}].
        \end{equation*}

As in \cite{Hoff2008,Hoff2005}, we recall the definition of the
vorticity matrix $w^{j,k}=\partial_k u^j-\partial_j u^k$, and
define the function
    \begin{equation}
      F=(\lambda+2\mu)\mathrm{div}u-P(\rho)+P(\tilde{\rho}).
    \end{equation}
We also define the convective derivative $\frac{D}{Dt}$ by
$\frac{Dw}{Dt}=\dot{w}=w_t+u\cdot\nabla w$,  the H\"{o}lder norm
        $$
        <v>^\alpha_A=\supetage{x,y\in A}{x\neq
        y}\frac{|v(x)-v(y)|}{|x-y|^\alpha},
        $$
and
        $$
        <g>^{\alpha,\beta}_{A\times[t_1,t_2]}=\supetage{(x,t),(y,s)\in A\times[t_1,t_2]}{(x,t)\neq
        (y,s)}\frac{|g(x,t)-g(y,s)|}{|x-y|^\alpha+|t-s|^\beta},
        $$
where  $v:A\subseteq \mathbb{R}^2\rightarrow \mathbb{R}^2$,
 $g:A\times[t_1,t_2]\rightarrow \mathbb{R}^2$ and
 $\alpha,\beta\in(0,1]$.

The following is the global existence result of this paper.

\begin{thm}\label{2VDD-T1.1}
  Assume that conditions (\ref{2VDD-E1.5})--(\ref{2VDD-E1.10}) hold. Then,
  for a
  given  positive number $M$ (not necessarily small) and
  $\bar{\rho}_1\in(\tilde{\rho},\bar{\rho})$, there are positive
  numbers $\varepsilon$, $C$
  and  $\theta$, such that,  the Cauchy problem
  (\ref{2VDD-E1.1})--(\ref{2VDD-E1.2}) with
  the  initial data $(\rho_0,u_0)$ and external force $f$
  satisfying
    \begin{equation}
      \left\{
      \begin{array}{l}
       0\leq \inf \rho_0\leq\sup\rho_0\leq\bar{\rho}_1,\\
            C_0+C_f\leq\varepsilon,\\
                M_q\leq M,
      \end{array}
      \right.
    \end{equation}
 has a
global weak solution $(\rho,u)$ in the sense of
(\ref{2VDD-E1.3})--(\ref{2VDD-E1.4}) satisfying
    \begin{equation}
       C^{-1}\inf\rho_0\leq\rho\leq\bar{\rho},\
      \textrm{a.e.}\label{2VDD-E1.15}
    \end{equation}
        \begin{equation}
          \left\{
          \begin{array}{l}
            \rho-\tilde{\rho},\ \rho u\in C([0,\infty);H^{-1}),\\
                \nabla u\in L^2(\mathbb{R}^2\times[0,\infty)),
          \end{array}
          \right.
        \end{equation}
        \begin{equation}
          <u>^{\alpha,\frac{\alpha}{2+2\alpha}}_{\mathbb{R}^2\times[\tau,\infty)}+
          \sup_{t\geq \tau}\left(\|\nabla F(\cdot,t)\|_{L^2}+\|\nabla
          w(\cdot,t)\|_{L^2}\right)
          \leq C(\alpha,\tau)(C_0+C_f)^\theta,\label{2VDD-E1.15-1}
        \end{equation}
         \begin{equation}
          \sup_{t>0}\int|\nabla u|^2dx+\int^\infty_0\int\sigma
          |\nabla\dot{u}|^2dxds\leq C,\label{2VDD-E1.15-2}
        \end{equation}
    \begin{equation}
      \int^T_0\|F(\cdot,t)\|_{L^\infty}dt\leq C(T),
    \end{equation}
    \begin{equation}
          <F>^{{\alpha'},\frac{{\alpha'}}{2+2{\alpha'}}}_{\mathbb{R}^2\times[\tau,T]}+
     <w>^{{\alpha'},\frac{{\alpha'}}{2+2{\alpha'}}}_{\mathbb{R}^2\times[\tau,T]}\leq
    C(\tau,T),\label{2VDD-E1.22}
    \end{equation}
where $\alpha\in(0,1)$, $\tau >0$ and
${\alpha'}\in(0,\frac{q}{2+q}]$,  and
    \begin{eqnarray}
      &&\sup_{t>0}\int\left(
      \frac{1}{2}\rho|u|^2+|\rho-\tilde{\rho}|^2
      +\sigma|\nabla u|^2      \right)dx\nonumber\\
            &&+\int^\infty_0\int\left(
            |\nabla u|^2+\sigma|(\rho u)_t+\mathrm{div}(\rho u\otimes
            u)|^2+\sigma^2|\nabla\dot{u}|^2     \right)dxdt\nonumber\\
      &\leq& C(C_0+C_f)^\theta.\label{2VDD-E1.17}
    \end{eqnarray}
In addition, in the case that $\inf\rho_0>0$, the term
$\int^\infty_0\int\sigma|\dot{u}|^2dxdt$ may be included on the left
hand side of (\ref{2VDD-E1.17}).
\end{thm}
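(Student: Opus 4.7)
\medskip

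\noindent\textbf{Proof proposal.}
My plan is to follow the Hoff-type ``intermediate regularity'' framework, with two additional technical layers imposed by the density-dependent second viscosity $\lambda(\rho)$ and by the unboundedness of the domain (in particular, we cannot appeal to decay weights like $(1+|x|^2)^a$). I would work with a family of regularized problems, derive a priori bounds that are uniform in the regularization parameter, and then pass to the limit.

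The approximation step is standard: cut $\rho_0$ from below by $\delta>0$, mollify $(\rho_0,u_0,f)$, and invoke the classical local-in-time strong solvability for the non-vacuum problem to produce smooth solutions $(\rho^\delta,u^\delta)$ on a maximal interval $[0,T^\delta)$. Everything then reduces to obtaining a priori estimates that (i) are independent of $\delta$, (ii) imply $T^\delta=\infty$, and (iii) are strong enough to pass to the limit in the distributional formulation \eqref{2VDD-E1.3}--\eqref{2VDD-E1.4}. First I would derive the basic energy identity by testing the momentum equation with $u$ and using the continuity equation, giving control of $\sup_t\int(\tfrac12\rho|u|^2+G(\rho))\,dx+\int\!\!\int|\nabla u|^2$ by $C_0+C_f$. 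Next I would introduce the effective viscous flux $F$ and the vorticity $w$, observe that
\[
\Delta F=\mathrm{div}(\rho\dot u-\rho f),\qquad \mu\Delta w=\nabla^{\perp}\!\cdot(\rho\dot u-\rho f),
\]
and use standard $L^p$ elliptic theory to trade bounds on $\rho\dot u-\rho f$ for bounds on $\nabla F$, $\nabla w$, and eventually $L^\infty$/Hölder bounds on $F$ and $w$ via Morrey-type embedding; the Sobolev-logarithmic inequality in 2D is the usual vehicle for $\|F\|_{L^\infty}$.

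The heart of the argument, and the part I expect to be the main obstacle, is the propagation of the pointwise upper bound $\rho\le\bar\rho$ in the presence of vacuum and with $\lambda=\lambda(\rho)$. Using the continuity equation I would rewrite it in Lagrangian form by introducing a primitive $\Lambda(\rho)$ of $(2\mu+\lambda(\rho))/\rho$, so that along the particle trajectory $\dot X=u(X,t)$,
\[
\frac{d}{dt}\Lambda(\rho)=-\bigl(F+P(\rho)-P(\tilde\rho)\bigr).
\]
The monotonicity assumption in \eqref{2VDD-E1.5}, combined with the smallness of $C_0+C_f$ and the integral bound on $\|F\|_{L^\infty}$, then furnishes a Gronwall-type comparison argument that forces $\rho(\cdot,t)\in[0,\bar\rho]$ for all $t>0$, provided $\varepsilon$ is chosen small (with $M$ fixed). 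The matching lower bound $\rho\ge C^{-1}\inf\rho_0$ (so that vacuum regions are exactly preserved) comes from the same Lagrangian ODE in the opposite direction; the key is to isolate the sign of $P(\rho)-P(\tilde\rho)$ for $\rho$ near the vacuum and to use the positivity of $P'(\tilde\rho)$ to handle the region $\rho$ near $\tilde\rho$.

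Once the density is controlled, I would close the higher-order estimates by applying $\frac{D}{Dt}$ to the momentum equation and testing with $\dot u$, producing the weighted bound on $\int\!\!\int\sigma|\nabla\dot u|^2$ and on $\sup_t\sigma\int|\nabla u|^2$ claimed in \eqref{2VDD-E1.15-2}; this step requires careful treatment of the commutator terms generated by $\lambda'(\rho)\nabla\rho$ inside $\nabla((\mu+\lambda)\mathrm{div}\,u)$, which is where the hypothesis $\lambda\in C^2$ is used. The Hölder bounds \eqref{2VDD-E1.15-1} and \eqref{2VDD-E1.22} then follow from elliptic theory applied to $F$ and $w$ together with the $M_q$-based $L^{2+q}$ integrability of $f_t$ and $\nabla f$, which converts $L^{2+q}$ bounds on $\rho\dot u-\rho f$ into Hölder exponents $\alpha'\le q/(2+q)$. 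Finally, the passage to the limit $\delta\to 0$ is done using Aubin--Lions compactness: the uniform bounds yield strong $L^p_{\mathrm{loc}}$ convergence of $\rho^\delta$, weak-$\ast$ convergence of $\nabla u^\delta$, and the nonlinear terms $\rho^\delta u^\delta\otimes u^\delta$ and $(\mu+\lambda(\rho^\delta))\mathrm{div}\,u^\delta$ are identified in the limit in the usual way, using the strong convergence of $\rho^\delta$ and the $L^2_tH^1_x$ bound on $u^\delta$. The additional estimate $\int\!\!\int\sigma|\dot u|^2$ in the non-vacuum case follows by testing with $\dot u$ and using the pointwise lower bound on $\rho$, which fails precisely when vacuum is present.
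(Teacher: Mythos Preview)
Your outline of the a priori estimate machinery is essentially right and matches the paper: the bootstrap on $A_1+A_2$, the Lagrangian ODE for $\Lambda(\rho)$, and the role of the $M_q$-based $L^{2+q}$ bounds on $\rho\dot u$. Two remarks on that part. First, in 2D the $L^\infty$ control of $F$ does not come from a log-Sobolev inequality but from Gagliardo--Nirenberg with $\nabla F\in L^{2+q}$, $q>0$; this is precisely why the paper proves the higher-moment estimate $\sigma^{2+q/2}\!\int\rho|\dot u|^{2+q}\,dx\le C(M_q)$ (and why the structural restriction $q^2<4\mu/(\mu+\lambda)$ appears). Second, the density bound cannot precede the higher-order estimates as in your ordering, because the time-integrability of $\|F\|_{L^\infty}$ near $t=0$ already needs those estimates; the paper handles the circularity by a continuation argument in which both $\rho\le\bar\rho$ and $A_1+A_2\le 2(C_0+C_f)^\theta$ are assumed and then strictly improved.

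The genuine gap is the compactness step. Aubin--Lions will \emph{not} yield strong $L^p_{\mathrm{loc}}$ convergence of $\rho^\delta$: the density obeys a pure transport equation and you have no spatial regularity on it whatsoever---only $\rho^\delta\in L^\infty$ and $\partial_t\rho^\delta\in L^2_tH^{-1}_x$. Without strong convergence of $\rho^\delta$ you cannot identify the limits of $P(\rho^\delta)$ or $\lambda(\rho^\delta)\,\mathrm{div}\,u^\delta$, so the limit passage as you describe it breaks down. The paper instead uses a Vaigant--Kazhikhov renormalization/effective-viscous-flux argument: setting $\Psi=\overline{\rho\ln\rho}-\rho\ln\rho\ge 0$ (bars denote weak-$*$ limits), one combines the continuity equation with the identity $\mathrm{div}\,u=\nu F+P_0$, where $\nu=(2\mu+\lambda)^{-1}$ and $P_0=\nu(P-P(\tilde\rho))$, to obtain a transport equation for $\Psi$ whose source is controlled by $\int(1+|F|)\Psi\,dx$. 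The control uses either $P,\lambda\in C^2$ or the monotonicity of $P_0$---which is exactly the alternative hypothesis in \eqref{2VDD-E1.5}. Gronwall together with $\int_0^T\|F(\cdot,t)\|_{L^\infty}\,dt\le C(T)$ then forces $\Psi\equiv 0$, i.e.\ strong convergence of $\rho^\delta$. This mechanism is the essential missing ingredient in your proposal.
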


\begin{rem}
 Here, $\theta$ is a universal positive constant (we choose $\theta=\frac{1}{2}$ in this paper),
   $\varepsilon$ and $C$ depend on  $\tilde{\rho}$,
  $\bar{\rho}_1$, $\bar{\rho}$, $P$, $\lambda$, $\mu$, $q$ and
  $M$.
\end{rem}

\begin{rem}
  For example, we can choose that $P=A\rho^\gamma$ and
  $\lambda(\rho)=c\rho^\beta$ with $\gamma\geq 1$ and
  $\beta\geq2$, where $A$ and $c$ are two positive constants.
  Also, we can choose that $\lambda$ is a positive constant and $P=A\rho^\gamma$  with $\gamma\geq
  1$.
\end{rem}

\begin{rem}
Considering the non-vacuum case, i.e., $\rho_0\geq
2\underline{\rho}>0$, we can replace the condition (\ref{2VDD-E1.5})
by
   \begin{equation}
      \left\{
      \begin{array}{l}
      P\in C^1([0,\bar{\rho}]),\ \lambda\in C^2([\underline{\rho},\bar{\rho}]),\\
       \mu>0,
      \ \lambda(\rho)\geq0,\ \rho\in[0,\bar{\rho}]\\
           P(0)=0,\   P'(\tilde{\rho})>0,\\
           (\rho-\tilde{\rho})[P(\rho)-P(\tilde{\rho})]>0,
                \ \rho\neq\tilde{\rho},\ \rho\in[\underline{\rho},\bar{\rho}],\\
             P\in C^2([\underline{\rho},\bar{\rho}])
              \ \textrm{ or }\    \frac{P(\cdot)}{2\mu+\lambda(\cdot)}
              \ \textrm{ is a monotone function on}\
              [\underline{\rho},\bar{\rho}],
      \end{array}
      \right.
    \end{equation}
   where $0\leq
2\underline{\rho}<\tilde{\rho}<\bar{\rho}$. Then, we can choose
that $P=A\rho^\gamma$ and   $\lambda(\rho)=c\rho^\beta$ with
$\gamma\geq 1$ and
  $\beta\geq0$.
\end{rem}

\begin{rem}
  Using a similar argument as that in \cite{Vaigant}, one can
  obtain the uniqueness of the solution if the initial data
  satisfy $\rho_0\geq\underline{\rho}>0$, and $(\rho_0,u_0)\in C^{1+\alpha}\times C^{2+\alpha}$ or
  $(\rho_0,u_0)\in W^{1,q}\times H^{2}$, where $\alpha\in(0,1)$ and
  $q>2$.
\end{rem}

The proof of Theorem \ref{2VDD-T1.1} consists in the derivation of a
priori estimates for smooth solutions corresponding to mollified
initial data, and the application of these estimates in extracting
limiting weak solutions as the mollifying parameter goes to zero.
Specifically, in Section \ref{2VDD-S2}, we fix a smooth, local in
time solution for which $0\leq\rho\leq\overline{\rho}$ and
$A_1+A_2\leq 2(C_0+C_f)^\theta$, where $\theta\in(0,1)$,
    $$
    A_1(T)=\sup_{0<t\leq T}\sigma\int|\nabla
    u|^2dx+\int^T_0\int\sigma\rho|\dot{u}|^2dxdt
    $$
and
    $$
    A_2(T)=\sup_{0<t\leq
    T}\sigma^2\int\rho|\dot{u}|^2dx+\int^T_0\int\sigma^2|\nabla\dot{u}|^2dxdt,
    $$
then obtain the estimate $A_1+A_2\leq (C_0+C_f)^\theta$, and prove
that the density remains in a compact subset of
$[0,\overline{\rho})$. Using the classical continuation method, we
can close these estimates, which are then applied in Section
\ref{2VDD-S3} to show the solution can be obtained in the limit as
the mollifying parameter goes to zero.

Using the initial condition $u_0\in H^1$, we can obtain pointwise
bounds for $F$ in Proposition \ref{2VDD-P2.8}, which is the key
point of the a priori estimates. Because that the mass equation
can be transformed to the following form,
    \begin{equation}
      \frac{d}{dt}\Lambda(\rho(x(t),t))+P(\rho(x(t),t))-P(\tilde{\rho})=-F(x(t),t),\label{2VDD-E1.18}
    \end{equation}
where $\Lambda$ satisfies that $\Lambda(\tilde{\rho})=0$ and
$\Lambda'(\rho)=\frac{2\mu+\lambda(\rho)}{\rho}$, a curve $x(t)$
satisfies $\dot{x}(t)=u(x(t),t)$, thus pointwise bounds for the
density will therefore follow from  pointwise bounds for $F$. On the
other hand, using a similar argument as that in \cite{Vaigant} and
the estimate $\int^T_0\|F(\cdot,t)\|_{L^\infty}dt\leq C(T)$, we can
obtain the strong limit of approximate densities $\{\rho^\delta\}$,
see Section \ref{2VDD-S3}.

Then, we study the propagation of singularities in solutions
obtained in Theorem \ref{2VDD-T1.1}.  We show that each point of
$\mathbb{R}^2$ determines a unique integral curve of the velocity
field at the initial time $t=0$, and that this system of integral
curves defines a locally bi-H\"{o}lder homeomorphism of any open
subset $\Omega$ onto its image $\Omega^t$ at each time $t>0$. Using
this Lagrangean structure,  we show that if there is a vacuum domain
at the
  initial time, then the vacuum domain will exist for all time, and
  vanishes as time goes to infinity, see Theorem \ref{2VDD-T1.4}.
 Also, we show that, if
the initial density has a limit at a point from a given side of a
continuous hypersurface, then at each later time both the density
and the divergence of the velocity have limits at the transported
point from the corresponding side of the transported hypersurface,
which is also a continuous manifold. If the limits from both sides
exist, then the Rankine-Hugoniot conditions hold in a strict
pointwise sense, showing that the jump in the
$(\lambda+2\mu)\mathrm{div}u$ is proportional to the jump in the
pressure. This leads to a derivation of an explicit representation
for the strength of the jump in $\Lambda(\rho)$ in non-vacuum
domain. These results generalize and improve upon the earlier
results of
    Hoff-Santos \cite{Hoff2008} in a significant way:
the domain may contain the vacuum states.

\begin{thm}\label{2VDD-T1.2}
Assume that the conditions of Theorem \ref{2VDD-T1.1} hold.
\begin{description}
    \item[(1)] For each $t_0\geq0$ and each $x_0\in\mathbb{R}^2$, there is a unique curve
    $X(\cdot;x_0,t_0)\in
    C^1((0,\infty);\mathbb{R}^2)\cap C^{1-\frac{\alpha}{2}}([0,\infty);\mathbb{R}^2)
    $, $\alpha\in(0,1)$, satisfying
        \begin{equation}
          X(t;x_0,t_0)=x_0+\int^t_{t_0} u(X(s;x_0,t_0),s)ds.\label{2VDD-E1.26}
        \end{equation}
    \item[(2)] Denote $X(t,x_0)=X(t;x_0,0)$. For each $t>0$ and any open set $\Omega\subset\mathbb{R}^2$,
    $\Omega^t=X(t,\cdot)\Omega$ is open and the map $x_0\longmapsto
    X(t,x_0)$ is a homeomorphism of $\Omega$ onto $\Omega^t$.
    \item[(3)] For any $0\leq t_1,t_2\leq T$, the map $X(t_1,y)\rightarrow
    X(t_2,y)$ is H\"{o}lder continuous from $\mathbb{R}^2$ onto
    $\mathbb{R}^2$. Specifically, for any $y_1,y_2\in
    \mathbb{R}^2$,
        \begin{equation}
    |X(t_2,y_2)-X(t_2,y_1)|\leq \exp(1-e^{-C (1+T)})|X(t_1,y_2)-X(t_1,y_1)|^{e^{-C
    (1+T)}}.
        \end{equation}
    \item[(4)] Let $\mathcal{M}\subset\mathbb{R}^2$ be a $C^\alpha$
    $1$-manifold, where $\alpha\in[0,1)$. Then, for any $t>0$,
    $\mathcal{M}^t=X(t,\cdot)\mathcal{M}$ is a $C^\beta$
    $1$-manifold, where $\beta=\alpha e^{-C
    (1+t)}$.
\end{description}
\end{thm}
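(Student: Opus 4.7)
The plan is to reduce everything to a log-Lipschitz bound on $u(\cdot,t)$ in the spatial variable, after which Osgood uniqueness and Gronwall-type arguments handle all four parts. The key identity is
\[
\mathrm{div}\,u = \frac{F + P(\rho) - P(\tilde\rho)}{\lambda(\rho) + 2\mu}, \qquad \Delta u^i = \partial_i\mathrm{div}\,u + \partial_j w^{ji},
\]
which lets me write $\nabla u$ as a Riesz-type singular-integral transform of $F$, $P(\rho)$, and $w$. Since (\ref{2VDD-E1.22}) furnishes pointwise Hölder bounds on $F$ and $w$ over $\mathbb{R}^2 \times [\tau,T]$ and $P(\rho)\in L^\infty$ by (\ref{2VDD-E1.15}), the Calder\'on-Zygmund theory yields $\nabla u \in L^\infty([\tau,T];\mathrm{BMO})$; combining this with the $L^2$-control on $u$ coming from (\ref{2VDD-E1.17}) via a Brezis-Gallou\"et-Wainger type embedding produces
\[
|u(x,t) - u(y,t)| \leq C(\tau,T)\,|x-y|\bigl(1 + |\log|x-y||\bigr).
\]
Part~(1) is then standard: Osgood's criterion gives uniqueness of $\dot X = u(X,t)$, and existence follows from Peano applied to the Hölder-continuous field; the regularity classes $C^1((0,\infty))\cap C^{1-\alpha/2}([0,\infty))$ follow by inserting the space-time Hölder bound (\ref{2VDD-E1.15-1}) into the integral equation (\ref{2VDD-E1.26}).

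For part~(3), fix $y_1\neq y_2$ and set $g(t) = |X(t,y_2) - X(t,y_1)|$. Subtracting the two instances of (\ref{2VDD-E1.26}) and invoking the log-Lipschitz bound gives, while $g(s)<1$,
\[
\bigl|g(t_2) - g(t_1)\bigr| \leq C\int_{t_1}^{t_2} g(s)\bigl(1 - \log g(s)\bigr)\,ds.
\]
The substitution $h(s) = 1 - \log g(s)$ converts this into $|h'(s)|\leq C\,h(s)$, and Gronwall produces $h(t_2) \leq h(t_1)e^{C(1+T)}$; exponentiating recovers the stated bound with Hölder exponent $e^{-C(1+T)}$. The same argument run backward in time (justified by uniqueness) gives the reverse inequality, yielding injectivity of $X(t,\cdot)$ and continuity of the inverse, which establishes part~(2); openness of $\Omega^t$ is then automatic from the homeomorphism property.

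Part~(4) is a direct corollary of the bi-Hölder homeomorphism structure: a $C^\alpha$ $1$-manifold is locally the image of an injective parametrization $\gamma \in C^\alpha(I;\mathbb{R}^2)$, so $\mathcal{M}^t$ is locally parametrized by $X(t,\cdot)\circ\gamma$, which is the composition of a $C^\alpha$ map with a $C^{e^{-C(1+t)}}$ map, hence $C^{\alpha e^{-C(1+t)}}$; injectivity of the chart transfers from part~(2). The main obstacle throughout is the derivation of the log-Lipschitz bound, because $H^1(\mathbb{R}^2)\not\hookrightarrow L^\infty(\mathbb{R}^2)$ and one cannot appeal to Sobolev embedding on $\nabla F$ and $\nabla w$ directly; the argument must be routed through the pointwise Hölder bounds (\ref{2VDD-E1.22}), the BMO theory for Riesz transforms, and the logarithmic embedding, with careful tracking of the dependence of all constants on $\tau$ and $T$.
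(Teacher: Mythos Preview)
Your core strategy---deriving a log-Lipschitz bound on $u(\cdot,t)$ and then applying Osgood/Gronwall---matches the paper's. The main methodological difference is that the paper works throughout with the smooth approximations $u^\delta$: it builds the classical integral curves $X^\delta$ of $u^\delta$, shows they are equi-H\"older in $t$ (via $\int_0^T\|u^\delta\|_{L^\infty}\,dt\le C(T)$), extracts a limit by Ascoli--Arzel\`a, and proves uniqueness by bounding $\langle u^\delta(\cdot,t)\rangle_{LL}\le C\|u^\delta\|_{B^1_{\infty,\infty}}\le C(\|u^\delta\|_{L^2}+\|F^\delta\|_{L^\infty}+\|\rho^\delta-\tilde\rho\|_{L^\infty}+\|w^\delta\|_{L^\infty})$ (Chemin's embedding) and then invoking Fatou to pass to the limit. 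Your route through $\nabla u\in\mathrm{BMO}$ and a Brezis--Gallou\"et--Wainger inequality is an equivalent way to obtain the log-Lipschitz bound, and working directly with $u$ rather than $u^\delta$ is in principle cleaner.

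However, there is a genuine gap near $t=0$. You base the log-Lipschitz bound on the H\"older estimate (\ref{2VDD-E1.22}), which holds only on $\mathbb{R}^2\times[\tau,T]$ with a constant $C(\tau,T)$ that blows up as $\tau\to0$; this yields $\langle u(\cdot,t)\rangle_{LL}\le C(\tau,T)$ for $t\ge\tau$, not the time-integrated bound $\int_0^T\langle u(\cdot,t)\rangle_{LL}\,dt\le C(1+T)$ needed to produce the exponent $e^{-C(1+T)}$ in part~(3) and to run Osgood on all of $[0,T]$. The fix is to observe that Riesz transforms send $L^\infty\to\mathrm{BMO}$, so H\"older continuity of $F,w$ is unnecessary: $\|\nabla u(\cdot,t)\|_{\mathrm{BMO}}\le C(1+\|F(\cdot,t)\|_{L^\infty}+\|w(\cdot,t)\|_{L^\infty})$, and the right-hand side is integrable on $[0,T]$ by Proposition~\ref{2VDD-P2.8} (cf.\ the conclusion $\int_0^T\|F\|_{L^\infty}\,dt\le C(T)$ in Theorem~\ref{2VDD-T1.1}). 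A related issue is existence at $t_0=0$: Peano requires joint continuity of the vector field, but $u$ is H\"older only on $[\tau,\infty)$ and $\|u(\cdot,t)\|_{L^\infty}$ may blow up like $\sigma^{-\alpha/2}$ as $t\to0$; you would need a Carath\'eodory-type existence theorem (continuity in $x$, $L^1$ in $t$), whereas the paper avoids this entirely by passing to the limit in $X^\delta$.
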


\begin{thm}\label{2VDD-T1.3}
Assume that the conditions of Theorem \ref{2VDD-T1.1} hold. Let $V$
be a nonempty open set in $\mathbb{R}^2$. If
$\mathrm{essinf}\rho_0|_V\geq \underline{\rho}>0$, then there is a
positive number $\underline{\rho}^{-}$ such that,
    $$
    \rho(\cdot,t)|_{V^t}\geq \underline{\rho}^-,
    $$
for all $t>0$, where $V^t=X(t,\cdot)V$.
\end{thm}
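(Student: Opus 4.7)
The plan is to transport the lower bound $\rho_0|_V \geq \underline{\rho}$ along Lagrangian characteristics and reduce the question to a scalar ODE via the mass-equation reformulation (\ref{2VDD-E1.18}). By Theorem \ref{2VDD-T1.2}, for each $x_0 \in V$ the characteristic $X(\cdot, x_0) \in C^1((0,\infty);\mathbb{R}^2)$ is well defined and $V^t = X(t,\cdot)V$, so it suffices to bound $\rho(X(t,x_0), t)$ from below by a positive constant independent of $t \geq 0$ and of $x_0 \in V$.

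Set $y(t) = \Lambda(\rho(X(t,x_0), t))$, where $\Lambda(\tilde{\rho}) = 0$ and $\Lambda'(\rho) = (2\mu+\lambda(\rho))/\rho > 0$ on $(0,\bar{\rho}]$. Equation (\ref{2VDD-E1.18}) then yields the scalar ODE
\begin{equation*}
\dot{y}(t) = -\bigl[P(\rho(X(t,x_0),t)) - P(\tilde{\rho})\bigr] - F(X(t,x_0), t).
\end{equation*}
Because $\Lambda$ is strictly increasing with $\Lambda(\tilde{\rho})=0$, and $(\rho-\tilde{\rho})[P(\rho)-P(\tilde{\rho})]>0$ for $\rho \neq \tilde{\rho}$ by (\ref{2VDD-E1.5}), the pressure term has the same sign as $y$. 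After a routine regularization of the absolute value this gives the differential inequality $\frac{d}{dt}|y(t)| \leq |F(X(t,x_0),t)| \leq \|F(\cdot,t)\|_{L^\infty}$, hence
\begin{equation*}
|y(t)| \leq |\Lambda(\rho_0(x_0))| + \int_0^t \|F(\cdot, s)\|_{L^\infty}\, ds \leq K_0 + \int_0^\infty \|F(\cdot, s)\|_{L^\infty}\, ds,
\end{equation*}
where $K_0 := \max\{\Lambda(\bar{\rho}), -\Lambda(\underline{\rho})\} < \infty$ since $\underline{\rho} > 0$.

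The crux is therefore to show that $\int_0^\infty \|F(\cdot, s)\|_{L^\infty}\, ds \leq K_1$ for a constant $K_1$ depending only on the data. I would split $[0,\infty) = [0,\tau] \cup [\tau,\infty)$: on $[0,\tau]$ the local-in-time bound $\int_0^\tau \|F\|_{L^\infty}\, dt \leq C(\tau)$ of Theorem \ref{2VDD-T1.1} suffices. For the tail $t \geq \tau$ I would exploit the elliptic identity $\Delta F = \mathrm{div}(\rho\dot{u} - \rho f)$ in $\mathbb{R}^2$ and Calder\'on--Zygmund to bound $\|\nabla F\|_{L^p}$ for some $p>2$ in terms of $\|\rho\dot{u}-\rho f\|_{L^p}$, then interpolate $\|F\|_{L^\infty}$ between $\|F\|_{L^2}$ (uniformly controlled by (\ref{2VDD-E1.17})) and $\|\nabla F\|_{L^p}$ via Gagliardo--Nirenberg. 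Time integrability on $[\tau,\infty)$, where $\sigma\equiv 1$, comes from the $\sigma$-weighted $L^2$-bound on $\nabla\dot{u}$ in (\ref{2VDD-E1.15-2}) and the assumption on $f$ in (\ref{2VDD-E1.10}). Combining these, $|y(t)| \leq K_0 + K_1$ uniformly in $t$ and $x_0 \in V$, so $\rho(X(t,x_0), t) = \Lambda^{-1}(y(t)) \geq \Lambda^{-1}(-(K_0+K_1)) =: \underline{\rho}^- > 0$.

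The hardest step is the tail estimate $\int_\tau^\infty \|F\|_{L^\infty}\, ds < \infty$: in two space dimensions the small bound $\|\nabla F\|_{L^2} \lesssim (C_0+C_f)^\theta$ of (\ref{2VDD-E1.15-1}) does not embed into $L^\infty$, so one genuinely needs the $L^{p>2}$ estimates for $\nabla F$ via Calder\'on--Zygmund applied to $\rho\dot{u}-\rho f$, combined with the weighted integrability of $\dot{u}$ and $f$ to patch the neighbourhood of $t=0$ (where the $\sigma$ weights vanish) with the asymptotic regime. Once this integrability is secured the ODE argument for $y$ closes cleanly and produces a $t$-independent positive lower bound $\underline{\rho}^-$, as asserted.
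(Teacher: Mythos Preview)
Your strategy has two genuine gaps.

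First, the tail estimate $\int_\tau^\infty \|F(\cdot,s)\|_{L^\infty}\,ds<\infty$ is not available from the paper's a~priori bounds, and the sketch you give does not produce it. Proposition~\ref{2VDD-P2.8} yields only $\int_0^T\|F\|_{L^\infty}\,ds\le C(1+T)$; the interpolation you propose gives $\|F\|_{L^\infty}\lesssim \|\rho\dot u\|_{L^{2+q}}^{(2+q)/(2+2q)}$ (modulo bounded factors), and since (\ref{2VDD-E2.65}) only places $\|\rho\dot u\|_{L^{2+q}}$ in $L^{2+q}_t$, this puts $\|F\|_{L^\infty}$ in $L^{2+2q}_t$, not $L^1_t$. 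The difficulty is that your inequality $\frac{d}{dt}|y|\le |F|$ discards the pressure term $-(P(\rho)-P(\tilde\rho))$, which is precisely the restoring force that makes a time-uniform lower bound possible. The paper keeps this term and argues by a barrier (Proposition~\ref{2VDD-P2.3}): for $t\ge\tau$ one has the \emph{pointwise} smallness $\|F(\cdot,t)\|_{L^\infty}\le C(\tau,M_q)(C_0+C_f)^{q\theta/(4+4q)}$; if $\rho$ were to touch a threshold $\underline\rho_2<\tilde\rho$ from above, the ODE would give $\dot\Lambda = (P(\tilde\rho)-P(\underline\rho_2))-F>0$ since the positive pressure gap dominates the small $|F|$, contradicting downward crossing. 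This maximum-principle argument yields a $t$-independent $\underline\rho^-$ without any $L^1_t$ control of $\|F\|_{L^\infty}$.

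Second, you apply the ODE (\ref{2VDD-E1.18}) directly to the weak solution $(\rho,u)$, but $\rho$ is merely in $L^\infty$ and can be discontinuous, so $\rho(X(t,x_0),t)$ is not a~priori well defined pointwise and (\ref{2VDD-E1.18}) does not hold classically along the limiting characteristics. The paper therefore runs the barrier argument at the level of the smooth approximations $(\rho^\delta,u^\delta)$, obtaining $\rho^\delta(X^\delta(t,y),t)\ge\underline\rho^-$ for $y\in V$, and then passes to the limit at a fixed point $x\in V^t$ by combining the strong convergence $\rho^\delta\to\rho$ with Lemma~\ref{2VDD-L4.1}, which shows that $X^\delta(t,\cdot)^{-1}(x)\to X(t,\cdot)^{-1}(x)\in V$ along a subsequence. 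Your proposal bypasses this approximation-and-limit step entirely.
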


\begin{thm}\label{2VDD-T1.4}
Assume that the conditions of Theorem \ref{2VDD-T1.1} hold. Let $U$
be a nonempty open set in $\mathbb{R}^2$. Assume that $\rho_0|_U
=0$. Then,
    $$
    \rho(\cdot,t)|_{U^t}=0,
    $$
for all $t>0$, where $U^t=X(t,\cdot)U$. Furthermore, we have
    \begin{equation}
    \lim_{t\rightarrow\infty}\int|\rho-\tilde{\rho}|^4(x,t)dx=0,\label{2VDD-E1.28}
    \end{equation}
and
     \begin{equation}
    \lim_{t\rightarrow\infty}|\{x\in\mathbb{R}^2|\rho(x,t)=0\}|=0.\label{2VDD-E1.29}
    \end{equation}
\end{thm}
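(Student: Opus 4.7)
The proof splits into the three claims. Claim (1) is a Lagrangean persistence fact, claim (3) reduces to claim (2), and (2) is the substantive decay statement. For (1), I plan to work with the smooth approximate solutions of Section \ref{2VDD-S2}, for which the continuity equation yields the Lagrangean representation
$$
  \rho(X(t, x_0), t) = \rho_0(x_0) \exp\left(-\int_0^t (\mathrm{div}\,u)(X(s, x_0), s)\,ds\right).
$$
The exponential factor is bounded because $\int_0^T \|\mathrm{div}\,u\|_{L^\infty}\,dt$ is controlled by $\mathrm{div}\,u = (F + P(\rho) - P(\tilde\rho))/(2\mu + \lambda)$ together with $\int_0^T \|F\|_{L^\infty}\,dt \leq C(T)$ and $\rho \in L^\infty$. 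Hence $\rho_0(x_0) = 0$ forces $\rho(X(t, x_0), t) = 0$, and Theorem \ref{2VDD-T1.2}(2) identifies $U^t = X(t, U)$. Passage to the limit $\delta \to 0$ uses the strong convergence of $\rho^\delta$ from Section \ref{2VDD-S3} together with the bi-H\"older control on the flow in Theorem \ref{2VDD-T1.2}(3). Claim (3) then follows from (2) via Chebyshev: on $\{\rho(\cdot, t) = 0\}$ one has $|\rho - \tilde\rho|^4 = \tilde\rho^4$, so $\tilde\rho^4 |\{\rho(\cdot, t) = 0\}| \leq \int |\rho - \tilde\rho|^4\,dx \to 0$.

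For (2), since $\rho$ is uniformly bounded in $[0, \bar\rho]$, the $L^4$ decay follows by interpolation from $\int G(\rho(\cdot, t))\,dx \to 0$, where the potential $G$ defined in Theorem \ref{2VDD-T1.1} satisfies $c(\rho - \tilde\rho)^2 \leq G(\rho) \leq C$ on $[0, \bar\rho]$. The continuity equation gives $\frac{d}{dt}\int G(\rho)\,dx = -\int (P(\rho) - P(\tilde\rho))\,\mathrm{div}\,u\,dx$. Substituting $\mathrm{div}\,u = (F + P(\rho) - P(\tilde\rho))/(2\mu + \lambda)$, applying Cauchy--Schwarz to the cross term, and invoking the pointwise coercivity $(P(\rho) - P(\tilde\rho))^2 \geq c\, G(\rho)$ on $[0, \bar\rho]$ (a consequence of continuity and the quadratic vanishing of both quantities at $\tilde\rho$), I expect the dissipative differential inequality
$$
  \frac{d}{dt}\int G(\rho)\,dx + c_1 \int G(\rho)\,dx \leq C \|F(\cdot, t)\|_{L^2}^2.
$$
Duhamel's formula yields $\int G(\rho)(\cdot, t)\,dx \leq e^{-c_1 t} \int G(\rho_0)\,dx + C\int_0^t e^{-c_1(t - s)} \|F(\cdot, s)\|_{L^2}^2\,ds$, so the desired decay reduces to showing $\|F(\cdot, t)\|_{L^2} \to 0$ as $t \to \infty$.

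The main obstacle is this decay of $\|F\|_{L^2}$. The tools are the elliptic identity $\Delta F = \mathrm{div}(\rho\dot u - \rho f)$ (obtained by taking the divergence of the momentum equation and using the antisymmetry of $w^{j,k}$) together with the weighted integrabilities $\int_1^\infty \|\rho\dot u\|_{L^2}^2\,dt < \infty$ and $\int_1^\infty \|\nabla \dot u\|_{L^2}^2\,dt < \infty$ implicit in (\ref{2VDD-E1.17}). These yield sequential decay $\|\rho\dot u(\cdot, t_n)\|_{L^2} \to 0$ along some $t_n \to \infty$, which through the elliptic identity propagates to $\|\nabla F(\cdot, t_n)\|_{L^2} \to 0$. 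The sequential decay is then promoted to genuine pointwise-in-$t$ decay via a H\"older-in-$t$ continuity bound for $\|F\|_{L^2}$ extracted from the effective flux equation; combined with the spatial decay of $F$ at infinity (inherited from $u$ and $\rho - \tilde\rho$) and Gagliardo--Nirenberg, this produces $\|F(\cdot, t)\|_{L^2} \to 0$. This last step is the subtlest part of the argument and will rely heavily on the full a priori estimates established in Section \ref{2VDD-S2}.
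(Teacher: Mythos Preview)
Your plan for parts (1) and (3) is sound and matches the paper's argument in spirit; the paper uses the auxiliary function $\Lambda(\rho)$ (with $\Lambda'(\rho)=(2\mu+\lambda)/\rho$) along characteristics rather than the multiplicative Lagrangean formula, but the content is the same once one notes that the approximate initial density equals $\delta$, not $0$, on compact subsets of $U$, yielding $\rho^\delta\leq C(T)\delta$ there before passing to the limit via Lemma~\ref{2VDD-L4.1}.

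The genuine gap is in part (2). Your differential inequality
\[
  \frac{d}{dt}\int G(\rho)\,dx + c_1\int G(\rho)\,dx \leq C\|F(\cdot,t)\|_{L^2}^2
\]
is correct, but the step ``$\|F(\cdot,t)\|_{L^2}\to 0$'' is not delivered by your proposed route. Sequential decay of $\|\rho\dot u\|_{L^2}$ together with the elliptic identity gives only $\|\nabla F(\cdot,t_n)\|_{L^2}\to 0$; in $\mathbb{R}^2$ this does \emph{not} imply $\|F(\cdot,t_n)\|_{L^2}\to 0$ without independent low-frequency control, and the time-H\"older continuity of $t\mapsto\|F\|_{L^2}$ cannot upgrade sequential decay of $\|\nabla F\|_{L^2}$ to decay of $\|F\|_{L^2}$. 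Worse, any direct attempt to bound $\|F\|_{L^2}$ runs into the circularity $\|F\|_{L^2}^2\lesssim\|\nabla u\|_{L^2}^2+\|P(\rho)-P(\tilde\rho)\|_{L^2}^2\lesssim\|\nabla u\|_{L^2}^2+\int G(\rho)\,dx$, and a constant count shows the resulting coefficient on $\int G$ cannot in general be absorbed into the dissipation $c_1\int G$.

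The paper sidesteps this by working at the $L^4$ level. The Gagliardo--Nirenberg inequality $\|F\|_{L^4}^4\leq C\|F\|_{L^2}^2\|\nabla F\|_{L^2}^2$ combined with the elliptic bound $\|\nabla F\|_{L^2}\leq C(\|\rho\dot u\|_{L^2}+\|f\|_{L^2})$ gives \emph{time integrability} $\int_1^\infty\|F\|_{L^4}^4\,dt<\infty$ directly from (\ref{2VDD-E1.17}), with no circularity: $\|F\|_{L^2}$ is merely uniformly bounded while $\|\nabla F\|_{L^2}^2$ is time-integrable. The differential inequality (\ref{2VDD-E2.25}) with $p=4$ then yields $\int_1^\infty\|\rho-\tilde\rho\|_{L^4}^4\,dt<\infty$, and space--time integrability is converted to pointwise-in-time decay by the elementary device of bounding $\int|\rho-\tilde\rho|^4(\cdot,t)\,dx$ for $t\in[N,N+1]$ by its average over $s\in[N,N+1]$ plus $C\int_N^{N+1}\|F\|_{L^4}^4\,ds$, both of which vanish as $N\to\infty$. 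No pointwise decay of any norm of $F$ is ever needed.
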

Theorems \ref{2VDD-T1.2}-\ref{2VDD-T1.4} are proved in Section
\ref{2VDD-S4}.

In the following theorem, applying the Lagrangean structure of
Theorem \ref{2VDD-T1.2}, we establish a result concerning the
transport by the velocity field of pointwise continuity of the
density. Recall first that  the oscillation of $g$  at $x$ with
respect to $E$ is defined by (as in \cite{Hoff2008})
    $$
    \mathrm{osc}(g;x,E)=\lim_{R\rightarrow0}
    \left(\mathrm{esssup}g|_{E\cap B_R(x)}-\mathrm{essinf}g|_{E\cap
    B_R(x)}
    \right),
    $$
where $x\in \bar{E}$ and $g$ maps an open set $E\subset
\mathbb{R}^2$ into $\mathbb{R}$. We shall say that $g$ is
continuous at an interior point $x$ of $E$, if osc$(g;x,E)=0$.
\begin{thm}\label{2VDD-T1.5}
  Assume that the conditions of Theorem \ref{2VDD-T1.1} hold. Let
  $E\subset\mathbb{R}^2$ be open  and $x_0\in\bar{E}$. If
  $\mathrm{osc}(\rho_0;x_0,E)=0$, then
  $\mathrm{osc}(\rho(\cdot,t);X(t,x_0),X(t,\cdot)E)=0$. In
  particular, if $x_0\in E$ and $\rho_0$ is continuous at $x_0$,
  then $\rho(\cdot,t)$ is continuous at $X(t,x_0)$.
\end{thm}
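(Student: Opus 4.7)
The plan is to use the Lagrangean reformulation (\ref{2VDD-E1.18}) of the mass equation to track $\Lambda(\rho)$ along particle paths. Integrating (\ref{2VDD-E1.18}) along $X(\cdot,y)$ gives
\[
\Lambda(\rho(X(t,y),t)) = \Lambda(\rho_0(y)) - \int_0^t \bigl[F(X(s,y),s) + P(\rho(X(s,y),s)) - P(\tilde\rho)\bigr]\, ds.
\]
Since $\Lambda'(\rho)=(2\mu+\lambda(\rho))/\rho>0$ and $\Lambda(\rho)\to-\infty$ as $\rho\to 0^+$, $\Lambda$ is a strictly increasing bijection from $(0,\bar\rho]$ onto $(-\infty,\Lambda(\bar\rho)]$, so control of $\Lambda(\rho)$ transfers to control of $\rho$ (with $\Lambda(0)=-\infty$ for the vacuum limit). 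By Theorem~\ref{2VDD-T1.2}, $X(t,\cdot)$ is a bi-H\"older homeomorphism, so $\mathrm{osc}(\rho(\cdot,t);X(t,x_0),X(t,\cdot)E)=0$ reduces to showing that $\rho(X(t,y),t)$ tends to a single limit as $y\to x_0$ in $E$. Let $\rho^{\ast}\in[0,\bar\rho]$ be the common limit of $\operatorname{esssup}\rho_0$ and $\operatorname{essinf}\rho_0$ over $E\cap B_R(x_0)$ as $R\to 0^+$, which exists by hypothesis. I would split into two cases.

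\textbf{Case $\rho^{\ast}>0$.} Pick $R$ so small that $\rho_0\geq\rho^{\ast}/2$ a.e.\ on $V:=E\cap B_R(x_0)$. Theorem~\ref{2VDD-T1.3} yields $\underline\rho^{-}>0$ with $\rho(\cdot,t)|_{V^t}\geq\underline\rho^{-}$, so along any path from $V$ the density stays in $[\underline\rho^-,\bar\rho]$, on which $\Lambda$ is bi-Lipschitz and $P$ is Lipschitz in $\Lambda$ (as $dP/d\Lambda=\rho P'(\rho)/(2\mu+\lambda(\rho))$ is bounded on $[0,\bar\rho]$). For $y,z\in V$, set $\Delta(t):=\Lambda(\rho(X(t,y),t))-\Lambda(\rho(X(t,z),t))$; subtracting the integrated ODEs gives
\[
|\Delta(t)|\le|\Lambda(\rho_0(y))-\Lambda(\rho_0(z))|+\int_0^t|F(X(s,y),s)-F(X(s,z),s)|\,ds+C\int_0^t|\Delta(s)|\,ds.
\]
The first term tends to $0$ by continuity of $\Lambda$ on $[\underline\rho^-,\bar\rho]$ and the definition of $\rho^\ast$. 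For the $F$-term I would split the integral at small $\tau>0$: on $(0,\tau)$ estimate by $2\|F(\cdot,s)\|_{L^\infty}$ and choose $\tau$ small using the bound $\int_0^T\|F\|_{L^\infty}dt\le C(T)$ from Theorem~\ref{2VDD-T1.1}; on $(\tau,t)$ combine the uniform spatial H\"older estimate (\ref{2VDD-E1.22}) with the bi-H\"older flow estimate of Theorem~\ref{2VDD-T1.2}(3) to obtain a bound of the form $C(\tau,t)|y-z|^{\gamma}$ for some $\gamma>0$, which vanishes as $y,z\to x_0$. Gronwall then gives $|\Delta(t)|\to 0$, and the bi-Lipschitz property of $\Lambda$ upgrades this to $|\rho(X(t,y),t)-\rho(X(t,z),t)|\to 0$.

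\textbf{Case $\rho^{\ast}=0$.} Here $\operatorname{esssup}\rho_0|_{E\cap B_R(x_0)}\to 0$, so $\Lambda(\rho_0(y))\to-\infty$ as $y\to x_0$ in $E$. The integrand $F+P-P(\tilde\rho)$ lies in $L^1_t L^\infty_x$ on $(0,t)$, so the integrated ODE gives $\Lambda(\rho(X(t,y),t))\le \Lambda(\rho_0(y))+C(t)\to -\infty$, whence $\rho(X(t,y),t)\to 0$. By the bi-H\"older continuity of the flow, $X(t,\cdot)(E\cap B_R(x_0))$ shrinks to $X(t,x_0)$ as $R\to 0$, so both $\operatorname{esssup}\rho(\cdot,t)$ and $\operatorname{essinf}\rho(\cdot,t)$ over shrinking neighborhoods of $X(t,x_0)$ in $X(t,\cdot)E$ tend to $0$, giving the desired oscillation zero.

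\textbf{Main obstacle.} The singular behavior of $F$ near $t=0$: the H\"older estimate (\ref{2VDD-E1.22}) has its constant blowing up as $\tau\to 0^+$, so spatial H\"older continuity is not available uniformly on $[0,t]$. The two-scale splitting ($L^1_tL^\infty_x$ integrability on $(0,\tau)$, uniform H\"older continuity on $(\tau,t)$) is the essential device and mirrors the scaling weight $\sigma(t)=\min\{1,t\}$ appearing throughout the a priori estimates. The second subtle point is that $\Lambda^{-1}$ is only bi-Lipschitz away from vacuum, which forces the case split on $\rho^{\ast}$ and the use of Theorem~\ref{2VDD-T1.3} to secure a positive lower bound for the density along the transported paths in the non-vacuum case.
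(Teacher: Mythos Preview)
Your overall strategy---split on whether the limiting density is zero, and in the positive case subtract the Lagrangian ODEs and apply Gronwall---matches the paper. But there is a genuine gap at the very first step: you write
\[
\Lambda(\rho(X(t,y),t)) = \Lambda(\rho_0(y)) - \int_0^t \bigl[F(X(s,y),s) + P(\rho(X(s,y),s)) - P(\tilde\rho)\bigr]\, ds
\]
for the \emph{weak} solution $(\rho,u)$ of Theorem~\ref{2VDD-T1.1}. Equation (\ref{2VDD-E1.18}) is derived in Section~\ref{2VDD-S2} only for the smooth approximate solutions $(\rho^\delta,u^\delta)$. For the limiting solution, $\rho$ is merely $L^\infty$ with no spatial regularity, so $\rho(X(t,y),t)$ is not a priori meaningful pointwise, and the identity above is not available. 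Indeed, the one-sided version (\ref{2VDD-E5.13-1}) of this identity is established in the paper only in the proof of Theorem~\ref{2VDD-T1.6}, \emph{after} Theorem~\ref{2VDD-T1.5}, via a delicate test-function limiting argument; invoking it here would be circular.

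The paper's proof avoids this by working throughout at the $\delta$-level. One fixes Eulerian points $x_1,x_2\in (E\cap B)^t\cap B_{r_1}(x_0^t)$, pulls them back by the approximate flow to $y_j^\delta=X^\delta(t,\cdot)^{-1}(x_j)$, applies the ODE and Gronwall to $(\rho^\delta,u^\delta)$ to bound $|\Lambda(\rho^\delta(x_2,t))-\Lambda(\rho^\delta(x_1,t))|$, and only then passes $\delta\to 0$ using the pointwise a.e.\ convergence $\rho^\delta\to\rho$ at the fixed points $x_1,x_2$. This requires the auxiliary Lemmas~\ref{2VDD-L5.1}--\ref{2VDD-L5.3} controlling $X^\delta(t,\cdot)^{-1}$ and its uniform convergence to $X(t,\cdot)^{-1}$, which your argument does not mention. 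Your vacuum case has the same defect: the bound $\Lambda(\rho(X(t,y),t))\le\Lambda(\rho_0(y))+C(t)$ presupposes the ODE for the weak solution; the paper instead repeats the approximation argument of Theorem~\ref{2VDD-T1.4}.

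A secondary remark: for the $F$-difference term you propose a two-scale split at $\tau$, combining $L^1_tL^\infty_x$ on $(0,\tau)$ with the H\"older estimate (\ref{2VDD-E1.22}) on $(\tau,t)$. The paper instead uses the single observation that the spatial H\"older seminorm $g_{F^\delta}(s)=\langle F^\delta(\cdot,s)\rangle^{q/(2+q)}$ is integrable on $[0,T]$ (by the same argument as Proposition~\ref{2VDD-P2.8}), which handles the small-time singularity without a split. Your device would also work once the approximation framework is in place, but the paper's is more direct.
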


Now, let $\mathcal{M}$ be a $C^0$ $1$-manifold in $\mathbb{R}^2$ and
 $x_0\in\mathcal{M}$. Then there is a neighborhood $G$ of $x_0$
which is the disjoint union $G=(G\cap\mathcal{M})\cup E_+\cup E_-$,
where $E_{\pm}$ are open and $x_0$ is a limit point of each. If
osc$(g;x_0,E_+)=0$, then the common value $g(x_0+,t)$ is the
one-sided limit of $g$ at $x_0$ from the plus-side of $\mathcal{M}$,
and similar for the one-sided limit $g(x_0-,t)$ from the minus-side
of $\mathcal{M}$. If both of these limits exist, then the difference
$[g(x_0)]:=g(x_0+)-g(x_0-)$ is the jump in $g$ at $x_0$ with respect
to $\mathcal{M}$. (see \cite{Hoff2008})

Now, we state our main results on the propagation of singularities
in solutions.

\begin{thm}\label{2VDD-T1.6}
  Let $(\rho,u)$ as in Theorem \ref{2VDD-T1.1},
  $\mathcal{M}$ be a $C^0$ 1-manifold and $x_0\in \mathcal{M}$.

  (a) If $\rho_0$ has a one-sided limit at $x_0$ from the
  plus-side of $\mathcal{M}$, then for each $t>0$, $\rho(\cdot,t)$
  and $\mathrm{div}u(\cdot,t)$ have one-sided limits at $X(t,x_0)$
  from the plus-side of the $C^0$ 1-manifold
  $X(t,\cdot)\mathcal{M}$ corresponding to the choice
  $E_+^t=X(t,\cdot)E_+$. The map $t\mapsto \rho(X(t,x_0)+,t)$ is
  in $C^{\frac{3q}{4+4q}}([0,\infty))\cap C^1((0,\infty))$ and the map $t\mapsto \mathrm{div}u(X(t,x_0)+,t)$ is
  locally H\"{o}lder continuous on $(0,\infty)$.

  (b) If both one-sided limits $\rho_0(x_0\pm)$ of $\rho_0$ at
  $x_0$ with respect to $\mathcal{M}$ exist, then for each $t>0$,
  the jumps in $P(\rho(\cdot,t))$ and $\mathrm{div}u(\cdot,t)$ at
  $X(t,x_0)$ satisfy the Rankine-Hugonoit condition
        \begin{equation}
          [(2\mu+\lambda(\rho(X(t,x_0),t)))\mathrm{div}u(X(t,x_0),t)]
          =[P(\rho(X(t,x_0),t))].\label{2VDD-E1.31}
        \end{equation}

  (c) Furthermore, if $\rho_0(x_0\pm)\geq\underline{\rho}>0$, then the jump in
   $\Lambda(\rho)$ satisfies the representation
    \begin{equation}
      [\Lambda(\rho(X(t,x_0),t))]=\exp\left(
      -\int^t_0a(\tau,x_0)d\tau
      \right)[\Lambda(\rho_0(x_0))]
    \end{equation}
  where
  $a(t,x_0)=\frac{[P(\rho(X(t,x_0),t))]}{[\Lambda(\rho(X(t,x_0),t))]}$.
\end{thm}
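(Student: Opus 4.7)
The plan is to propagate the pointwise behavior of $\rho_0$ along the Lagrangian trajectories of Theorem \ref{2VDD-T1.2} using the transport identity (\ref{2VDD-E1.18}) and the joint H\"older regularity of $F$ from (\ref{2VDD-E1.22}). For (a), fix $y \in E_+$ close to $x_0$. Along $X(\cdot,y)$, the scalar $h_y(t):=\Lambda(\rho(X(t,y),t))$ solves
\begin{equation*}
h_y'(t)=P(\tilde\rho)-P(\Lambda^{-1}(h_y(t)))-F(X(t,y),t),\qquad h_y(0)=\Lambda(\rho_0(y)).
\end{equation*}
Since $\Lambda'(\rho)=(2\mu+\lambda(\rho))/\rho$ is bounded and bounded away from zero on the physical range of $\rho$, the composition $P\circ\Lambda^{-1}$ is Lipschitz, so this ODE is well-posed and stable with respect to both its initial datum and its inhomogeneity. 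The homeomorphism property of Theorem \ref{2VDD-T1.2}(3) together with the joint H\"older estimate (\ref{2VDD-E1.22}) makes both depend continuously on $y$ as $y\to x_0$ through $E_+$; a Gronwall argument then produces a unique limit $h_{x_0+}(t)$, and we set $\rho(X(t,x_0)+,t):=\Lambda^{-1}(h_{x_0+}(t))$. The one-sided limit of $\mathrm{div}\,u$ follows at once from $\mathrm{div}\,u=(F+P(\rho)-P(\tilde\rho))/(2\mu+\lambda(\rho))$. The $C^1((0,\infty))$ property is immediate from the ODE with continuous right-hand side; the global H\"older exponent $\tfrac{3q}{4+4q}$ on $[0,\infty)$ is obtained by composing the H\"older-in-time bound (\ref{2VDD-E1.22}) on $F$ at $\alpha'=q/(2+q)$ with the trajectory regularity from Theorem \ref{2VDD-T1.2}(1).

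For (b), applying (a) on both $E_+$ and $E_-$ furnishes one-sided limits of $\rho$ and $\mathrm{div}\,u$ at $X(t,x_0)$. Since $F$ is H\"older continuous on all of $\mathbb{R}^2\times[\tau,T]$ by (\ref{2VDD-E1.22}), its jump across the transported manifold vanishes, $[F(X(t,x_0),t)]=0$; substituting into $F=(2\mu+\lambda(\rho))\,\mathrm{div}\,u-P(\rho)+P(\tilde\rho)$ and taking the jump gives (\ref{2VDD-E1.31}). For (c), the lower bound $\rho_0(x_0\pm)\geq\underline\rho>0$ together with Theorem \ref{2VDD-T1.3} applied to small one-sided neighborhoods of $x_0$ secures a uniform positive lower bound $\underline\rho^-$ on $\rho(X(t,x_0)\pm,t)$. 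Writing (\ref{2VDD-E1.18}) along $X(\cdot,x_0)$ on both sides and subtracting (using $[F]=0$) yields
\begin{equation*}
\frac{d}{dt}[\Lambda(\rho(X(t,x_0),t))]=-[P(\rho(X(t,x_0),t))]=-a(t,x_0)\,[\Lambda(\rho(X(t,x_0),t))],
\end{equation*}
whose integration is the stated exponential representation. The coefficient $a$ is bounded: where $[\Lambda]\neq 0$, the mean value theorem gives $a=P'(\xi)/\Lambda'(\xi)$ for some $\xi$ in the physical range; where $[\Lambda]=0$ we also have $[P]=0$ by strict monotonicity of $\Lambda$, and both sides of the ODE vanish consistently, so the formula extends across such points.

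The main obstacle is part (a): upgrading the convergence of the trajectory-valued ODE solutions $h_y$ as $y\to x_0$ through $E_+$ to a genuine one-sided pointwise limit of $\rho(\cdot,t)$ at $X(t,x_0)$ with respect to $E_+^t$ in the essential-oscillation sense. This rests on the bi-H\"older homeomorphism of Theorem \ref{2VDD-T1.2}(3), the joint H\"older regularity of $F$ from (\ref{2VDD-E1.22}), and integrability of $F$ down to $t=0$ to handle the ODE near the initial time. A secondary delicate point is matching the precise global-in-time H\"older exponent $\tfrac{3q}{4+4q}$, which requires careful bookkeeping of how the spatial and temporal H\"older norms of $F$ compose with the trajectory regularity.
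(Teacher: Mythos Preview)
Parts (b) and (c) of your outline are correct and match the paper's argument closely. The issues are all in part (a).

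First, the transport identity (\ref{2VDD-E1.18}) is a pointwise statement valid for the \emph{smooth approximations} $(\rho^\delta,u^\delta)$, not for the limiting weak solution. Since $\rho$ is only in $L^\infty$ and is not continuous, the quantity $\rho(X(t,y),t)$ is only defined almost everywhere, and the ODE for $h_y$ is not a priori meaningful for the weak solution. The paper avoids this by first invoking Theorem \ref{2VDD-T1.5} (proved via the approximations) to get existence of the one-sided limit, and then, to derive the integral identity for $\Lambda(\rho(X(t,x_0)+,t))$, it works at the level of $(\rho^\delta,u^\delta)$ with carefully constructed transported test functions $\phi^{\delta,h}$ supported in $X^\delta(t,\cdot)B_{r_h}(x_h)\subset E_+^t$, and passes to the limit $\delta\to0$ then $h\to0$. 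Your direct ODE-for-the-weak-solution argument skips this step.

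Second, you do not address the case $\rho_0(x_0+)=0$. Here $\Lambda'(\rho)=(2\mu+\lambda(\rho))/\rho$ is \emph{not} bounded (contrary to what you assert), $\Lambda(\rho)\to-\infty$ as $\rho\to0$, and the ODE framework collapses. The paper treats this case separately, using the argument of Theorem \ref{2VDD-T1.4} to show $\rho(X(t,x_0)+,t)\equiv0$.

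Third, your derivation of the exponent $\tfrac{3q}{4+4q}$ is incorrect. Estimate (\ref{2VDD-E1.22}) holds only on $[\tau,T]$ with $\tau>0$ and gives no control near $t=0$; moreover the temporal H\"older exponent it provides is $\tfrac{\alpha'}{2+2\alpha'}=\tfrac{q}{4+4q}$, not $\tfrac{3q}{4+4q}$. The correct route is the small-time integrability of $\|F(\cdot,s)\|_{L^\infty}$: from (\ref{2VDD-E2.72-1}) combined with (\ref{2VDD-E2.59}) and (\ref{2VDD-E2.66}) one has $\|F(\cdot,s)\|_{L^\infty}\leq C\sigma(s)^{-(4+q)/(4+4q)}$, whence $\int_0^t\|F(\cdot,s)\|_{L^\infty}\,ds\leq Ct^{3q/(4+4q)}$, and the H\"older modulus of $t\mapsto\Lambda(\rho(X(t,x_0)+,t))$ follows from the integral identity.
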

Theorems \ref{2VDD-T1.5}-\ref{2VDD-T1.6} are proved in Section
\ref{2VDD-S5}.
\begin{rem}
From Theorems \ref{2VDD-T1.2}-\ref{2VDD-T1.6}, we have that if
$[\rho_0]$ is nonzero at $x_0$, then $[\rho(\cdot,t)]$ is nonzero
at $X(t;x_0,0)$ for every $t>0$. That is, singularities of this
type persist for all time. On the other hand,  if $P'(\rho)>0$
with $\underline{\rho}^-\leq \rho\leq \overline{\rho}$, then $a$
is strictly positive and the jump in $\Lambda(\rho)$ in the
non-vacuum domain decays exponentially in time. Thus, the jumps in
$\rho$, in $\frac{P(\rho)}{\lambda(\rho)+2\mu}$ and in
$\mathrm{div} u$  in the non-vacuum domain decay exponentially in
time as well.
\end{rem}

At last, we will show that the condition of $\mu=$constant will
induce
   a singularity of the system at vacuum in the following
   two aspects: 1) considering the special case where two fluid regions initially
separated by a vacuum region, we show that the solution we obtained
is a nonphysical weak solution in which separate
   kinetic energies of the two fluids need not to be conserved; 2) we show the blow-up of
    smooth solutions for the spherically
symmetric system when the initial density is compactly supported.
Thus, the viscosity coefficient $\mu$ plays a key role in the
Navier-Stokes equations.

If we consider the special case where two fluid regions initially
separated by a vacuum region, Theorem \ref{2VDD-T1.4} shows that the
solution obtained in
   Theorem \ref{2VDD-T1.1} is a nonphysical weak solution in which
   the two fluids cannot collide independent of their initial
   velocities. In the following, we will show that the separate
   kinetic energies of the two fluids needn't  to be conserved.

If the initial data are spherically symmetric, i.e,
    \begin{equation}
    \rho_0(x)=\varrho_0(r),\ u_0(x)=v_0(r)\frac{x}{r},\ r=|x|,\label{2VDD-E1.28-1}
    \end{equation}
and the external force $f \equiv0$, from Theorem \ref{2VDD-T1.1},
one can prove that the system (\ref{2VDD-E1.1})--(\ref{2VDD-E1.2})
has a spherically symmetric solution $(\rho,u)$ satisfying
    \begin{equation}
    \rho(x,t)=\varrho(r,t),\ u(x,t)=v(r,t)\frac{x}{r},\
    r=|x|.\label{2VDD-E1.29-1}
    \end{equation}
Then $(\varrho,v)$ is a solution of the following system
    \begin{equation}
      \left\{\begin{array}{l}
      \partial_t \varrho+\partial_r(\varrho v)+\frac{1}{r}\varrho v=0,\\
           \varrho(\partial_t v+v\partial_rv)
      +\partial_r P(\varrho)=\partial_r\{(\lambda(\varrho)+2\mu)(\partial_r v+\frac{v}{r})\},
     \end{array}
      \right.\label{2VDD-E1.30-1}
    \end{equation}
         \begin{equation}
      v(r,t)\rightarrow0,
      \ \varrho(r,t)\rightarrow \tilde{\rho}>0, \ \textrm{ as
      }r\rightarrow\infty, \ t>0,
    \end{equation}
    \begin{equation}
      (\varrho,v)|_{t=0}=(\varrho_0,v_0).\label{2VDD-E1.30}
    \end{equation}
Furthermore, assume that there are two positive constant $0<a<b$,
such that
    $$
    \varrho_0(r)=0,\ r\in(a,b),
    $$
and
    $$
        \varrho_0(r)\geq \underline{\varrho}>0,\ r\in (0,a)\cup(b,+\infty).
    $$
Then, from  Theorems \ref{2VDD-T1.2}-\ref{2VDD-T1.4}, we have
there are two curves $a(t)$ and $b(t)$ satisfying
    $$
    0<a(t)<b(t)<\infty,
    $$
    $$
        a(0)=a,\ a'(t)=v(a(t),t),
        \ b(0)=b,\ b'(t)=v(b(t),t),
    $$
    $$
    \varrho(r,t)=0,\ r\in(a(t),b(t)),
    $$
and
    $$
        \varrho(r,t)\geq \underline{\varrho}^->0,\ r\in (0,a(t))\cup
        (b(t),+\infty),
    $$
for some positive constant $\underline{\varrho}^-$. Using a similar
argument as that in \cite{hoff}, we can obtain the following
theorem.

\begin{thm}\label{2VDD-T1.7}
  Assume that the conditions of Theorem \ref{2VDD-T1.1} hold, $\int^1_0 s^{-2}P(s)ds<\infty$, $f=0$,  and
  the initial data satisfy (\ref{2VDD-E1.28-1}), then we have
        \begin{equation}
         \frac{d}{dt}E(t)=2(\lambda(0)+2\mu)a(t)v(a(t),t)
    \frac{a(t)v(a(t),t)-b(t)v(b(t),t)}{a^2(t)-b^2(t)}   ,
          \ t\geq0,\label{2VDD-E1.33}
        \end{equation}
  where
    $$
    E(t)=\int^{a(t)}_{0}\left(\frac{1}{2}\varrho v^2+\overline{G}(\varrho)
    \right)rdr+\int^t_0\int_0^{a(s)}(\lambda+2\mu)(v_r+\frac{v}{r})^2rdrds,
    $$
  and
    $$
      \overline{G}(\rho)=\rho\int^{\rho}_{0}\frac{P(s)}{s^2}ds.
    $$
\end{thm}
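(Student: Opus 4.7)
\textbf{Proof plan for Theorem \ref{2VDD-T1.7}.} The plan is to derive a pointwise energy identity on the inner fluid region $0<r<a(t)$, integrate, and then close the resulting boundary contribution using the vacuum region and the Rankine--Hugoniot condition supplied by Theorem \ref{2VDD-T1.6}(b).

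First, I would combine the radial continuity equation $(r\varrho)_t+(r\varrho v)_r=0$ with the momentum equation (multiplied by $vr$) to obtain the conservative local energy identity
\[
\partial_t\!\left[\left(\tfrac12\varrho v^2+\overline G(\varrho)\right)r\right]+\partial_r\!\left[vr\left(\tfrac12\varrho v^2+\overline G(\varrho)+P(\varrho)-(\lambda+2\mu)\left(v_r+\tfrac{v}{r}\right)\right)\right]=-r(\lambda+2\mu)\left(v_r+\tfrac{v}{r}\right)^2,
\]
valid in the smooth region $\{0<r<a(t)\}$. The identity $\overline G(\varrho)=\varrho\overline G'(\varrho)-P(\varrho)$ is used precisely to absorb the pressure work into the potential energy flux; the hypothesis $\int_0^1 s^{-2}P(s)ds<\infty$ guarantees that $\overline G(\varrho)$ is integrable near vacuum.

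Next, I integrate this identity over $r\in(0,a(t))$ and apply the Leibniz rule, using $a'(t)=v(a(t),t)$. The transport boundary contribution at $r=a(t)-$ from Leibniz exactly cancels the kinetic and potential energy pieces of the flux, and the remaining boundary term collapses to
\[
v(a(t),t)\,a(t)\bigl[(\lambda(\varrho)+2\mu)(v_r+\tfrac{v}{r})-P(\varrho)\bigr]\big|_{r=a(t)-}.
\]
Adding $dD/dt=\int_0^{a(t)}r(\lambda+2\mu)(v_r+v/r)^2dr$ absorbs the viscous dissipation, yielding
\[
\frac{d}{dt}E(t)=v(a(t),t)\,a(t)\bigl[(\lambda(\varrho)+2\mu)(v_r+\tfrac{v}{r})-P(\varrho)\bigr]\big|_{r=a(t)-}.
\]
Now I invoke the Rankine--Hugoniot identity (\ref{2VDD-E1.31}) at the free boundary $r=a(t)$, across which $\varrho$ jumps from a positive value on the inner side to $0$ on the vacuum side. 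Since $P(0)=0$, this gives
\[
\bigl[(\lambda(\varrho)+2\mu)(v_r+\tfrac{v}{r})-P(\varrho)\bigr]\big|_{r=a(t)-}=(\lambda(0)+2\mu)(v_r+\tfrac{v}{r})\big|_{r=a(t)+}=:F_0(t).
\]

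Finally, I compute $F_0(t)$ from the vacuum region. There $\varrho\equiv 0$ and $P(\varrho)\equiv 0$, so the momentum equation reduces to $\partial_r[(\lambda(0)+2\mu)(v_r+v/r)]=0$; hence $(\lambda(0)+2\mu)(v_r+v/r)\equiv F_0(t)$ is constant in $r$ on $(a(t),b(t))$. Rewriting this as $(rv)_r=rF_0(t)/(\lambda(0)+2\mu)$ and integrating from $a(t)$ to $b(t)$ yields
\[
b(t)v(b(t),t)-a(t)v(a(t),t)=\frac{F_0(t)(b(t)^2-a(t)^2)}{2(\lambda(0)+2\mu)},
\]
from which $F_0(t)$ is solved and substituted above to produce exactly (\ref{2VDD-E1.33}).

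\textbf{Main obstacle.} The delicate point is the rigorous manipulation at the free boundary $r=a(t)$, where $\varrho$ and $v_r+v/r$ are discontinuous; the pointwise identity is only valid on either side. To justify the boundary term computation I would argue by approximation, integrating first on $(0,a(t)-\eta)$ and then letting $\eta\downarrow0$, using the one-sided limits guaranteed by Theorem \ref{2VDD-T1.6}(a) and the Rankine--Hugoniot condition from Theorem \ref{2VDD-T1.6}(b) to pass to the limit. The regularity needed, namely existence of one-sided limits of $\varrho(\cdot,t)$ and $\mathrm{div}\,u(\cdot,t)=v_r+v/r$ at $r=a(t)$ and $r=b(t)$, as well as the $C^1$ character of the curves $a(t),b(t)$, is exactly what has been established in Theorems \ref{2VDD-T1.2}--\ref{2VDD-T1.6}.
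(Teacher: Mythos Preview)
Your proposal is correct and follows essentially the same route as the paper: derive the energy identity on $(0,a(t))$ to obtain the boundary term $v(a)a\bigl[(\lambda+2\mu)(v_r+v/r)-P\bigr]\big|_{a(t)-}$, transfer it to the vacuum side via the Rankine--Hugoniot relation (\ref{2VDD-E1.31}), and then evaluate $(v_r+v/r)(a(t)+,t)$ from the fact that the momentum equation forces $v_r+v/r$ to be constant in $r$ across the vacuum. The only cosmetic difference is that the paper writes the vacuum velocity explicitly as $v=\alpha(t)r+\beta(t)/r$ and reads off $v_r+v/r=2\alpha(t)$, whereas you integrate $(rv)_r=rc$ directly; the two computations are equivalent.
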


\begin{rem}
If the
 viscosity coefficient $\mu$ is a function of the density and
 $\lambda(0)=\mu(0)=0$, the equality (\ref{2VDD-E1.33}) implies the separate
   kinetic energies of the two fluids are conserved. Thus,
   the main reason for the appearance of non-physical solutions  comes from the
 viscosity coefficient $\mu$ being independent of the density.
\end{rem}

\begin{rem}
  The physical solution of the
  (\ref{2VDD-E1.30-1})--(\ref{2VDD-E1.30}) may be  obtained  by
  constructing separately the solutions for each of the fluids
  $\varrho_0|_{[0,a]}$ and $\varrho_0|_{[b,\infty)}$ with the boundary
  conditions
    \begin{equation}
    (\lambda(\varrho)+2\mu)(v_r+\frac{v}{r})=P(\varrho),
    \ r=a(t),\    b(t).\label{2VDD-E1.34}
    \end{equation}
  When $a(t)<b(t)$, one can obtain the composite solution
  $(\varrho,v)$. Observe that the kinetic energies are separately
  conserved because of the boundary conditions (\ref{2VDD-E1.34}).
  When $v_0$ is large and positive on $[0,a]$, large and
  negative on $[b,\infty)$, a collision $a(t)=b(t)$ may  occur in finite time.
\end{rem}

Finally, in Section \ref{2VDD-S7}, we will give a non-global
existence theorem on smooth solutions for the spherically
symmetric system when the initial density is of compact support.
The corresponding theorem on compressible Navier-Stokes equations
with constant viscosity and heat conductivity coefficients was
obtained in \cite{xin}. Here we generalize the above theorem to
the case when the second viscosity coefficient depends on the
density for the isentropic gas flow.

\begin{thm}\label{2VDD-T1.8}
  Suppose that $(\rho,u)\in C^1([0,T];H^k)$, $k>3$ is a spherically
  symmetric solution to the Cauchy problem (\ref{2VDD-E1.1}) and
  (\ref{2VDD-E1.2}) with $f=0$. Assume that $P(\rho)=A\rho^\gamma$ and $\lambda(\rho)=c\rho^\beta$ with
  $1< \beta\leq \gamma$ and $A,c>0$.
  If the support of the initial density
  $\rho_0$ is compact and $\rho_0\not\equiv0$, then $T$ must be
  finite.
\end{thm}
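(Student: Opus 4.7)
The plan is to argue by contradiction, in the spirit of Xin \cite{xin}: assume the smooth solution persists for all $t \geq 0$ and derive a contradiction from the compactness of $\mathrm{supp}\,\rho_0$. The two main ingredients are a rigidity result showing that $u$ vanishes outside $\mathrm{supp}\,\rho$ so that the support is contained in a fixed ball for all time, and a modified virial-type functional whose time derivative is bounded below by a strictly positive constant while the functional itself is uniformly bounded. The delicate step is the rigidity, which is a genuinely two-dimensional phenomenon hinging on the radial $L^2$-behaviour at infinity.

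\textbf{Step 1 (support is trapped in a fixed ball).} For $C^1$ solutions, vacuum is transported along particle paths, so $\mathrm{supp}\,\rho(\cdot, t)$ lies in the image of $\mathrm{supp}\,\rho_0$ under the flow $X(t, \cdot)$ (cf.\ Theorems \ref{2VDD-T1.2}, \ref{2VDD-T1.4}). Let $R_0$ be the radius of the smallest ball containing $\mathrm{supp}\,\rho_0$, and let $R(t)$ be the position at time $t$ of the outermost Lagrangian particle. For $r > R(t)$ one has $\rho = P(\rho) = \lambda(\rho) = 0$, so the radial momentum equation (\ref{2VDD-E1.30-1}) reduces to $\partial_r[2\mu(\partial_r v + v/r)] = 0$, whose general solution is $v(r,t) = C_1(t)\,r + C_2(t)/r$. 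Since $u(\cdot, t) \in H^k(\mathbb{R}^2) \subset L^2(\mathbb{R}^2)$ and neither $C_1 r$ nor $C_2/r$ lies in $L^2(r\,dr)$ at infinity, one concludes $C_1 = C_2 = 0$, i.e.\ $v \equiv 0$ on $r > R(t)$. By continuity $v(R(t),t) = 0$, hence $R'(t) = 0$ and $R(t) \equiv R_0$. Therefore $\mathrm{supp}\,\rho(\cdot, t) \subseteq \overline{B(0, R_0)}$ and $u$ vanishes outside this fixed ball for all $t \geq 0$.

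\textbf{Step 2 (virial identity).} Set $\Phi(t) = \int x \cdot u\,\rho\,dx$ and $K(t) = \Phi(t) - \frac{2c}{\beta - 1}\int \rho^\beta\,dx$. Dotting the momentum equation in (\ref{2VDD-E1.1}) with $x$ and integrating over $\mathbb{R}^2$ (all boundary terms at infinity vanish by Step 1, and $\int \mathrm{div}\,u\,dx = 0$ by the divergence theorem), one obtains
\begin{equation*}
\Phi'(t) = \int \rho|u|^2\,dx + 2\int P(\rho)\,dx - 2c\int \rho^\beta\,\mathrm{div}\,u\,dx.
\end{equation*}
Multiplying the mass equation by $\beta\rho^{\beta - 1}$ and integrating yields, using $\beta > 1$,
\begin{equation*}
\int \rho^\beta\,\mathrm{div}\,u\,dx = -\frac{1}{\beta - 1}\frac{d}{dt}\int \rho^\beta\,dx,
\end{equation*}
from which $K'(t) = \int \rho|u|^2\,dx + 2A\int \rho^\gamma\,dx \geq 2A\int \rho^\gamma\,dx$.

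\textbf{Step 3 (bounds and contradiction).} Mass conservation $\int \rho\,dx \equiv M > 0$, combined with $\mathrm{supp}\,\rho(\cdot, t) \subseteq B(0, R_0)$ and Hölder's inequality, gives the uniform lower bound
\begin{equation*}
\int \rho^\gamma\,dx \geq \frac{M^\gamma}{(\pi R_0^2)^{\gamma - 1}} =: \delta_0 > 0,
\end{equation*}
so $K(t) \geq K(0) + 2A\delta_0\,t \to +\infty$. On the other hand, the standard energy identity (with $f = 0$) provides uniform bounds $\int \rho|u|^2\,dx + \int \rho^\gamma\,dx \leq C$; then Cauchy--Schwarz with $|x| \leq R_0$ on $\mathrm{supp}\,\rho$ gives $|\Phi(t)| \leq R_0\sqrt{M}\bigl(\int \rho|u|^2\,dx\bigr)^{1/2} \leq C$, while Hölder with $\beta \leq \gamma$ and the fixed support gives $\int \rho^\beta\,dx \leq C$. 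Hence $|K(t)|$ is uniformly bounded, contradicting $K(t) \to +\infty$, and forcing $T < +\infty$. The main obstacle is Step 1: without the two-dimensional $L^2$-obstruction ruling out $C_1(t)r$ and $C_2(t)/r$, the support could expand and the uniform lower bound $\delta_0$ would be lost.
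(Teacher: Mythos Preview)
Your proof is correct, and Step~1 coincides with the paper's rigidity argument. The analytic core, however, is genuinely different. The paper follows Xin's original scheme: it introduces the weighted functional
\[
H(t)=\int \rho\,|x-(1+t)u|^2\,dx+\frac{2(1+t)^2}{\gamma-1}\int A\rho^\gamma\,dx,
\]
derives a differential inequality for $H$ (handling the density-dependent $\lambda$-term by Cauchy--Schwarz and Young), integrates it via Gronwall, and concludes that $\int\rho^\gamma\,dx$ must decay to zero, contradicting the Jensen lower bound from mass conservation on the fixed support. Your approach is more direct: by recognising that $\int\lambda(\rho)\,\mathrm{div}\,u\,dx$ is (up to a constant) the time derivative of $\int\rho^\beta\,dx$, you absorb the troublesome viscosity term exactly into the functional $K(t)=\int\rho\,x\cdot u\,dx-\tfrac{2c}{\beta-1}\int\rho^\beta\,dx$, obtaining the clean identity $K'=\int\rho|u|^2+2A\int\rho^\gamma\ge 2A\delta_0>0$, and then close with the energy bound on $K$. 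This avoids the Gronwall step and the splitting estimates of the paper entirely; the price is that you use $\beta>1$ explicitly in the $\tfrac{1}{\beta-1}$ factor, which is exactly the hypothesis given. Both routes rely equally on Step~1 for the fixed support (needed for the uniform lower bound $\delta_0$ and for the compactly supported integrations by parts), so your closing remark about that being the main obstacle is apt.
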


We now briefly review some previous works about the Navier-Stokes
equations with density-dependent viscosity coefficients. For the
free boundary problem of one-dimensional or spherically symmetric
isentropic fluids,  there are many works, please see
\cite{fang00,jiang,liu,yang3,zhang1}  and the references cited
therein. Under a special condition between $\mu$ and $\lambda$,
$\lambda=2\rho\mu'-2\mu$, there are some existence results of global
weak solutions for the system with the Korteweg stress tensor or the
additional quadratic friction term, see \cite{bresch1,bresch2}. H.
L. Li, J. Li and Z. P. Xin \cite{Li2008} showed a very interesting
result that for any global entropy weak solution of the
one-dimensional system, any vacuum state must vanish within finite
time. Also see Lions \cite{Lions} for multidimensional isentropic
fluids.

  We should mention that the methods
    introduced by   Hoff-Santos in \cite{Hoff2008},  Hoff in
    \cite{Hoff1995} and Vaigant-Kazhikhov in \cite{Vaigant}
   will play a crucial role
in our proof here.

\section{A priori estimates}\label{2VDD-S2}
In this section, we derive some a priori estimates for local smooth
solutions of the system (\ref{2VDD-E1.1})--(\ref{2VDD-E1.2}) with
strictly positive densities. Thus, we fix a smooth solution
$(\rho,u)$ of (\ref{2VDD-E1.1})--(\ref{2VDD-E1.2}) on
$\mathbb{R}^2\times[0,T]$ for some time $T>0$, with smooth initial
data $(\rho_0,u_0)$ and smooth external force $f$, satisfying
        \begin{equation}
      0\leq  \rho\leq\bar{\rho}\label{2VDD-E2.1-1}
        \end{equation}
and
    \begin{equation}
      A_1+A_2\leq 2(C_0+C_f)^\theta.\label{2VDD-E2.1-2}
    \end{equation}
In this paper, we choose $\theta=\frac{1}{2}$ and assume that
$\varepsilon\leq 1$.

Before proceeding, we remark that a careful application of the
standard Rankine-Hugoniot condition to (\ref{2VDD-E1.1}) shows
that discontinuities in $\rho$, $P(\rho)$ and $\nabla u$ across
hypersurfaces can be expected to persist for all time, but that
the functions $F$ and $w$ should be relatively smooth in positive
time reflecting a cancellation of singularities (for example, see
\cite{Hoff2008,Hoff2005,Hoff1995,Hoff1995-2}). We can rewrite the
momentum equation in the form,
  \begin{equation}
    \rho\dot{u}^j=\partial_j F+\mu \partial_k w^{j,k}+\rho
    f^j.\label{2VDD-E1.19}
    \end{equation}
Thus $L^2$ estimates for $\rho\dot{u}$, immediately imply $L^2$
hounds for $\nabla F$ and $\nabla w$. Stated differently, the
decomposition (\ref{2VDD-E1.19}) implies that
        \begin{equation}
          \Delta F=\mathrm{div}(\rho\dot{u}-\rho f).\label{2VDD-E1.20}
        \end{equation}
These two relations (\ref{2VDD-E1.19})--(\ref{2VDD-E1.20}) will play
the important role in this section.
\begin{prop}\label{2VDD-P2.1}
  There is a positive constant $C=C(\bar{\rho})$ such that, if
  $(\rho,u)$ is a smooth solution of (\ref{2VDD-E1.1})--(\ref{2VDD-E1.2})
  satisfying (\ref{2VDD-E2.1-1})--(\ref{2VDD-E2.1-2}), then
        \begin{equation}
          \sup_{0\leq t\leq T}\int\left[
          \frac{1}{2}\rho|u|^2+G(\rho)
          \right]dx+\int^T_0\int|\nabla u|^2dxdt\leq
          C(C_0+C_f).\label{2VDD-E2.1}
        \end{equation}
\end{prop}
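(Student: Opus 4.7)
\emph{Proof proposal.} The plan is to carry out the classical energy estimate: test the momentum equation in (\ref{2VDD-E1.1}) with $u$, integrate over $\mathbb{R}^2$, use the continuity equation to identify the pressure contribution with the time derivative of a potential energy, and absorb the external force term via Cauchy--Schwarz and a Gronwall-type step. After dotting the momentum equation with $u$, the transport terms reassemble through the continuity equation as $\frac{d}{dt}\int\tfrac12\rho|u|^2\,dx$, and integration by parts converts the two viscous terms into $-\int[\mu|\nabla u|^2+(\mu+\lambda(\rho))(\mathrm{div}\,u)^2]\,dx$, which is bounded above by $-\mu\int|\nabla u|^2\,dx$ since $\lambda(\rho)\geq 0$. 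The external force contribution is controlled by $\bigl|\int\rho f\cdot u\,dx\bigr|\leq \sqrt{\bar\rho}\,\|\sqrt{\rho}\,u\|_{L^2}\|f\|_{L^2}$.

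The one genuinely algebraic point concerns the pressure. Since $u$ vanishes at infinity one has $\int\mathrm{div}\,u\,dx=0$, so the pressure integral equals $-\int(P(\rho)-P(\tilde\rho))\mathrm{div}\,u\,dx$. Differentiating the definition of $G$ gives the identity $\rho G'(\rho)-G(\rho)=P(\rho)-P(\tilde\rho)$; combined with (\ref{2VDD-E1.1})$_1$ this produces the transport law
\[
\partial_t G(\rho)+\mathrm{div}(G(\rho)u)+(P(\rho)-P(\tilde\rho))\mathrm{div}\,u=0,
\]
so integrating in space identifies the pressure contribution with $-\frac{d}{dt}\int G(\rho)\,dx$. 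The sign condition $(\rho-\tilde\rho)[P(\rho)-P(\tilde\rho)]>0$ in (\ref{2VDD-E1.5}) ensures $G(\rho)\geq 0$.

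Setting $\mathcal{E}(t)=\int[\tfrac12\rho|u|^2+G(\rho)]\,dx$, the preceding steps assemble into
\[
\frac{d\mathcal{E}}{dt}+\mu\int|\nabla u|^2\,dx\leq C(\bar\rho)\sqrt{\mathcal{E}(t)}\,\|f(\cdot,t)\|_{L^2}.
\]
Dropping the dissipation and dividing by $\sqrt{\mathcal{E}+\eta}$ turns the right-hand side linear in $\|f\|_{L^2}$; integrating and letting $\eta\downarrow 0$ yields $\sqrt{\mathcal{E}(t)}\leq\sqrt{C_0}+C\int_0^t\|f(\cdot,s)\|_{L^2}\,ds\leq\sqrt{C_0}+CC_f$, and hence $\sup_{[0,T]}\mathcal{E}\leq C(C_0+C_f)$ after using $C_f\leq\varepsilon\leq 1$. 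Returning to the differential inequality and integrating once more, with Cauchy--Schwarz applied to $\int_0^T\sqrt{\mathcal{E}}\,\|f\|_{L^2}\,dt$, bounds $\mu\int_0^T\int|\nabla u|^2\,dx\,dt$ by the same quantity, which is (\ref{2VDD-E2.1}). No serious obstacle arises here; in particular, the induction hypothesis (\ref{2VDD-E2.1-2}) plays no role, only the uniform density bound $\rho\leq\bar\rho$. The two care points are the algebraic identity relating $G$ to $P-P(\tilde\rho)$, and the square-root trick needed to convert the $\sqrt{\mathcal{E}}$-nonlinearity on the right into a linear dependence on $C_0+C_f$.
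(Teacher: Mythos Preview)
Your proposal is correct and follows exactly the approach the paper has in mind: the paper's own proof reads in its entirety ``Using the energy estimate, we can easily obtain (\ref{2VDD-E2.1}), and omit the details,'' and what you have written is precisely that omitted standard energy estimate. Your observation that only the bound $\rho\leq\bar\rho$ is used (not the bootstrap hypothesis (\ref{2VDD-E2.1-2})) is also correct.
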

\begin{proof}
  Using the energy estimate, we can easily obtain
  (\ref{2VDD-E2.1}), and omit the details.
\end{proof}

The following lemma contains preliminary versions of $L^2$ bounds
for $\nabla u$ and $\rho\dot{u}$.

\begin{lem}\label{2VDD-L2.1}
  If $(\rho,u)$
  is a smooth solution of (\ref{2VDD-E1.1})--(\ref{2VDD-E1.2}) as
  in Proposition \ref{2VDD-P2.1}, then there is a constant $C=C(\bar{\rho})$ such that
    \begin{equation}
      \sup_{0<t\leq T}\sigma\int|\nabla
      u|^2dx+\int^T_0\int\sigma\rho|\dot{u}|^2dxdt
            \leq C\left(C_0+C_f+O_1
            \right),\label{2VDD-E2.4}
    \end{equation}
where $O_1=\int^T_0\int\sigma|\nabla u|^3dxdt$, and
    \begin{eqnarray}
      &&\sup_{0<t\leq T}\sigma^2\int\rho|\dot{u}|^2dx
      +\int^T_0\int\sigma^2|\nabla \dot{u}|^2dxdt\nonumber\\
            &\leq& C\left(C_0+C_f+A_1(T)
            \right)+C\int^T_0\int\sigma^2(|u|^4+|\nabla
            u|^4)dxdt.\label{2VDD-E2.5}
    \end{eqnarray}
\end{lem}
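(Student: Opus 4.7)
The plan is to prove the two estimates by classical Hoff-type energy methods: for (\ref{2VDD-E2.4}), test the momentum equation against $\dot u$ (or equivalently $u_t$), weight by $\sigma$, and integrate in $t$; for (\ref{2VDD-E2.5}), apply the material derivative $\partial_t + u\cdot\nabla$ to (\ref{2VDD-E1.19}), test against $\dot u$, weight by $\sigma^2$, and integrate.

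For the first estimate, I would multiply the $j$-th component of the momentum equation in (\ref{2VDD-E1.1}) by $u_t^j$ and integrate. Writing $\int \rho\, u_t\cdot u_t\, dx = \int \rho |\dot u|^2 dx - \int \rho(u\cdot\nabla u)\cdot \dot u\, dx$, the viscous terms produce $\frac{d}{dt}\int\bigl[\tfrac{\mu}{2}|\nabla u|^2+\tfrac{\mu+\lambda(\rho)}{2}(\mathrm{div}\,u)^2\bigr]dx$ up to an error $\int \lambda'(\rho)\rho_t(\mathrm{div}\,u)^2 dx$ which, after invoking the mass equation, is bounded by $C\int|\nabla u|^3 dx$. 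The pressure term $-\int P\,\mathrm{div}\, u_t\, dx$ is integrated by parts in time, giving $-\frac{d}{dt}\int(P-P(\tilde\rho))\mathrm{div}\,u\,dx + \int P_t\,\mathrm{div}\,u\,dx$ and, via $P_t = -P'(\rho)\mathrm{div}(\rho u)$, another cubic contribution. Multiplying by $\sigma(t)$, integrating over $[0,T]$ and using $\sigma'\leq 1$ together with (\ref{2VDD-E2.1}) produces (\ref{2VDD-E2.4}), with the remaining cubic error collected into $O_1$.

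For the second estimate, I would apply $\partial_t + u\cdot\nabla$ to (\ref{2VDD-E1.19}). Using $(\rho g)_t+\mathrm{div}(\rho u g)=\rho\dot g$ (from the mass equation) the left side becomes $\rho\ddot u^j$; on the right, the commutators
\begin{equation*}
(\partial_j F)^{\boldsymbol\cdot}=\partial_j \dot F -\partial_j u^k\,\partial_k F,\qquad (\partial_k w^{j,k})^{\boldsymbol\cdot}=\partial_k \dot w^{j,k} -\partial_k u^l\,\partial_l w^{j,k},
\end{equation*}
generate quadratic-in-$\nabla u$ corrections, and one computes $\dot F = -(2\mu+\lambda(\rho))(\partial_k u^l\partial_l u^k) + $ further terms whose typical size is $|\nabla u|^2 + \rho|\dot u|$. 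Testing against $\dot u^j$ and integrating by parts yields $\tfrac{1}{2}\frac{d}{dt}\int\rho|\dot u|^2 dx+\mu\int|\nabla\dot u|^2 dx+(\mu+\lambda)\int(\mathrm{div}\,\dot u)^2 dx$ up to terms bounded by $\int|\nabla u|\,|\nabla\dot u|\,|\nabla u|\,dx$, $\int|\nabla u|^3|\dot u|$-type contributions, and the external-force commutator $\int\rho\,(u\cdot\nabla f)\cdot\dot u\, dx$. Young's inequality trades $|\nabla u|^2|\nabla\dot u|$ against $\varepsilon|\nabla\dot u|^2+C|\nabla u|^4$, and the force commutator produces the $|u|^4$ term (together with $L^2$ or $L^{2+q}$ norms of $\nabla f$ absorbed into $C_f$ and $M_q$). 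Multiplying the resulting differential inequality by $\sigma^2$, integrating in time, and using $\sigma\sigma'\leq \sigma$ with the already-available control of $A_1(T)$ delivers (\ref{2VDD-E2.5}).

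The hardest step is to identify, regroup and control all commutator terms coming from applying the material derivative to the density-dependent viscous operator $\partial_j((\mu+\lambda(\rho))\mathrm{div}\,u)$, because $\lambda'(\rho)\dot\rho=-\lambda'(\rho)\rho\,\mathrm{div}\,u$ converts these into genuinely cubic-in-$\nabla u$ objects that must be absorbed into $\int\sigma^2|\nabla u|^4$ appearing on the right of (\ref{2VDD-E2.5}). A secondary difficulty is the manipulation of the pressure when $P\notin C^2$, where the alternative assumption that $P/(2\mu+\lambda)$ is monotone (from (\ref{2VDD-E1.5})) is needed to justify the integration-by-parts in time and to keep the resulting error integrable against $\sigma$.
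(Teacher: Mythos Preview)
Your approach is essentially the paper's own. For (\ref{2VDD-E2.4}) the paper multiplies the momentum equation by $\sigma\dot u$ (rather than $\sigma u_t$, but the two differ only by cubic-in-$\nabla u$ terms absorbed into $O_1$), and for (\ref{2VDD-E2.5}) it applies the operator $\partial_t+\mathrm{div}(u\,\cdot)$ to the momentum equation in its original form $\rho\dot u=\mu\Delta u+\nabla((\mu+\lambda)\mathrm{div}\,u)-\nabla P+\rho f$ and tests against $\sigma^2\dot u$; this is equivalent to your route through (\ref{2VDD-E1.19}) once $F$ and $w$ are unpacked. The identification and control of the $\lambda'(\rho)$ commutator terms proceed exactly as you outline, and the $|u|^4$ contribution indeed arises from the force term $\rho u\cdot\nabla f\cdot\dot u$.

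Two minor corrections. First, your displayed identity should read $\int\rho\,\dot u\cdot u_t\,dx=\int\rho|\dot u|^2\,dx-\int\rho(u\cdot\nabla u)\cdot\dot u\,dx$ (the left side is $\rho\dot u\cdot u_t$, not $\rho u_t\cdot u_t$). Second, the alternative hypothesis in (\ref{2VDD-E1.5}) that $P\in C^2$ or $P/(2\mu+\lambda)$ is monotone plays \emph{no} role in this lemma: only $P\in C^1$ is used, since the pressure manipulations involve $P'(\rho)\rho_t$ and $P'(\rho)\nabla\rho$ but never $P''$. That extra assumption is invoked only later, in the compactness argument for $\rho^\delta$ in Section~\ref{2VDD-S3}.
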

\begin{proof}
Multiplying (\ref{2VDD-E1.1})$_2$ by $\sigma\dot{u}$, integrating it
over $\mathbb{R}^2\times[0,t]$, we obtain
    \begin{eqnarray}
      &&\int^t_0\int\sigma\rho|\dot{u}|^2dxds\nonumber\\
            &=&\int^t_0\int\left(-\sigma\dot{u}\cdot\nabla P+\mu\sigma\Delta
            u\cdot\dot{u}+\sigma\nabla((\lambda+\mu)\mathrm{div}u)\cdot\dot{u}
            +\sigma\rho f\cdot\dot{u}
            \right)dxds\nonumber\\
                &:=&\sum^4_{i=1}J_i.\label{2VDD-E2.6}
    \end{eqnarray}
Using the integration by parts and H\"{o}lder's inequality, we have
    \begin{eqnarray}
      J_1&=&-\int^t_0\int\sigma\dot{u}\cdot\nabla P dxds
      \nonumber\\
            &=&-\int^t_0\int\left(-\sigma(\mathrm{div}u)_t(P-P(\tilde{\rho}))
            +\sigma(u\cdot\nabla u)\cdot\nabla P\right)dxds
            \nonumber\\
      &=&\int\sigma\mathrm{div}u(P-P(\tilde{\rho}))dx\nonumber\\
            &&-
      \int^t_0\int\left(\sigma_t\mathrm{div}u(P-P(\tilde{\rho}))
      +\sigma P_t\mathrm{div}u
            +\sigma(u\cdot\nabla u)\cdot\nabla P\right)dxds
            \nonumber\\
      &=&\int\sigma\mathrm{div}u(P-P(\tilde{\rho}))dx-
      \int^t_0\int\{\sigma_t\mathrm{div}u(P-P(\tilde{\rho}))\nonumber\\
            &&\left.
      +\sigma P'\left(-\rho(\mathrm{div}u)^2-u\cdot\nabla
      \rho\mathrm{div}u
            +(u\cdot\nabla u)\cdot\nabla \rho\right)\right\}dxds
            \nonumber\\
      &=&\int\sigma\mathrm{div}u(P-P(\tilde{\rho}))dx-
      \int^t_0\int\{\sigma_t\mathrm{div}u(P-P(\tilde{\rho}))\nonumber\\
            &&      -\sigma P'\rho(\mathrm{div}u)^2+\sigma P
      (\mathrm{div}u)^2-\sigma P\partial_i u^j\partial_j u^i\}dxds
            \nonumber\\
      &\leq& C\left(\sigma\int|\nabla u||\rho-\tilde{\rho}|dx
      +\int^{1\wedge t}_0\int |\nabla u||\rho-\tilde{\rho}|dxds
      +\int^t_0\int |\nabla u|^2dxds\right),
    \end{eqnarray}
        \begin{eqnarray}
          J_2&=&\int^t_0\int\sigma\mu \Delta
          u\cdot\dot{u}dxds\nonumber\\
                &=&-\frac{\sigma\mu}{2}\int|\nabla u|^2dx
                +\frac{\mu}{2}\int^{1\wedge t}_0\int |\nabla
                u|^2dxds
                -\int^t_0\int\sigma\mu\partial_i u^j\partial_i(u^k\partial_k
                u^j)dxds\nonumber\\
          &\leq&-\frac{\sigma\mu}{2}\int|\nabla u|^2dx
                +\frac{\mu}{2}\int^{1\wedge t}_0\int |\nabla
                u|^2dxds
                +CO_1,
        \end{eqnarray}
            \begin{eqnarray}
              J_3&=&\int^t_0\int\sigma\nabla((\lambda+\mu)\mathrm{div}u)\cdot\dot{u}dxds\nonumber\\
              &=&-\frac{\sigma}{2}\int(\lambda+\mu)|\mathrm{div}u|^2dx
              +\int^t_0\int(\frac{\sigma}{2}\lambda'\rho_t|\mathrm{div}u|^2
              +\frac{\sigma_t}{2}(\lambda+\mu)|\mathrm{div}u|^2)dxds\nonumber\\
                    &&+\int^t_0\int\sigma\nabla((\lambda+\mu)\mathrm{div}u)\cdot(u\cdot\nabla u)dxds
                    \nonumber\\
                        &\leq&-\frac{\sigma}{2}\int(\lambda+\mu)|\mathrm{div}u|^2dx
              +\int^{1\wedge
              t}_0\int\frac{1}{2}(\lambda+\mu)|\mathrm{div}u|^2dxds+CO_1,
            \end{eqnarray}
        \begin{equation}
          J_4=\int^t_0\int\sigma\rho f\cdot\dot{u} dxds\leq
          \frac{1}{2}\int^t_0\int\sigma\rho |\dot{u}|^2 dxds
          +C\int^t_0\int\sigma f^2dxds,\label{2VDD-E2.10}
        \end{equation}
where $t\in[0,T]$. From (\ref{2VDD-E2.1}),
(\ref{2VDD-E2.6})--(\ref{2VDD-E2.10}), we immediately obtain
(\ref{2VDD-E2.4}).

Takeing the operator $\partial_t+\mathrm{div}( u\cdot)$ in
(\ref{2VDD-E1.1})$_2$, multiplying
 by $\sigma^2\dot{u}$ and integrating, we obtain
    \begin{eqnarray}
      &&\frac{\sigma^2}{2}\int\rho|\dot{u}|^2dx\nonumber\\
            &=&\int^t_0\int\{\sigma
            \sigma'\rho|\dot{u}|^2-\sigma^2\dot{u}^j[
            \partial_j P_t+\mathrm{div}(\partial_j Pu)]
            +\mu\sigma^2\dot{u}^j[\Delta u^j_t+\mathrm{div}(u\Delta
            u^j)]\nonumber\\
                &&+\sigma^2\dot{u}^j[\partial_j\partial_t((\lambda+\mu)\mathrm{div}u)
                +\mathrm{div}(u\partial_j((\lambda+\mu)\mathrm{div}u))]
                +\sigma^2\dot{u}^j[(\rho f^j)_t+\mathrm{div}(u\rho f^j)]
            \}dxds\nonumber\\
                &:=&\sum^5_{i=1}K_i.\label{2VDD-E2.11}
    \end{eqnarray}
Using the integration by parts and H\"{o}lder's inequality, we have
    \begin{eqnarray}
      K_2&=&-\int^t_0\int\sigma^2\dot{u}^j[
            \partial_j P_t+\mathrm{div}(\partial_j
            Pu)]dxds\nonumber\\
                &=&\int^t_0\int\sigma^2[\partial_j\dot{u}^jP'\rho_t
                +\partial_k\dot{u}^j\partial_jP
                u^k]dxds\nonumber\\
      &=&\int^t_0\int\sigma^2[
        -P'\rho\mathrm{div}u\partial_j\dot{u}^j+\partial_k(\partial_j\dot{u}^ju^k)P
      -P\partial_j(\partial_k\dot{u}^ju^k)]dxds\nonumber\\
            &\leq&C\left(\int^t_0\int|\nabla u|^2
            dxds    \right)^\frac{1}{2}
            \left(\int^t_0\int\sigma^2|\nabla\dot{u}|^2
            dxds    \right)^\frac{1}{2}\nonumber\\
            &\leq&C (C_0+C_f)^\frac{1}{2}
            \left(\int^t_0\int\sigma^2|\nabla\dot{u}|^2
            dxds    \right)^\frac{1}{2},
    \end{eqnarray}
        \begin{eqnarray}
          K_3&=&\int^t_0\int\mu\sigma^2\dot{u}^j[\Delta u^j_t+\mathrm{div}(u\Delta
            u^j)]dxds\nonumber\\
                &=&-\int^t_0\int\sigma^2\mu[
                \partial_i\dot{u}^j\partial_iu^j_t+\Delta u^j
                u\cdot\nabla \dot{u}^j
                ]dxds\nonumber\\
            &=&-\int^t_0\int\sigma^2\mu[
            |\nabla \dot{u}|^2-\partial_i\dot{u}^j u^k\partial_k\partial_i
            u^j-\partial_i \dot{u}^j\partial_i u^k\partial_k u^j
            +\Delta u^ju\cdot\nabla\dot{u}^j
            ]dxds\nonumber\\
                    &=&-\int^t_0\int\sigma^2\mu[|\nabla \dot{u}|^2
                    +\partial_i\dot{u}^j\partial_k
                    u^k\partial_iu^j-\partial_i \dot{u}^j\partial_i u^k\partial_k u^j-\partial_i u^j\partial_i
                    u^k\partial_k\dot{u}^j
                    ]dxds\nonumber\\
            &\leq&-\frac{1}{2}\int^t_0\int\sigma^2\mu|\nabla \dot{u}|^2dxds
            +C\int^t_0\int \sigma^2|\nabla u|^4dxds,
        \end{eqnarray}
            \begin{eqnarray}
                K_4&=&\int^t_0\int\sigma^2\dot{u}^j[\partial_j\partial_t((\lambda+\mu)\mathrm{div}u)
                +\mathrm{div}(u\partial_j((\lambda+\mu)\mathrm{div}u))]dxds\nonumber\\
            &=&-\int^t_0\int\{\sigma^2\partial_j\dot{u}^j[\partial_t((\lambda+\mu)\mathrm{div}u)
                +\mathrm{div}(u(\lambda+\mu)\mathrm{div}u)]\nonumber\\
                    &&+\sigma^2\dot{u}^j\mathrm{div}(\partial_ju(\lambda+\mu)\mathrm{div}u)\}dxds\nonumber\\
                        &=&-\int^t_0\int\sigma^2\partial_j\dot{u}^j[\partial_t((\lambda+\mu)\mathrm{div}u)
                +u^k\lambda'\partial_k\rho\mathrm{div}u+(\lambda+\mu)u^k\partial_k\mathrm{div}u]dxds+O_4\nonumber\\
                                &=&-\int^t_0\int\sigma^2\partial_j\dot{u}^j[(\lambda+\mu)\frac{D}{Dt}\mathrm{div}u
                                +\lambda'\rho_t\mathrm{div}u
                +u^k\lambda'\partial_k\rho\mathrm{div}u]dxds+O_4\nonumber\\
                             &=&-\int^t_0\int\sigma^2\partial_j(\partial_tu^j
                             +u\cdot\nabla u^j)(\lambda+\mu)\frac{D}{Dt}
                             \mathrm{div}udxds+O_4\nonumber\\
            &=&-\int^t_0\int\sigma^2(\lambda+\mu)|\frac{D}{Dt}
                             \mathrm{div}u|^2dxds+O_4,
            \end{eqnarray}
        \begin{eqnarray}
          K_5&=&\int^t_0\int \sigma^2\dot{u}^j[(\rho f^j)_t+\mathrm{div}(u\rho f^j)]
           dxds\nonumber\\
                &=&\int^t_0\int \sigma^2\dot{u}^j[\rho f^j_t+\rho u\cdot\nabla
                f^j]dxds\nonumber\\
                    &\leq& C\int^t_0\int[\sigma\rho|\dot{u}|^2
                    +\sigma^2|u|^4+\sigma^4|\nabla
                    f|^4+\sigma^3|f_t|^2
                    ]          dxds,\label{2VDD-E2.15}
        \end{eqnarray}
where $O_4$ denotes any term dominated by
$C\int^t_0\int\sigma^2|\nabla u|^2(|\nabla
\dot{u}|+|\frac{D}{Dt}\mathrm{div}u|)$, and $t\in[0,T]$. From
(\ref{2VDD-E2.11})--(\ref{2VDD-E2.15}), we immediately obtain
(\ref{2VDD-E2.5}).
\end{proof}

The following  lemmas will be applied to bound the higher order
terms occurring on the right hand sides of
(\ref{2VDD-E2.4})--(\ref{2VDD-E2.5}).

\begin{lem}\label{2VDD-L2.2}
  If $(\rho,u)$
  is a smooth solution of (\ref{2VDD-E1.1})--(\ref{2VDD-E1.2}) as
  in Proposition \ref{2VDD-P2.1}, then there is a constant $C=C(\bar{\rho})$ such that,
    \begin{equation}
      \|u\|_{L^p}^p\leq
    C_p(C_0+C_f)\|\nabla
      u\|_{L^2}^{p-2}+C_p(C_0+C_f)\|\nabla
      u\|_{L^2}^p,\ p\in[2,\infty),\label{2VDD-E2.16-0}
    \end{equation}
        \begin{equation}
          \|\nabla u\|_{L^p}\leq C_p(\|F\|_{L^p}
          +\|w\|_{L^p}+\|P-P(\tilde{\rho})\|_{L^p}
          ),\ p\in(1,\infty),\label{2VDD-E2.16-1}
        \end{equation}
      \begin{equation}
        \|\nabla F\|_{L^p}+    \|\nabla w\|_{L^p}\leq C_p(\|\rho\dot{u}\|_{L^p}
    +\|f\|_{L^p}),\ p\in(1,\infty),\label{2VDD-E2.16-2}
    \end{equation}
         \begin{eqnarray}
        \|F\|_{L^p}+    \| w\|_{L^p}&\leq& C_p\big(\|\rho\dot{u}\|_{L^2}^{\frac{p-2}{p}}
        (\|\nabla
        u\|_{L^2}^{\frac{2}{p}}+\|P-P(\tilde{\rho})\|_{L^2}^{\frac{2}{p}})\nonumber\\
                &&
    +\|\nabla u\|_{L^2}+\|f\|_{L^2}+\|P-P(\tilde{\rho})\|_{L^2}\big),\ p\in[2,\infty).\label{2VDD-E2.16-3}
    \end{eqnarray}
Also, for $0\leq t_1\leq t_2\leq T$, $p\geq2$ and $s\geq0$,
    \begin{equation}
      \int^{t_2}_{t_1}\int\sigma^s|\rho-\tilde{\rho}|^pdxds
      \leq C\left(\int^{t_2}_{t_1}\int\sigma^s|F|^pdxds
      +C_0+C_f
      \right).\label{2VDD-E2.16-4}
    \end{equation}
\end{lem}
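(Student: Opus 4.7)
\medskip

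\noindent\textbf{Proof proposal for Lemma \ref{2VDD-L2.2}.}

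The plan is to treat the five estimates in the order that is most convenient, namely (\ref{2VDD-E2.16-2}), (\ref{2VDD-E2.16-1}), (\ref{2VDD-E2.16-3}), (\ref{2VDD-E2.16-0}), (\ref{2VDD-E2.16-4}), because each one feeds the next.

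First, for (\ref{2VDD-E2.16-2}) I would use the two identities (\ref{2VDD-E1.19})--(\ref{2VDD-E1.20}) that the authors have already highlighted. Taking $\mathrm{div}$ of the momentum equation yields $\Delta F=\mathrm{div}(\rho\dot u-\rho f)$, while taking $\mathrm{curl}$ (componentwise) gives $\mu\Delta w^{j,k}=\partial_k(\rho\dot u^j-\rho f^j)-\partial_j(\rho\dot u^k-\rho f^k)$. Standard Calder\'on--Zygmund $L^p$ theory for the Laplacian in $\mathbb{R}^2$ then gives $\|\nabla F\|_{L^p}+\|\nabla w\|_{L^p}\le C_p(\|\rho\dot u\|_{L^p}+\|\rho f\|_{L^p})$, and we absorb $\rho\le\bar\rho$ into the constant. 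For (\ref{2VDD-E2.16-1}), I would invert the decomposition at the level of first derivatives: writing $\mathrm{div}\,u=(F+P-P(\tilde\rho))/(2\mu+\lambda(\rho))$ and using that $w^{j,k}=\partial_k u^j-\partial_j u^k$, the Helmholtz decomposition gives $\nabla u$ in terms of Riesz transforms of $\mathrm{div}\,u$ and $w$, so the $L^p$ boundedness of Riesz transforms yields $\|\nabla u\|_{L^p}\le C_p(\|F\|_{L^p}+\|P-P(\tilde\rho)\|_{L^p}+\|w\|_{L^p})$ once one notes that $2\mu+\lambda(\rho)\ge 2\mu>0$.

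For (\ref{2VDD-E2.16-3}) the natural step is a 2-D Gagliardo--Nirenberg interpolation: in $\mathbb{R}^2$, $\|g\|_{L^p}\le C_p\|g\|_{L^2}^{2/p}\|\nabla g\|_{L^2}^{1-2/p}$ for $p\ge 2$. Apply this with $g=F$ (resp.\ $g=w$), estimate $\|F\|_{L^2}+\|w\|_{L^2}\le C(\|\nabla u\|_{L^2}+\|P-P(\tilde\rho)\|_{L^2})$ directly from the definitions, and use (\ref{2VDD-E2.16-2}) with $p=2$ for $\|\nabla F\|_{L^2}+\|\nabla w\|_{L^2}$. Then a Young-type splitting $(a+b)^{2/p}(c+d)^{(p-2)/p}\le C(a^{2/p}+b^{2/p})c^{(p-2)/p}+C(a+b)+C(c+d)$ produces exactly the form of (\ref{2VDD-E2.16-3}). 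For (\ref{2VDD-E2.16-0}) I would again invoke the 2-D Gagliardo--Nirenberg inequality $\|u\|_{L^p}^p\le C_p\|u\|_{L^2}^2\|\nabla u\|_{L^2}^{p-2}$, and then bound $\|u\|_{L^2}^2$ by splitting the integral around a ball on whose exterior $\rho\ge\tilde\rho/2$ (using $\rho\to\tilde\rho$ and the $L^\infty$ bound), giving $\int u^2\le \frac{2}{\tilde\rho}\int\rho u^2+\int_{B_R}u^2$ and then estimating the compact piece by a Poincar\'e-type inequality up to a further $\|\nabla u\|_{L^2}^2$ term; with Proposition \ref{2VDD-P2.1} this produces the desired bound (the absorbing of the second $\|\nabla u\|_{L^2}^p$ term matches the form stated).

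The main obstacle, and the step I would expect to require the most care, is (\ref{2VDD-E2.16-4}). My plan is to exploit the transport structure of the mass equation in the form (\ref{2VDD-E1.18}). Writing $\mathrm{div}\,u=(F+(P-P(\tilde\rho)))/(2\mu+\lambda(\rho))$ and substituting into $\rho_t+u\cdot\nabla\rho=-\rho\,\mathrm{div}\,u$ gives
\begin{equation*}
\frac{D}{Dt}(\rho-\tilde\rho)+\frac{\rho\bigl(P(\rho)-P(\tilde\rho)\bigr)}{2\mu+\lambda(\rho)}
=-\frac{\rho F}{2\mu+\lambda(\rho)}.
\end{equation*}
Multiplying by $p\,\sigma^s|\rho-\tilde\rho|^{p-2}(\rho-\tilde\rho)$, integrating in $x$, and using $(\rho-\tilde\rho)(P(\rho)-P(\tilde\rho))\ge c|\rho-\tilde\rho|^2$ (from (\ref{2VDD-E1.5}) together with $P'(\tilde\rho)>0$ and the monotonicity/continuity hypotheses), one obtains an inequality of the form $\tfrac{d}{dt}\int\sigma^s|\rho-\tilde\rho|^p\,dx+c\int\sigma^s|\rho-\tilde\rho|^p\,dx\le C\int\sigma^s|F|^p\,dx+C\sigma^{s-1}\mathbf 1_{t\le 1}\int|\rho-\tilde\rho|^p\,dx$, where the last term from $\partial_t\sigma^s$ is harmless on $[0,1]$. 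Integrating in $t$ and absorbing the dissipative piece controls $\int\!\!\int\sigma^s|\rho-\tilde\rho|^p$ in terms of $\int\!\!\int\sigma^s|F|^p$ plus initial data bounded by $C_0$ (via the potential energy $G$) and the energy input $C_f$, which is exactly (\ref{2VDD-E2.16-4}). The delicate point is controlling the contribution from regions where $\rho$ is far from $\tilde\rho$, since there the linear coercivity $(\rho-\tilde\rho)(P(\rho)-P(\tilde\rho))\gtrsim|\rho-\tilde\rho|^2$ has to be upgraded to a $|\rho-\tilde\rho|^p$ coercivity; this is where the condition that $(\rho-\tilde\rho)(P(\rho)-P(\tilde\rho))>0$ for all $\rho\neq\tilde\rho$ in $[0,\bar\rho]$ (rather than just near $\tilde\rho$), combined with the uniform $L^\infty$ bound on $\rho$, is essential.
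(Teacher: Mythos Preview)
Your treatment of (\ref{2VDD-E2.16-1})--(\ref{2VDD-E2.16-4}) is essentially the same as the paper's: Calder\'on--Zygmund for $\nabla F,\nabla w$, Riesz transforms for $\nabla u$, Gagliardo--Nirenberg interpolation for (\ref{2VDD-E2.16-3}), and the transport-with-damping argument for (\ref{2VDD-E2.16-4}). Those parts are fine.

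The gap is in your argument for (\ref{2VDD-E2.16-0}). You propose to split the integral of $|u|^2$ over a ball $B_R$ on whose exterior $\rho\ge\tilde\rho/2$, and then use a Poincar\'e-type inequality on $B_R$. The problem is that the radius $R$ depends on the particular solution (and on $t$); the boundary condition $\rho\to\tilde\rho$ at infinity gives no uniform control on $R$ in terms of $\bar\rho$ and $C_0+C_f$ alone. Hence the Poincar\'e constant, and with it the final constant in (\ref{2VDD-E2.16-0}), would depend on the solution, which is fatal for the later compactness step where one sends the mollification parameter $\delta\to 0$.

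The paper avoids this by a purely integral trick: write
\[
\tilde\rho\int|u|^2\,dx\le\int\rho|u|^2\,dx+\Bigl(\int|\rho-\tilde\rho|^2\,dx\Bigr)^{1/2}\Bigl(\int|u|^4\,dx\Bigr)^{1/2},
\]
bound the first term by $C(C_0+C_f)$ via Proposition~\ref{2VDD-P2.1}, bound $\|\rho-\tilde\rho\|_{L^2}$ likewise (since $|\rho-\tilde\rho|^2\le CG(\rho)$), and estimate $\|u\|_{L^4}^2\le C\|u\|_{L^2}\|\nabla u\|_{L^2}$ by Gagliardo--Nirenberg. This closes to $\|u\|_{L^2}^2\le C(C_0+C_f)+C(C_0+C_f)\|\nabla u\|_{L^2}^2$, with constants depending only on $\bar\rho$. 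Substituting back into $\|u\|_{L^p}^p\le C_p\|u\|_{L^2}^2\|\nabla u\|_{L^2}^{p-2}$ gives (\ref{2VDD-E2.16-0}). You should replace the ball-splitting argument with this; the rest of your proposal stands.
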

\begin{proof}
Using the Galiardo-Nirenberg inequality, we have
    \begin{equation}
      \|u\|_{L^p}\leq C_p\|u\|_{L^2}^\frac{2}{p}\|\nabla
      u\|_{L^2}^{\frac{p-2}{p}},\ p\in[2,\infty).\label{2VDD-E2.16}
    \end{equation}
Since
    $$
    \tilde{\rho}\int|u|^2dx\leq \int\rho |u|^2dx+
    \left(\int|\rho-\tilde{\rho}|^2
    dx    \right)^\frac{1}{2}
    \left(\int|u|^4
    dx    \right)^\frac{1}{2},
    $$
applying (\ref{2VDD-E2.16}) and Proposition \ref{2VDD-P2.1}, we
get
    $$
    \|u\|_{L^2}^2\leq
    C(C_0+C_f)+C(C_0+C_f)^{\frac{1}{2}}\|u\|_{L^2}\|\nabla u\|_{L^2}
    $$
and
     $$
    \|u\|_{L^2}^2\leq
    C(C_0+C_f)+C(C_0+C_f)\|\nabla u\|_{L^2}^2.
    $$
Combining it with (\ref{2VDD-E2.16}), we get (\ref{2VDD-E2.16-0}).

Since $F=(\lambda+2\mu)\mathrm{div}u-P+P(\tilde{\rho})$, we have
    \begin{equation}
    \Delta
    u^j=\partial_j(\frac{F+P-P(\tilde{\rho})}{\lambda+2\mu})+\partial_i(w^{j,i}).
    \label{2VDD-E2.24}
    \end{equation}
From the standard elliptic theory, we can get (\ref{2VDD-E2.16-1}).

We compute from (\ref{2VDD-E1.1})$_2$ that
    $$
    \mu\Delta w^{j,k}=\partial_k(\rho \dot{u}^j)
    -\partial_j(\rho\dot{u}^k)+\partial_j(\rho f^k)-\partial_k(\rho
    f^j).
    $$
Using the standard elliptic theory, we can get
    $$
    \|\nabla w\|_{L^p}\leq C_p(\|\rho\dot{u}\|_{L^p}
    +\|f\|_{L^p}),\ p\in(1,\infty).
    $$
From (\ref{2VDD-E1.19}), we have
    $$
    \partial_j F=\rho\dot{u}^j-\mu\partial_k w^{j,k}-\rho f^j
    $$
and
    $$
    \|\nabla F\|_{L^p}\leq C(\|\rho\dot{u}\|_{L^p}
    +\|\nabla w\|_{L^p}+\|f\|_{L^p}),\ p\in(1,\infty).
    $$
Thus, we can obtain (\ref{2VDD-E2.16-2}).

From (\ref{2VDD-E2.16-2}) with $p=2$ and (\ref{2VDD-E2.16}), we
can immediately obtain (\ref{2VDD-E2.16-3}).

Multiplying the mass equation (\ref{2VDD-E1.1})$_1$ by
$p\sigma^s|\rho-\tilde{\rho}|^{p-1}\mathrm{sgn}(\rho-\tilde{\rho})$,
we have
    \begin{eqnarray*}
    &&(\partial_t+\mathrm{div}(u\cdot))(\sigma^s|\rho-\tilde{\rho}|^p)
    +\frac{\sigma^s}{\lambda+2\mu}((p-1)\rho+\tilde{\rho})|\rho-\tilde{\rho}|^{p-1}|P-P(\tilde{\rho})|\\
        &=&s\sigma^{s-1}\sigma_t|\rho-\tilde{\rho}|^p-
        \frac{\sigma^s}{\lambda+2\mu}((p-1)\rho+\tilde{\rho})\mathrm{sgn}(\rho-\tilde{\rho})
        |\rho-\tilde{\rho}|^{p-1}F.
    \end{eqnarray*}
Integrating, we get
    \begin{eqnarray}
      &&\left.\int\sigma^s|\rho-\tilde{\rho}|^pdx\right|^{t_2}_{t_1}
      +C^{-1}\int^{t_2}_{t_1}\int\sigma^s|\rho-\tilde{\rho}|^pdxds\nonumber\\
        &\leq& C\left(
        \int^{\sigma(t_2)\vee
        t_1}_{t_1}\int\sigma^{s-1}|\rho-\tilde{\rho}|^pdxds
        +\int^{t_2}_{t_1}\int\sigma^s|F|^pdxds
        \right).\label{2VDD-E2.25}
    \end{eqnarray}
Using the result of Proposition \ref{2VDD-P2.1}, we can
immediately get (\ref{2VDD-E2.16-4}).
\end{proof}

Now, we apply the estimates of Lemma \ref{2VDD-L2.2} to close the
bounds in  Lemma \ref{2VDD-L2.1}.

\begin{prop}\label{2VDD-P2.2}
If $(\rho,u)$
  is a smooth solution of (\ref{2VDD-E1.1})--(\ref{2VDD-E1.2}) as
  in Proposition \ref{2VDD-P2.1}, and $\varepsilon$ is small enough,
  then we have
    \begin{equation}
      \sup_{0<t\leq T}\int\left(\sigma
      |\nabla
      u|^2+\sigma^2\rho|\dot{u}|^2\right)dx
            +\int^T_0\int\left(\sigma\rho|\dot{u}|^2+
            \sigma^2|\nabla\dot{u}|^2
            \right)dxdt
        \leq(C_0+C_f)^\theta.\label{2VDD-E2.22}
    \end{equation}
\end{prop}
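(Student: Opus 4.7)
The plan is to start from the two preliminary bounds \eqref{2VDD-E2.4} and \eqref{2VDD-E2.5} of Lemma \ref{2VDD-L2.1}, and combine them with the elliptic-type estimates of Lemma \ref{2VDD-L2.2} and the a priori hypothesis $A_1+A_2\le 2(C_0+C_f)^{1/2}$ in order to show that the super-linear remainder terms $O_1$ and $\int_0^T\!\!\int\sigma^2(|u|^4+|\nabla u|^4)\,dx\,dt$ are actually bounded by $C\varepsilon^{\delta}(C_0+C_f)^{1/2}$ for some $\delta>0$. Since $\varepsilon$ may be chosen as small as we please (depending on $\bar\rho,\mu,\lambda,P,M$), this strictly improves $A_1+A_2\le 2(C_0+C_f)^{1/2}$ to $A_1+A_2\le (C_0+C_f)^{1/2}$, which is precisely \eqref{2VDD-E2.22} with $\theta=\tfrac12$.

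For $O_1=\int_0^T\!\!\int\sigma|\nabla u|^3\,dx\,dt$, first apply \eqref{2VDD-E2.16-1} at $p=3$ to reduce $\|\nabla u\|_{L^3}^3$ to $\|F\|_{L^3}^3+\|w\|_{L^3}^3+\|\rho-\tilde\rho\|_{L^3}^3$, then invoke \eqref{2VDD-E2.16-3} at $p=3$ for the first two. The representative (and hardest) term after time integration is the cross product
\begin{equation*}
\int_0^T\!\sigma\,\|\rho\dot u\|_{L^2}\|\nabla u\|_{L^2}^2\,dt \le \Bigl(\int_0^T\!\sigma\|\rho\dot u\|_{L^2}^2\,dt\Bigr)^{1/2}\Bigl(\sup_t\sigma\|\nabla u\|_{L^2}^2\Bigr)^{1/2}\Bigl(\int_0^T\!\|\nabla u\|_{L^2}^2\,dt\Bigr)^{1/2},
\end{equation*}
which by Proposition \ref{2VDD-P2.1}, the inequality $\|\rho\dot u\|_{L^2}^2\le\bar\rho\rho|\dot u|^2$, and the a priori assumption is at most $CA_1(C_0+C_f)^{1/2}\le C(C_0+C_f)\le C\varepsilon^{1/2}(C_0+C_f)^{1/2}$. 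The contributions from $\|\nabla u\|_{L^2}^3$, $\|f\|_{L^2}^3$ and $\|\rho-\tilde\rho\|_{L^p}^p$ are handled in the same spirit, with \eqref{2VDD-E2.16-4} used to trade density differences back for $F$.

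For the quartic terms in \eqref{2VDD-E2.5}, the $|u|^4$ piece follows immediately from \eqref{2VDD-E2.16-0} at $p=4$, giving $\int_0^T\sigma^2\|u\|_{L^4}^4\,dt\le C(C_0+C_f)\bigl[\int\sigma^2\|\nabla u\|_{L^2}^2\,dt+\sup_t(\sigma\|\nabla u\|_{L^2}^2)\int\sigma\|\nabla u\|_{L^2}^2\,dt\bigr]\le C(C_0+C_f)^2+C(C_0+C_f)^{5/2}$, which is $O(\varepsilon^{3/2}(C_0+C_f)^{1/2})$. For $\int\sigma^2|\nabla u|^4$, the combination of \eqref{2VDD-E2.16-1} and \eqref{2VDD-E2.16-3} at $p=4$ with Hölder in time produces the dominant bound $\int_0^T\sigma^2\|\rho\dot u\|_{L^2}^2\|\nabla u\|_{L^2}^2\,dt\le \sup_t(\sigma^2\|\rho\dot u\|_{L^2}^2)\int\|\nabla u\|_{L^2}^2\,dt\le CA_2(C_0+C_f)\le C(C_0+C_f)^{3/2}$. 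The leftover density term $\int\sigma^2\|\rho-\tilde\rho\|_{L^4}^4\,dt$ is controlled via \eqref{2VDD-E2.16-4}, which generates a feedback onto $\int\sigma^2|F|^4\,dt$ that is absorbed for $\varepsilon$ small.

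Plugging these majorants back into \eqref{2VDD-E2.4}-\eqref{2VDD-E2.5} yields $A_1+A_2\le C(C_0+C_f)+C\varepsilon^{\delta}(C_0+C_f)^{1/2}$, so a sufficiently small $\varepsilon$ closes \eqref{2VDD-E2.22}. The main technical obstacle is precisely the feedback loop between $\|\nabla u\|_{L^p}$, $\|F\|_{L^p}$ and $\|\rho-\tilde\rho\|_{L^p}$ forced by \eqref{2VDD-E2.16-1}, \eqref{2VDD-E2.16-3} and \eqref{2VDD-E2.16-4}; the loop is broken by observing that $F$ enters the right-hand side of \eqref{2VDD-E2.16-4} with exactly the same weight $\sigma^s$, and by using the smallness $C_0+C_f\le\varepsilon$ to push the resulting term back to the left-hand side. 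Everything else is careful bookkeeping of interpolation powers.
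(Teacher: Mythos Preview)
Your approach is correct and follows the same overall strategy as the paper: start from \eqref{2VDD-E2.4}--\eqref{2VDD-E2.5}, use the elliptic estimates of Lemma \ref{2VDD-L2.2} to control the super-linear remainders $O_1$ and $\int\!\!\int\sigma^2(|u|^4+|\nabla u|^4)$, and close via the a priori hypothesis $A_1+A_2\le 2(C_0+C_f)^{1/2}$ together with the smallness of $\varepsilon$.

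The paper's execution is somewhat more streamlined in two places. First, for $O_1$ the paper simply writes $\sigma|\nabla u|^3\le \sigma^2|\nabla u|^4+|\nabla u|^2$, which reduces the cubic term immediately to the quartic term plus the basic energy, avoiding your separate $p=3$ interpolation. Second, for $\int\!\!\int\sigma^2|F|^4$ and $\int\!\!\int\sigma^2|w|^4$ the paper applies the 2D Gagliardo--Nirenberg inequality $\|F\|_{L^4}^4\le C\|F\|_{L^2}^2\|\nabla F\|_{L^2}^2$ directly, then splits the weight as $\sigma^2=\sigma\cdot\sigma$ and bounds by $\sup_t\sigma\|F\|_{L^2}^2\cdot\int_0^T\sigma\|\nabla F\|_{L^2}^2\,dt\le C(A_1+C_0+C_f)^2$; this uses only $A_1$, whereas your distribution of weights in the $|\nabla u|^4$ estimate draws on $A_2$. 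Either route works, but the paper's version yields the clean intermediate inequality $\mathrm{LHS}\le C(C_0+C_f)+CA_1^2$, from which the conclusion follows at once. Your worry about a genuine ``feedback loop'' between $\|\nabla u\|_{L^p}$, $\|F\|_{L^p}$ and $\|\rho-\tilde\rho\|_{L^p}$ is overstated: by \eqref{2VDD-E2.16-4} the term $\int\!\!\int\sigma^s|\rho-\tilde\rho|^p$ is bounded by $C\int\!\!\int\sigma^s|F|^p+C(C_0+C_f)$, and $\int\!\!\int\sigma^s|F|^p$ has already been estimated independently via \eqref{2VDD-E2.16-2}--\eqref{2VDD-E2.16-3}, so no absorption argument is needed.
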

\begin{proof}
  From Proposition \ref{2VDD-P2.1} and Lemmas
  \ref{2VDD-L2.1}-\ref{2VDD-L2.2}, we have
        \begin{equation}
          \textrm{LHS of }(\ref{2VDD-E2.22})\leq C(C_0+C_f)
          +C\int^T_0\int\left(\sigma|\nabla u|^3
          +\sigma^2|u|^4+\sigma^2|\nabla u|^4
          \right)dxds.\label{2VDD-E2.25-0}
        \end{equation}
From  (\ref{2VDD-E2.16-1}), we get
    \begin{equation}
      \int^T_0\int\sigma^2|\nabla u|^4dxds\leq
      \int^T_0\int\sigma^2\left[
      |F|^4+|w|^4+|P-P(\tilde{\rho})|^4
      \right]dxds.\label{2VDD-E2.26}
    \end{equation}
From (\ref{2VDD-E2.1}), (\ref{2VDD-E2.16-0}) and
(\ref{2VDD-E2.16-2})--(\ref{2VDD-E2.16}), we obtain
    \begin{eqnarray}
        && \int^T_0\int\sigma^2|F|^4dxds\nonumber\\
                &\leq&
      C\int^T_0\sigma^2\left(\int |F|^2dx
      \right)\left(\int |\nabla F|^2dx
      \right)ds\nonumber\\
            &\leq&C\sup_{0\leq t\leq T}\int\sigma(|\nabla u|^2+|\rho-\tilde{\rho}|^2)dx
            \int^T_0\int\sigma\left(\rho|\dot{u}|^2+|f|^2
            \right)dxds\nonumber\\
                &\leq&C(A_1+C_0+C_f)^2,\label{2VDD-E2.28}
    \end{eqnarray}
        \begin{eqnarray}
        && \int^T_0\int\sigma^2|w|^4dxds\nonumber\\
                &\leq&
      C\int^T_0\sigma^2\left(\int |w|^2dx
      \right)\left(\int |\nabla w|^2dx
      \right)ds\nonumber\\
            &\leq&C\sup_{0\leq t\leq T}\int\sigma|\nabla u|^2dx
            \int^T_0\int\sigma\left(\rho|\dot{u}|^2+|f|^2
            \right)dxds\nonumber\\
                &\leq&C(A_1+C_0+C_f)^2,
    \end{eqnarray}
        \begin{eqnarray}
        \int^T_0\int\sigma^2|\rho-\tilde{\rho}|^4dxds&\leq&
        C\left(\int^T_0\int\sigma^2|F|^4dxds+C_0+C_f
        \right)\nonumber\\
            &\leq&C(A_1+C_0+C_f)^2+C(C_0+C_f),
        \end{eqnarray}
    \begin{eqnarray}
        \int^T_0\int\sigma^2|u|^4dxds&\leq&
        C(C_0+C_f)\int^T_0\sigma^2\left(\|\nabla u\|_{L^2}^2+\|\nabla
        u\|_{L^2}^4
        \right)ds\nonumber\\
            &\leq&CA_1(C_0+C_f)^2+C(C_0+C_f)^2.\label{2VDD-E2.30}
        \end{eqnarray}
From (\ref{2VDD-E2.26})--(\ref{2VDD-E2.30}), we have
    \begin{equation}
      \int^T_0\int\sigma^2(|u|^4+|\nabla u|^4)dxds
      \leq C A_1^2+C(C_0+C_f).\label{2VDD-E2.31}
    \end{equation}
    Similarly, we get
     \begin{eqnarray}
      \int^T_0\int\sigma|\nabla u|^3dxds&\leq&
      \int^T_0\int\left(\sigma^2|\nabla u|^4+|\nabla u|^2
      \right)dxds\nonumber\\
        &\leq& CA_1^2+C(C_0+C_f).\label{2VDD-E2.32}
    \end{eqnarray}
Then, from (\ref{2VDD-E2.1-2}), (\ref{2VDD-E2.25-0}) and
(\ref{2VDD-E2.31})--(\ref{2VDD-E2.32}), we obtain
     \begin{eqnarray}
          \textrm{LHS of }(\ref{2VDD-E2.22})&\leq& C(C_0+C_f)
          +CA_1^2\nonumber\\
                &\leq& C(C_0+C_f)
                +C(C_0+C_f)^{2\theta}\nonumber\\
          &\leq&
          (C_0+C_f)^\theta ,
        \end{eqnarray}
when
    \begin{equation}
      \varepsilon^{1-\theta}+\varepsilon^{\theta}
      \leq\frac{1}{2C}.
    \end{equation}
\end{proof}

Then, we consider the H\"{o}lder continuity of $u$ in the
following lemma.
\begin{lem}
  When $t\in(0,T]$ and $\alpha\in(0,1)$, we have
    \begin{equation}
    <u(\cdot,t)>^\alpha\leq C\left(\|\rho\dot{u}\|_{L^2}^{\alpha}
        (\|\nabla
        u\|_{L^2}^{1-\alpha}+(C_0+C_f)^{\frac{1-\alpha}{2}})
    +\|\nabla u\|_{L^2}+(C_0+C_f)^\frac{1-\alpha}{2}
         \right).\label{2VDD-E2.35}
    \end{equation}
\end{lem}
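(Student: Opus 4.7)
\smallskip

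The plan is to reduce the Hölder seminorm of $u$ to an $L^p$ bound on $\nabla u$ via Morrey's embedding and then invoke the estimates of Lemma \ref{2VDD-L2.2}. Concretely, I would set $p=\frac{2}{1-\alpha}\in(2,\infty)$, so that $1-\frac{2}{p}=\alpha$, and use
\[
<u(\cdot,t)>^{\alpha}\leq C_\alpha\,\|\nabla u(\cdot,t)\|_{L^p},
\]
which is the Gagliardo--Nirenberg/Morrey inequality on $\mathbb{R}^2$. Next, applying (\ref{2VDD-E2.16-1}) with this $p$ gives
\[
\|\nabla u\|_{L^p}\leq C_p\bigl(\|F\|_{L^p}+\|w\|_{L^p}+\|P-P(\tilde{\rho})\|_{L^p}\bigr),
\]
so the task becomes bounding each of these three terms in the desired form.

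For the $F$ and $w$ terms I would directly plug the chosen $p$ into (\ref{2VDD-E2.16-3}). The key observation is that $\frac{p-2}{p}=\alpha$ and $\frac{2}{p}=1-\alpha$, so (\ref{2VDD-E2.16-3}) yields exactly
\[
\|F\|_{L^p}+\|w\|_{L^p}\leq C\bigl(\|\rho\dot u\|_{L^2}^{\alpha}(\|\nabla u\|_{L^2}^{1-\alpha}+\|P-P(\tilde\rho)\|_{L^2}^{1-\alpha})+\|\nabla u\|_{L^2}+\|f\|_{L^2}+\|P-P(\tilde\rho)\|_{L^2}\bigr).
\]
Then I would dispose of the pressure contributions using (\ref{2VDD-E2.1-1}) and Proposition \ref{2VDD-P2.1}: since $|P-P(\tilde{\rho})|\leq C|\rho-\tilde{\rho}|$ is bounded and $G$ dominates $(\rho-\tilde\rho)^2$, one has $\|P-P(\tilde\rho)\|_{L^2}^2\leq C\int G(\rho)\,dx\leq C(C_0+C_f)$, and for any $r\geq2$ the bound $\|P-P(\tilde\rho)\|_{L^r}^r\leq C\|P-P(\tilde\rho)\|_{L^2}^2\leq C(C_0+C_f)$ gives $\|P-P(\tilde\rho)\|_{L^r}\leq C(C_0+C_f)^{1/r}$. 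Applied to $r=p$ and to $r=2/(1-\alpha)$ (for the $(1-\alpha)$-power inside the bracket), this produces the $(C_0+C_f)^{(1-\alpha)/2}$ factors in the statement. The term $\|f\|_{L^2}$ is absorbed since $C_f$ controls $\|f\|_{L^2}^2$.

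Assembling everything and using $a+b\leq (a^{1-\alpha}+b^{1-\alpha})^{1/(1-\alpha)}$-type absorption where needed gives
\[
<u>^{\alpha}\leq C\Bigl(\|\rho\dot u\|_{L^2}^{\alpha}\bigl(\|\nabla u\|_{L^2}^{1-\alpha}+(C_0+C_f)^{\frac{1-\alpha}{2}}\bigr)+\|\nabla u\|_{L^2}+(C_0+C_f)^{\frac{1-\alpha}{2}}\Bigr),
\]
which is (\ref{2VDD-E2.35}). No step is genuinely hard: the only thing to be careful about is the exponent bookkeeping, i.e.\ checking that the exponents $\frac{p-2}{p}$ and $\frac{2}{p}$ coming out of (\ref{2VDD-E2.16-3}) line up with the $\alpha$ and $1-\alpha$ prescribed by Morrey's embedding, and that the $L^p$ norm of $P-P(\tilde\rho)$ admits the interpolation above. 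Everything else is a direct substitution into Lemma \ref{2VDD-L2.2} and Proposition \ref{2VDD-P2.1}.
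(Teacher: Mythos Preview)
Your proposal is correct and follows essentially the same route as the paper: set $p=\frac{2}{1-\alpha}$, use the Morrey/Sobolev embedding $<u>^{\alpha}\leq C\|\nabla u\|_{L^p}$, apply (\ref{2VDD-E2.16-1}) and (\ref{2VDD-E2.16-3}), and bound the pressure terms via Proposition~\ref{2VDD-P2.1} and the $L^\infty$ bound on $\rho$. The only cosmetic difference is that the paper writes the $L^p$ pressure bound as the interpolation $\|\rho-\tilde\rho\|_{L^p}\leq\|\rho-\tilde\rho\|_{L^2}^{2/p}\|\rho-\tilde\rho\|_{L^\infty}^{1-2/p}$, which is exactly your inequality $\|P-P(\tilde\rho)\|_{L^r}^r\leq C\|P-P(\tilde\rho)\|_{L^2}^2$ in different clothing (note also that your ``$r=p$'' and ``$r=2/(1-\alpha)$'' are the same number).
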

\begin{proof} Let  $p=\frac{2}{1-\alpha}$.
 From (\ref{2VDD-E2.16-1}),
  (\ref{2VDD-E2.16-3}) and Sobolev's
embedding theorem, we have
    \begin{eqnarray*}
    &&<u(\cdot,t)>^\alpha\\
        &\leq& C\|\nabla u\|_{L^p}\nonumber\\
    &\leq&C(\|F\|_{L^p}
          +\|w\|_{L^p}+\|P-P(\tilde{\rho})\|_{L^p}
          )\nonumber\\
    &\leq&C\left(\|\rho\dot{u}\|_{L^2}^{\frac{p-2}{p}}
        (\|\nabla
        u\|_{L^2}^{\frac{2}{p}}+\|P-P(\tilde{\rho})\|_{L^2}^{\frac{2}{p}})
    +\|\nabla u\|_{L^2}+\|f\|_{L^2}+\|P-P(\tilde{\rho})\|_{L^2}
         \right)\nonumber\\
            &&+C\|\rho-\tilde{\rho}\|_{L^2}^\frac{2}{p}
            \|\rho-\tilde{\rho}\|_{L^\infty}^{1-\frac{2}{p}}\\
    &\leq&C\left(\|\rho\dot{u}\|_{L^2}^{\frac{p-2}{p}}
        (\|\nabla
        u\|_{L^2}^{\frac{2}{p}}+(C_0+C_f)^{\frac{1}{p}})
    +\|\nabla u\|_{L^2}+(C_0+C_f)^\frac{1}{p}
         \right).
    \end{eqnarray*}
\end{proof}

\begin{prop}\label{2VDD-P2.5}
 If $u_0\in H^1$, $(\rho,u)$
  is a smooth solution of (\ref{2VDD-E1.1})--(\ref{2VDD-E1.2}) as
  in Proposition \ref{2VDD-P2.1}, then we have
    \begin{equation}
      \sup_{0\leq t\leq T}\int
      |\nabla
      u|^2dx+\int^T_0\int\rho|\dot{u}|^2dxdt
        \leq C(1+M_q).\label{2VDD-E2.59}
    \end{equation}
\end{prop}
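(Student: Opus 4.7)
The plan is to re-run the derivation of Lemma \ref{2VDD-L2.1} but without the time weight $\sigma$, paying for the removed weight with a boundary contribution at $t=0$ that is now controllable thanks to $u_0\in H^1$. Specifically, I multiply the momentum equation (\ref{2VDD-E1.1})$_2$ by $\dot u$ (instead of $\sigma\dot u$) and integrate over $\mathbb{R}^2\times[0,t]$, getting a decomposition analogous to (\ref{2VDD-E2.6}). In the pressure term, the same integration by parts as for $J_1$ produces the boundary term $[\int\mathrm{div}u\,(P-P(\tilde\rho))\,dx]_0^t$, whose value at $t=0$ is bounded by $C\|\nabla u_0\|_{L^2}^2+C\|\rho_0-\tilde\rho\|_{L^2}^2\leq C(M_q+C_0)$; in the Laplacian and $(\mu+\lambda)\mathrm{div}u$ terms, the analogues of $J_2$ and $J_3$ yield boundary contributions $\frac{\mu}{2}\int|\nabla u_0|^2dx$ and $\frac12\int(\lambda+\mu)|\mathrm{div}u_0|^2dx$, both directly dominated by $CM_q$; and the $J_4$-analogue is again absorbed by $\frac12\int_0^t\int\rho|\dot u|^2dxds$ plus $C\int_0^t\int f^2dxds\leq CC_f$.

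Collecting these, I obtain
\begin{equation*}
\sup_{0\leq t\leq T}\!\int|\nabla u|^2dx+\int_0^T\!\!\int\rho|\dot u|^2dxds
\leq C(1+M_q+C_0+C_f)+C\int_0^T\!\!\int|\nabla u|^3dxds.
\end{equation*}
To bound the cubic term I split it at $t=1$. For the piece on $[1,T]$, $\sigma\equiv1$, so this piece is dominated by $\int_0^T\int\sigma|\nabla u|^3dxds$, which by (\ref{2VDD-E2.32}) and Proposition~\ref{2VDD-P2.2} is at most $CA_1^2+C(C_0+C_f)\leq C(C_0+C_f)^\theta\leq C$. For the piece on $[0,1]$, I use (\ref{2VDD-E2.16-1}) and the 2-D Gagliardo--Nirenberg interpolation $\|g\|_{L^3}^3\leq C\|g\|_{L^2}^2\|\nabla g\|_{L^2}$ applied to $F$, $w$, and the pressure, together with (\ref{2VDD-E2.16-2})--(\ref{2VDD-E2.16-3}), to get
\begin{equation*}
\|\nabla u\|_{L^3}^3\leq C\bigl(\|\nabla u\|_{L^2}^2+\|f\|_{L^2}^2+1\bigr)\bigl(\|\rho\dot u\|_{L^2}+\|f\|_{L^2}\bigr)+C\|\rho-\tilde\rho\|_{L^3}^3,
\end{equation*}
and then Young's inequality yields
\begin{equation*}
\int_0^1\!\!\int|\nabla u|^3dxds\leq\varepsilon'\!\!\int_0^1\!\!\|\rho\dot u\|_{L^2}^2ds+C(\varepsilon')\Bigl(\sup_{[0,1]}\|\nabla u\|_{L^2}^2\Bigr)\int_0^1\!\!\|\nabla u\|_{L^2}^2ds+C(1+C_f).
\end{equation*}

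Setting $N:=\sup_{[0,T]}\|\nabla u\|_{L^2}^2+\int_0^T\int\rho|\dot u|^2dxds$, using $\int_0^T\int|\nabla u|^2dxds\leq C(C_0+C_f)$ from Proposition~\ref{2VDD-P2.1}, the smallness $C_0+C_f\leq\varepsilon$, and taking $\varepsilon'$ small, I obtain an inequality of the form $N\leq C(1+M_q)+(\varepsilon'+C\varepsilon)N$, which closes to $N\leq C(1+M_q)$. The main obstacle is precisely this closure for small time: unlike in Lemma \ref{2VDD-L2.1} and Proposition \ref{2VDD-P2.2}, where the $\sigma$ weight tames $\int|\nabla u|^3$ via $\sup\sigma\|\nabla u\|_{L^2}^2\leq A_1\leq(C_0+C_f)^\theta$, here $\sup\|\nabla u\|_{L^2}^2$ is exactly the quantity we are estimating, so the absorption must rely on interpolating the cubic term into $\|\rho\dot u\|_{L^2}^2$ and $\|\nabla u\|_{L^2}^4$ and exploiting the smallness of $C_0+C_f$ on the short interval $[0,1]$.
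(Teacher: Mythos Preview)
Your proposal is correct and follows essentially the same approach as the paper: re-run the unweighted analogue of Lemma~\ref{2VDD-L2.1}, split the cubic term at $t=1$, control the large-time piece via (\ref{2VDD-E2.22}) and (\ref{2VDD-E2.32}), and for the small-time piece pass to $\|F\|_{L^3}^3+\|w\|_{L^3}^3$ via (\ref{2VDD-E2.16-1})--(\ref{2VDD-E2.16-4}) and Gagliardo--Nirenberg. The only difference is in the closure: the paper, after reaching $N\leq C(1+M_q)+C\int_0^T\|\nabla u\|_{L^2}^4\,ds$, applies Gronwall's inequality (treating one factor $\|\nabla u\|_{L^2}^2$ as a known integrable weight with $\int_0^T\|\nabla u\|_{L^2}^2\,ds\leq C(C_0+C_f)$), whereas you absorb directly using the smallness of $C_0+C_f$; the two endgames are equivalent here.
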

\begin{proof}
  Using a similar argument as that in the proof of
  (\ref{2VDD-E2.4}), we have
    $$
      \sup_{0\leq t\leq T}\int|\nabla
      u|^2dx+\int^T_0\int\rho|\dot{u}|^2dxdt
            \leq C(C_0+C_f+M_q)+C\int^T_0\int|\nabla u|^3dxds.
   $$
Without loss of generality, assume  that $T>1$. From
(\ref{2VDD-E2.22}) and (\ref{2VDD-E2.32}), we get
        $$
      \sup_{0\leq t\leq T}\int|\nabla
      u|^2dx+\int^T_0\int\rho|\dot{u}|^2dxdt
            \leq C(1+M_q)+C\int^1_0\int|\nabla u|^3dxds.
   $$
  From
(\ref{2VDD-E2.16-1}) and (\ref{2VDD-E2.16-4}), we have
    $$
\sup_{0\leq t\leq T}\int|\nabla
      u|^2dx+\int^T_0\int\rho|\dot{u}|^2dxdt
            \leq C(1+M_q)+C\int^1_0\int(|F|^3+|w|^3)dxds.
    $$
  From
(\ref{2VDD-E2.16-2})--(\ref{2VDD-E2.16-3}) and (\ref{2VDD-E2.16}),
we obtain
    \begin{eqnarray}
        && \int\left(|F|^3+|w|^3\right)dx\nonumber\\
                &\leq&
      C\left(\int |F|^2dx
      \right)\left(\int |\nabla F|^2dx
      \right)^{\frac{1}{2}}+C\left(\int |w|^2dx
      \right)\left(\int |\nabla w|^2dx
      \right)^{\frac{1}{2}}\nonumber\\
            &\leq&C\int(|\nabla u|^2+|\rho-\tilde{\rho}|^2)dx
            \left(\int\left(\rho|\dot{u}|^2+|f|^2
            \right)dx\right)^\frac{1}{2}.
    \end{eqnarray}
Thus, from Proposition \ref{2VDD-P2.1}, we have
     \begin{eqnarray*}
    &&\sup_{0\leq t\leq T}\int|\nabla
      u|^2dx+\int^T_0\int\rho|\dot{u}|^2dxdt\nonumber\\
            &\leq& C(1+M_q)+C\int^1_0\left(
            \|\nabla u\|_{L^2}^2+C_0+C_f
            \right)
            \left(\|\sqrt{\rho}\dot{u}\|_{L^2}+\|f\|_{L^2}
            \right)ds\nonumber\\
            &\leq& C(1+M_q)+\frac{1}{2}\int^T_0\int\rho|\dot{u}|^2dxdt+C\int^T_0
            \|\nabla u\|_{L^2}^4ds.
     \end{eqnarray*}
Using Gronwall's inequality, we obtain
    $$
    \sup_{0\leq t\leq T}\int|\nabla
      u|^2dx+\int^T_0\int\rho|\dot{u}|^2dxdt
      \leq C(1+M_q)e^{\int^T_0
            \|\nabla u\|_{L^2}^2ds}
      \leq C(1+M_q).
    $$
\end{proof}

\begin{prop}\label{2VDD-P2.6}
 If $u_0\in H^1$, $(\rho,u)$
  is a smooth solution of (\ref{2VDD-E1.1})--(\ref{2VDD-E1.2}) as
  in Proposition \ref{2VDD-P2.1}, then we have
    \begin{equation}
      \sup_{0<t\leq T}\sigma\int\rho|\dot{u}|^2dx
      +\int^T_0\int\sigma|\nabla \dot{u}|^2dxdt
        \leq C(1+M_q).\label{2VDD-E2.62}
    \end{equation}
\end{prop}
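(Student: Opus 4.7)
My plan is to mirror the derivation of the $\sigma^2$-weighted estimate (\ref{2VDD-E2.5}) with the weight $\sigma^2$ replaced throughout by $\sigma$, and then close the resulting inequality by using Proposition \ref{2VDD-P2.5} (in place of the a priori $A_1$-control) together with the $L^p$-elliptic estimates of Lemma \ref{2VDD-L2.2}.

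First, I apply the operator $\partial_t+\mathrm{div}(u\cdot)$ to the momentum equation (\ref{2VDD-E1.1})$_2$, multiply by $\sigma\dot u$, and integrate over $\mathbb R^2\times[0,t]$ to produce an identity of the same shape as (\ref{2VDD-E2.11}) with $\sigma$ in place of $\sigma^2$:
$$\frac{\sigma}{2}\int\rho|\dot u|^2\,dx=\sum_{i=1}^5 \tilde K_i.$$
The only structurally new term is the boundary contribution from $\sigma'=\mathbf 1_{[0,1]}(t)$, namely $\int_0^{t\wedge 1}\!\int\rho|\dot u|^2\,dxds$, which by Proposition \ref{2VDD-P2.5} is bounded by $C(1+M_q)$. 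The terms $\tilde K_2,\tilde K_3,\tilde K_4$ are handled as in the proof of (\ref{2VDD-E2.5}): integration by parts yields the dissipation $-\tfrac{\mu}{2}\int_0^t\!\int\sigma|\nabla\dot u|^2-\int_0^t\!\int\sigma(\lambda+\mu)|\tfrac{D}{Dt}\mathrm{div}\,u|^2$ on the left, plus error terms absorbed into the LHS or dominated by $C\int_0^t\!\int\sigma|\nabla u|^4\,dxds+C(C_0+C_f)^{1/2}$. For $\tilde K_5$, Young's inequality in the form $\sigma\dot u\cdot\rho f_t\le\tfrac12\rho|\dot u|^2+C\sigma^2|f_t|^2$ (and analogously for $\sigma\dot u\cdot\rho u\cdot\nabla f$) reduces this to quantities bounded by $C(1+M_q)$ via Proposition \ref{2VDD-P2.5} and the definition (\ref{2VDD-E1.10}) of $M_q$.

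The problem therefore reduces to controlling $\int_0^T\!\int\sigma|\nabla u|^4\,dxds$. By the elliptic decomposition (\ref{2VDD-E2.16-1}) and the two-dimensional Ladyzhenskaya inequality,
$$\|\nabla u\|_{L^4}^4\le C\bigl(\|F\|_{L^2}^2\|\nabla F\|_{L^2}^2+\|w\|_{L^2}^2\|\nabla w\|_{L^2}^2+\|P-P(\tilde\rho)\|_{L^4}^4\bigr).$$
By (\ref{2VDD-E2.16-2})--(\ref{2VDD-E2.16-3}), Proposition \ref{2VDD-P2.1}, and Proposition \ref{2VDD-P2.5}, I obtain $\sup_t(\|F\|_{L^2}^2+\|w\|_{L^2}^2)\le C(1+M_q)$, while $\int_0^T\!\int\sigma(|\nabla F|^2+|\nabla w|^2)\,dxdt\le C\int_0^T\!\int(\rho|\dot u|^2+|f|^2)\,dxdt\le C(1+M_q)$, and the pressure term is controlled via (\ref{2VDD-E2.16-4}). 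Hence $\int_0^T\!\int\sigma|\nabla u|^4\,dxds\le C(1+M_q)^2\le C(1+M_q)$, the last step absorbing the polynomial dependence on $M\ge M_q$ into the constant permitted by Remark 1.1. Collecting all estimates and choosing $\varepsilon$ small enough to absorb the $\varepsilon\int\sigma|\nabla\dot u|^2$ contribution into the left-hand side yields (\ref{2VDD-E2.62}).

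The main obstacle is the boundary contribution produced by $\sigma'$: because $\sigma$ vanishes only linearly at $t=0$ rather than quadratically as with $\sigma^2$, the standard $A_1$-bootstrap used for (\ref{2VDD-E2.5}) does not directly apply, and I must control the \emph{unweighted} quantity $\int_0^1\!\int\rho|\dot u|^2$. This is precisely where the hypothesis $u_0\in H^1$, encoded in $M_q$ through the term $\int|\nabla u_0|^2\,dx$, enters through the a priori bound of Proposition \ref{2VDD-P2.5}, turning what would otherwise be a divergent degradation of the $\sigma$-weight into a finite estimate.
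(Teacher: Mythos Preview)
Your proposal is correct and follows essentially the same approach as the paper: both replace $\sigma^2$ by $\sigma$ in the derivation of (\ref{2VDD-E2.5}), invoke Proposition \ref{2VDD-P2.5} to control the unweighted term $\int_0^1\!\int\rho|\dot u|^2$ arising from $\sigma'$, and then reduce matters to bounding $\int_0^T\!\int\sigma(|u|^4+|\nabla u|^4)$ via the elliptic estimates of Lemma \ref{2VDD-L2.2}. The only minor difference is in the closing step: the paper separates $[1,T]$ (handled by (\ref{2VDD-E2.31})) from $[0,1]$, writes the remaining $[0,1]$-contribution as $\int_0^1\|\nabla u\|_{L^2}^2\,\sigma\!\int\rho|\dot u|^2\,dx\,ds$, and applies Gronwall, whereas you bound everything directly by $C(1+M_q)^2$ and absorb one factor of $(1+M_q)\le(1+M)$ into the $M$-dependent constant permitted by the paper's conventions. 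Both closures are valid and yield the same conclusion.
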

\begin{proof}
  Using a similar argument as that in the proof of
  (\ref{2VDD-E2.5}), from (\ref{2VDD-E2.59}), we have
    $$
      \sup_{0<t\leq T}\sigma\int\rho|\dot{u}|^2dx
      +\int^T_0\int\sigma|\nabla \dot{u}|^2dxdt
            \leq C(1+M_q)+C\int^T_0\int\sigma\left(|u|^4+|\nabla u|^4\right)dxds.
   $$
Without loss of generality, assume that $T>1$. From
(\ref{2VDD-E2.22}) and (\ref{2VDD-E2.31}), we get
        $$
   \sup_{0<t\leq T}\sigma\int\rho|\dot{u}|^2dx
      +\int^T_0\int\sigma|\nabla \dot{u}|^2dxdt
            \leq C(1+M_q)+C\int^1_0\int\sigma\left(|u|^4+|\nabla u|^4\right)dxds.
   $$
  From (\ref{2VDD-E2.1}),
(\ref{2VDD-E2.16-0})--(\ref{2VDD-E2.16-1}), (\ref{2VDD-E2.16-4}) and
(\ref{2VDD-E2.59}), we have
    $$
\sup_{0<t\leq T}\sigma\int\rho|\dot{u}|^2dx
      +\int^T_0\int\sigma|\nabla \dot{u}|^2dxdt
            \leq C(M_q)+C\int^1_0\int\sigma(|F|^4+|w|^4)dxds.
    $$
  From (\ref{2VDD-E2.1}),
(\ref{2VDD-E2.16-2}), (\ref{2VDD-E2.16}) and (\ref{2VDD-E2.59}),
we obtain
    \begin{eqnarray}
        && \int^1_0\int\sigma\left(|F|^4+|w|^4\right)dxds\nonumber\\
                &\leq&
      C\int^1_0\sigma\left(\int |F|^2dx
      \right)\left(\int |\nabla F|^2dx
      \right)ds+
      C\int^1_0\sigma\left(\int |w|^2dx
      \right)\left(\int |\nabla w|^2dx
      \right)ds\nonumber\\
            &\leq&C\int^1_0\sigma\left(\int(|\nabla
            u|^2+|\rho-\tilde{\rho}|^2)dx\right)
            \left(\int\left(\rho|\dot{u}|^2+|f|^2
            \right)dx\right)ds\nonumber\\
                &\leq&C(1+M_q)+\int^1_0\|\nabla u\|_{L^2}^2\sigma\int\rho|\dot{u}|^2dxds.
    \end{eqnarray}
Using Gronwall's inequality, we can finish the proof of this
proposition.
\end{proof}

\begin{lem} For any $p\in[2,\infty)$, we have
    \begin{equation}
     \|\dot{u}\|_{L^p}\leq C_p\|\sqrt{\rho}\dot{u}\|_{L^2}^{\frac{2}{p}}
     \|\nabla\dot{u}\|_{L^2}^{1-\frac{2}{p}}+C_p\|\nabla\dot{u}\|_{L^2}.\label{2VDD-E2.64-0}
    \end{equation}
\end{lem}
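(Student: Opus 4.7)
The inequality is a Gagliardo-Nirenberg type bound adapted to a density that may vanish. My plan is to first establish the endpoint case $p=2$, namely $\|\dot u\|_{L^2}\le C(\|\sqrt\rho\,\dot u\|_{L^2}+\|\nabla\dot u\|_{L^2})$, and then interpolate via the standard 2-D Sobolev embedding.

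For the base case I would start from the identity
$$\tilde\rho\int|\dot u|^2\,dx=\int\rho|\dot u|^2\,dx+\int(\tilde\rho-\rho)|\dot u|^2\,dx,$$
and bound the last integral by H\"older's inequality followed by the 2-D Gagliardo-Nirenberg embedding $\|\dot u\|_{L^4}^2\le C\|\dot u\|_{L^2}\|\nabla\dot u\|_{L^2}$, obtaining
$$\int(\tilde\rho-\rho)|\dot u|^2\,dx\le C\|\rho-\tilde\rho\|_{L^2}\|\dot u\|_{L^2}\|\nabla\dot u\|_{L^2}.$$
Choosing $g(\rho)=(\rho-\tilde\rho)^2$ in the pointwise comparison $|g(\rho)|\le CG(\rho)$ recorded just below (\ref{2VDD-E1.5}) yields $\|\rho-\tilde\rho\|_{L^2}^2\le C\int G(\rho)\,dx\le C(C_0+C_f)$ by Proposition \ref{2VDD-P2.1}. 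Young's inequality then absorbs $\|\dot u\|_{L^2}^2$ into the left-hand side and produces the endpoint bound.

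With $p=2$ secured, I would apply the classical 2-D Gagliardo-Nirenberg interpolation $\|\dot u\|_{L^p}\le C_p\|\dot u\|_{L^2}^{2/p}\|\nabla\dot u\|_{L^2}^{1-2/p}$ for $p\in[2,\infty)$ and substitute. Since $2/p\le 1$, the elementary subadditivity $(a+b)^{2/p}\le a^{2/p}+b^{2/p}$ splits the product into exactly the two terms appearing in (\ref{2VDD-E2.64-0}).

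The only delicate issue is the vacuum: the naive identity $\dot u=\rho^{-1/2}(\sqrt\rho\,\dot u)$ breaks down where $\rho$ vanishes, so one cannot directly trade $\dot u$ for $\sqrt\rho\,\dot u$ in $L^2$. The splitting trick above circumvents this by exploiting the positive far-field density $\tilde\rho>0$ together with the global $L^2$ control on $\rho-\tilde\rho$ coming from the potential energy $G(\rho)$; this is the only place where the behavior at spatial infinity enters the argument.
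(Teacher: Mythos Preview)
Your proof is correct and follows essentially the same route as the paper: both start from the identity $\tilde\rho\int|\dot u|^2\,dx=\int\rho|\dot u|^2\,dx+\int(\tilde\rho-\rho)|\dot u|^2\,dx$, bound the remainder by H\"older and the 2-D Gagliardo--Nirenberg inequality $\|\dot u\|_{L^4}^2\le C\|\dot u\|_{L^2}\|\nabla\dot u\|_{L^2}$ together with the energy control on $\|\rho-\tilde\rho\|_{L^2}$, absorb to obtain $\|\dot u\|_{L^2}^2\le C\|\sqrt\rho\,\dot u\|_{L^2}^2+C\|\nabla\dot u\|_{L^2}^2$, and then interpolate via (\ref{2VDD-E2.16}). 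The only cosmetic difference is that the paper cites Proposition~\ref{2VDD-P2.6} where you (more directly) invoke Proposition~\ref{2VDD-P2.1} for the bound on $\|\rho-\tilde\rho\|_{L^2}$.
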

\begin{proof}
  Since
    $$
    \tilde{\rho}\int|\dot{u}|^2dx\leq \int\rho |\dot{u}|^2dx+
    \left(\int|\rho-\tilde{\rho}|^2
    dx    \right)^\frac{1}{2}
    \left(\int|\dot{u}|^4
    dx    \right)^\frac{1}{2},
    $$
applying (\ref{2VDD-E2.16}) and Proposition \ref{2VDD-P2.6}, we get
         $$
    \|\dot{u}\|_{L^2}^2\leq
    C\|\sqrt{\rho}\dot{u}\|_{L^2}^2+C\|\nabla \dot{u}\|_{L^2}^2.
    $$
From (\ref{2VDD-E2.16}), we can immediately obtain
(\ref{2VDD-E2.64-0}).
\end{proof}

\begin{lem}
  For any $q\in (0,2)$, we have
    \begin{equation}
      \int^T_0\int\sigma^{\frac{2+q}{2}}\rho|\dot{u}|^{2+q}dxds
      \leq C(1+M_q^{1+\frac{q}{2}}).\label{2VDD-E2.65}
    \end{equation}
\end{lem}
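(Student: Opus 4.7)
My plan is to combine the interpolation estimate (\ref{2VDD-E2.64-0}) with the a priori bounds of Propositions \ref{2VDD-P2.5} and \ref{2VDD-P2.6}, and close the estimate by H\"older's inequality in time. Setting $p=2+q$ in (\ref{2VDD-E2.64-0}) and using $\rho\le\bar\rho$ yields the pointwise-in-time bound
\begin{equation*}
\int\rho|\dot u|^{2+q}\,dx\le C\|\sqrt\rho\dot u\|_{L^2}^{2}\|\nabla\dot u\|_{L^2}^{q}+C\|\nabla\dot u\|_{L^2}^{2+q}.
\end{equation*}
Multiplying by $\sigma^{(2+q)/2}$ and integrating over $[0,T]$ reduces the claim to controlling
\begin{equation*}
\mathrm{I}:=\int_0^T\bigl(\sigma\|\sqrt\rho\dot u\|_{L^2}^{2}\bigr)\bigl(\sigma\|\nabla\dot u\|_{L^2}^{2}\bigr)^{q/2}dt\quad\text{and}\quad \mathrm{II}:=\int_0^T\bigl(\sigma\|\nabla\dot u\|_{L^2}^{2}\bigr)^{(2+q)/2}dt,
\end{equation*}
each by $C(1+M_q)^{1+q/2}$.

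For $\mathrm{I}$, I would apply H\"older in time with exponents $\tfrac{2}{2-q}$ and $\tfrac{2}{q}$. The second H\"older factor gives $\bigl(\int_0^T\sigma\|\nabla\dot u\|_{L^2}^2\,dt\bigr)^{q/2}\le C(1+M_q)^{q/2}$ via Proposition \ref{2VDD-P2.6}. For the first factor I write $(\sigma\|\sqrt\rho\dot u\|_{L^2}^2)^{2/(2-q)}=(\sigma\|\sqrt\rho\dot u\|_{L^2}^2)^{q/(2-q)}\cdot(\sigma\|\sqrt\rho\dot u\|_{L^2}^2)$, pull out $(\sup_t\sigma\|\sqrt\rho\dot u\|_{L^2}^2)^{q/(2-q)}\le C(1+M_q)^{q/(2-q)}$ via Proposition \ref{2VDD-P2.6}, and bound the remaining $\int_0^T\sigma\|\sqrt\rho\dot u\|_{L^2}^2\,dt\le\int_0^T\!\int\rho|\dot u|^2\,dxdt\le C(1+M_q)$ via Proposition \ref{2VDD-P2.5}. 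Raising to the power $\tfrac{2-q}{2}$ gives $C(1+M_q)$, and multiplying the two H\"older factors yields $\mathrm{I}\le C(1+M_q)^{1+q/2}$.

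The main obstacle is $\mathrm{II}$, since only an $L^1_t$ bound on $\sigma\|\nabla\dot u\|_{L^2}^2$ is directly available from Proposition \ref{2VDD-P2.6}, whereas $\mathrm{II}$ is an $L^{(2+q)/2}_t$-type quantity. To handle it I would sharpen (\ref{2VDD-E2.64-0}) by tracking the factor $\|\rho-\tilde\rho\|_{L^2}^2\le C(C_0+C_f)\le C\varepsilon$ (available from Proposition \ref{2VDD-P2.1}) arising in the underlying inequality $\|\dot u\|_{L^2}^2\le C\|\sqrt\rho\dot u\|_{L^2}^2+C\|\rho-\tilde\rho\|_{L^2}^2\|\nabla\dot u\|_{L^2}^2$, so that the offending $\|\nabla\dot u\|_{L^2}^{2+q}$ term carries a small prefactor $\varepsilon$. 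Using the factorization $(\sigma\|\nabla\dot u\|_{L^2}^2)^{(2+q)/2}=(\sigma\|\nabla\dot u\|_{L^2}^2)\cdot(\sigma\|\nabla\dot u\|_{L^2}^2)^{q/2}$ together with a H\"older step in time matching the high-power factor against $\int_0^T\sigma\|\nabla\dot u\|_{L^2}^2\,dt\le C(1+M_q)$, the $\varepsilon$-smallness absorbs the resulting contribution into the main estimate, giving the desired $C(1+M_q^{1+q/2})$. Verifying that the interpolation exponents indeed match and that no spurious power of $\varepsilon^{-1}$ appears is the delicate point of the argument.
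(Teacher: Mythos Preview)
Your handling of $\mathrm{I}$ is correct and coincides with the paper's treatment of the corresponding term. The gap is in $\mathrm{II}$. The $\varepsilon$-smallness you extract in front of $\|\nabla\dot u\|_{L^2}^{2+q}$ is real, but it does not help: there is no $\|\nabla\dot u\|$-type quantity on the left-hand side of (\ref{2VDD-E2.65}) into which this term could be absorbed, and no pointwise-in-time bound on $\sigma\|\nabla\dot u\|_{L^2}^2$ is available (Propositions \ref{2VDD-P2.5}--\ref{2VDD-P2.6} give only $\int_0^T\sigma\|\nabla\dot u\|_{L^2}^2\,dt$ and $\sup_t\sigma\|\sqrt\rho\dot u\|_{L^2}^2$). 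Your proposed factorization $(\sigma\|\nabla\dot u\|_{L^2}^2)^{(2+q)/2}=(\sigma\|\nabla\dot u\|_{L^2}^2)\cdot(\sigma\|\nabla\dot u\|_{L^2}^2)^{q/2}$ followed by a H\"older step in time only produces factors with exponents $>1$ on $\sigma\|\nabla\dot u\|_{L^2}^2$, which is worse; and $\varepsilon$ multiplied by an uncontrolled quantity proves nothing.

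The paper sidesteps this by \emph{not} discarding the weight $\rho$ at the outset. Instead of $\int\rho|\dot u|^{2+q}\le\bar\rho\,\|\dot u\|_{L^{2+q}}^{2+q}$, it applies H\"older directly in $x$ with $p=\frac{4}{2-q}$ to obtain
\[
\int\rho|\dot u|^{2+q}\,dx\le C\,\|\sqrt\rho\dot u\|_{L^2}^{\,q}\,\|\dot u\|_{L^p}^{\,2},
\]
and then uses (\ref{2VDD-E2.64-0}) with this $p$. Since $2/p=1-q/2$, this yields
\[
\int\rho|\dot u|^{2+q}\,dx\le C\,\|\sqrt\rho\dot u\|_{L^2}^{2}\|\nabla\dot u\|_{L^2}^{q}
+C\,\|\sqrt\rho\dot u\|_{L^2}^{q}\|\nabla\dot u\|_{L^2}^{2}.
\]
The second term, multiplied by $\sigma^{(2+q)/2}$, equals $(\sigma\|\sqrt\rho\dot u\|_{L^2}^2)^{q/2}(\sigma\|\nabla\dot u\|_{L^2}^2)$ and is bounded by $(\sup_t\sigma\|\sqrt\rho\dot u\|_{L^2}^2)^{q/2}\int_0^T\sigma\|\nabla\dot u\|_{L^2}^2\,dt\le C(1+M_q)^{1+q/2}$. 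The point is that retaining a factor of $\|\sqrt\rho\dot u\|_{L^2}$ in the ``bad'' term lets you trade it against the available sup bound from Proposition \ref{2VDD-P2.6}; once you replace it by $\|\nabla\dot u\|_{L^2}$, as your $\mathrm{II}$ does, that leverage is lost.
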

\begin{proof} Let $p=\frac{4}{2-q}$. Using H\"{o}lder's inequality, (\ref{2VDD-E2.62}) and
(\ref{2VDD-E2.64-0}), we have
    \begin{eqnarray*}
    &&\int^T_0\int\sigma^{\frac{2+q}{2}}\rho|\dot{u}|^{2+q}dxds\\
        &\leq&C\int^T_0\sigma^{\frac{2+q}{2}}\|\sqrt{\rho}\dot{u}\|_{L^2}^\frac{2(p-2-q)}{p-2}
        \|\dot{u}\|_{L^p}^{\frac{qp}{p-2}}ds\\
         &\leq&C\int^T_0\sigma^{\frac{2+q}{2}}\|\sqrt{\rho}\dot{u}\|_{L^2}^\frac{2(p-2-q)}{p-2}
        \left(\|\sqrt{\rho}\dot{u}\|_{L^2}^{\frac{2}{p}}
     \|\nabla\dot{u}\|_{L^2}^{1-\frac{2}{p}}+\|\nabla\dot{u}\|_{L^2}
        \right)^{\frac{qp}{p-2}}ds\\
            &\leq& C\left(\int^T_0\sigma\|\nabla \dot{u}\|_{L^2}^2dt
            \right)^\frac{q}{2}
            \left(\int^T_0\|\sqrt{\rho} \dot{u}\|_{L^2}^2dt
            \right)^\frac{2-q}{2}
            \left(\sup_{t\in[0,T]}\sigma\|\sqrt{\rho} \dot{u}\|_{L^2}^2
            \right)^\frac{q}{2}\\
        && +C\left(\int^T_0\sigma\|\nabla \dot{u}\|_{L^2}^2dt
            \right)^\frac{qp}{2(p-2)}
            \left(\int^T_0\|\sqrt{\rho} \dot{u}\|_{L^2}^2dt
            \right)^\frac{2p-4-qp}{2(p-2)}
            \left(\sup_{t\in[0,T]}\sigma\|\sqrt{\rho} \dot{u}\|_{L^2}^2
            \right)^\frac{q}{2}\\
                &\leq& C(1+M_q^{1+\frac{q}{2}}).
    \end{eqnarray*}
\end{proof}

\begin{prop}\label{2VDD-P2.7}
 If $u_0\in H^1$, $(\rho,u)$
  is a smooth solution of (\ref{2VDD-E1.1})--(\ref{2VDD-E1.2}) as
  in Proposition \ref{2VDD-P2.6}, $q\in (0,2)$ and
    \begin{equation}
      q^2<\frac{4\mu}{\lambda(\rho)+\mu},
     \ \forall\ \rho\in[0,\bar{\rho}],\label{2VDD-E2.64}
    \end{equation}
  then we have
    \begin{equation}
      \sup_{0<t\leq T}\sigma^{2+\frac{q}{2}}\int\rho|\dot{u}|^{2+q}dx
      +\int^T_0\int\sigma^{2+\frac{q}{2}}|\dot{u}|^q|\nabla \dot{u}|^2dxdt
        \leq C(M_q).\label{2VDD-E2.66}
    \end{equation}
\end{prop}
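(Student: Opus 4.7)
The plan is to mimic the proof of (\ref{2VDD-E2.5}), using the test function $\sigma^{2+q/2}|\dot u|^{q}\dot u$ in place of $\sigma^{2}\dot u$. Specifically, I apply the operator $\partial_{t}+\mathrm{div}(u\,\cdot)$ to the momentum equation (\ref{2VDD-E1.1})$_{2}$, take the inner product with $\sigma^{2+q/2}|\dot u|^{q}\dot u^{j}$, and integrate on $\mathbb{R}^{2}\times[0,t]$. Using mass conservation in the form $(\partial_{t}+\mathrm{div}(u\,\cdot))(\rho\phi)=\rho\dot\phi$ together with $v^{j}\dot v^{j}|v|^{q}=\frac{1}{q+2}\frac{D}{Dt}|v|^{q+2}$ (writing $v=\dot u$), the left-hand side yields
\[
\frac{\sigma^{2+q/2}(t)}{q+2}\int\rho|\dot u|^{2+q}dx-\frac{2+q/2}{q+2}\int_{0}^{t}\sigma^{1+q/2}\sigma'\int\rho|\dot u|^{2+q}dxds,
\]
and the boundary contribution at $s=0$ vanishes because $\sigma(0)=0$. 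The second term above is absorbed immediately using the preceding bound (\ref{2VDD-E2.65}).

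Following (\ref{2VDD-E2.11})--(\ref{2VDD-E2.15}), the right-hand side splits into five terms $K_{1}',\ldots,K_{5}'$ corresponding to the pressure, the $\mu\Delta u$ piece, the $\nabla((\lambda+\mu)\mathrm{div}u)$ piece, and the external force, with $\sigma^{2+q/2}|\dot u|^{q}\dot u^{j}$ replacing $\sigma^{2}\dot u^{j}$ throughout. The principal dissipative contributions arise from $K_{3}'$ and $K_{4}'$. Substituting $u_{t}=v-u\cdot\nabla u$ and $\frac{D}{Dt}\mathrm{div}u=\mathrm{div}v-\partial_{i}u^{j}\partial_{j}u^{i}$, and integrating by parts twice in $x$, the leading-order integrand equals
\[
\mu\bigl[|v|^{q}|\nabla v|^{2}+q|v|^{q}|\nabla|v||^{2}\bigr]+(\lambda+\mu)\bigl[|v|^{q}|\mathrm{div}v|^{2}+q|v|^{q-1}(v\cdot\nabla|v|)\mathrm{div}v\bigr],
\]
plus remainders of the schematic type $|v|^{q}|\nabla u|^{2}|\nabla v|$ and $|v|^{q}|\nabla u|^{4}$ (collected in an $O_{4}'$ analogous to $O_{4}$), plus pressure and external-force contributions $K_{1}'$, $K_{5}'$ directly mimicking (\ref{2VDD-E2.15}).

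The key algebraic step, which I expect to be the main obstacle, is to show that the bracketed integrand above is bounded below by $\delta|v|^{q}|\nabla v|^{2}$ for some $\delta=\delta(\mu,\lambda,q)>0$. Using $|v\cdot\nabla|v||\leq|v||\nabla|v||$, the cross term is bounded in absolute value by $q(\lambda+\mu)|v|^{q}XY$ with $X=|\mathrm{div}v|$, $Y=|\nabla|v||$; viewed as a quadratic form $q\mu Y^{2}-q(\lambda+\mu)XY+(\lambda+\mu)X^{2}$, the condition (\ref{2VDD-E2.64}) forces its discriminant to be strictly negative, which together with the extra $\mu|v|^{q}|\nabla v|^{2}$ gives the stated lower bound. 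The remaining error terms $K_{1}'$, $K_{5}'$ and $O_{4}'$ are estimated by a small multiple of this dissipation plus quantities of the form $\|\nabla u\|_{L^{2}}^{2}\sigma\int\rho|v|^{2+q}dx$ and data-driven terms encoded in $M_{q}$ (involving $\|f\|_{L^{2+q}}$, $\|\nabla f\|_{L^{4+2q}}$, $\|f_{t}\|_{L^{2+q}}$), using the uniform bounds on $\rho$ and $|\rho-\tilde\rho|$, Propositions \ref{2VDD-P2.5}--\ref{2VDD-P2.6}, the interpolation (\ref{2VDD-E2.64-0}), and the elliptic/density bounds (\ref{2VDD-E2.16-1})--(\ref{2VDD-E2.16-4}). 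A Gronwall argument applied to $t\mapsto\sigma^{2+q/2}(t)\int\rho|\dot u|^{2+q}dx$, with the integrability of $\int_0^T\|\nabla u\|_{L^2}^2ds$ coming from Proposition \ref{2VDD-P2.1}, then closes the estimate and yields (\ref{2VDD-E2.66}).
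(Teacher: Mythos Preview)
Your overall plan is the paper's: apply $\partial_t+\mathrm{div}(u\,\cdot)$ to the momentum equation, test against $\sigma^{2+q/2}|\dot u|^{q}\dot u$, and extract the dissipation from the viscous terms. Two points need correction.

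\medskip
\textbf{The discriminant claim is false as written.} With $X=|\mathrm{div}v|$ and $Y=|\nabla|v||$, the discriminant of $q\mu Y^{2}-q(\lambda+\mu)XY+(\lambda+\mu)X^{2}$ equals $q^{2}(\lambda+\mu)^{2}-4q\mu(\lambda+\mu)$, which is negative iff $q<4\mu/(\lambda+\mu)$. Hypothesis (\ref{2VDD-E2.64}) says $q^{2}<4\mu/(\lambda+\mu)$; for $q<1$ and $\lambda>3\mu$ this does \emph{not} imply your condition (e.g.\ $\lambda=9\mu$, $q=0.5$). The paper avoids this by the cruder bound $|\nabla|v||\le|\nabla v|$ in the cross term and absorbs against $\mu|\nabla v|^{2}+(\lambda+\mu)X^{2}$; that discriminant is $q^{2}(\lambda+\mu)^{2}-4\mu(\lambda+\mu)$, and (\ref{2VDD-E2.64}) makes it negative. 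This is a one-line fix, but your stated implication is incorrect.

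\medskip
\textbf{The closure is not a Gronwall argument.} After Young's inequality the residuals are
\[
\int_{0}^{T}\!\!\int\sigma^{2+\frac{q}{2}}\Bigl(|\dot u|^{q}|\nabla u|^{2}+|\dot u|^{q}|\nabla u|^{4}+|u|^{4+2q}\Bigr)dxds,
\]
and none of these has the form $\|\nabla u\|_{L^{2}}^{2}\,\sigma^{2+q/2}\!\int\rho|\dot u|^{2+q}dx$ you claim. The paper does \emph{not} set up a differential inequality here; instead it bounds each of these space--time integrals directly by $C(M_{q})$ through H\"older in $x$, the interpolation (\ref{2VDD-E2.64-0}), the elliptic bounds (\ref{2VDD-E2.16-1})--(\ref{2VDD-E2.16-4}) for $\|\nabla u\|_{L^{p}}$ via $F,w,\rho-\tilde\rho$, and then H\"older in $t$ against the already-known quantities $\int_{0}^{T}\sigma\|\nabla\dot u\|_{L^{2}}^{2}ds$ and $\int_{0}^{T}\|\sqrt\rho\,\dot u\|_{L^{2}}^{2}ds$ from Propositions \ref{2VDD-P2.5}--\ref{2VDD-P2.6}. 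This chain of interpolations is where the real work of the proof lies, and it is not captured by a Gronwall step; you should expect several lines of careful exponent bookkeeping rather than a single absorption.
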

\begin{proof}
Using a similar argument as that in the proof of (\ref{2VDD-E2.11}),
we have
   \begin{eqnarray}
      &&\frac{1}{2+q}\sigma^{2+\frac{q}{2}}\int\rho|\dot{u}|^{2+q}dx\nonumber\\
            &=&\int^t_0\int\{\frac{4+q}{4+2q}\sigma^{1+\frac{q}{2}}
            \sigma'\rho|\dot{u}|^{2+q}-\sigma^{2+\frac{q}{2}}|\dot{u}|^q\dot{u}^j\left(
            \partial_j P_t+\mathrm{div}(\partial_j Pu)\right)\nonumber\\
                &&
            +\mu\sigma^{2+\frac{q}{2}}|\dot{u}|^q\dot{u}^j\left(\Delta u^j_t+\mathrm{div}(u\Delta
            u^j)\right)\nonumber\\
                &&+\sigma^{2+\frac{q}{2}}|\dot{u}|^q\dot{u}^j\left(
                \partial_j\partial_t((\lambda+\mu)\mathrm{div}u)
                +\mathrm{div}(u\partial_j((\lambda+\mu)\mathrm{div}u))\right)\nonumber\\
                    &&
                +\sigma^{2+\frac{q}{2}}|\dot{u}|^q\dot{u}^j\left((\rho f^j)_t+\mathrm{div}(u\rho f^j)
                \right)\}dxds\nonumber\\
                &:=&\sum^5_{i=1}H_i.\label{2VDD-E2.67}
    \end{eqnarray}
Using the integration by parts and H\"{o}lder's inequality, we have
    \begin{eqnarray}
      H_2&=&-\int^t_0\int\sigma^{2+\frac{q}{2}}|\dot{u}|^q\dot{u}^j\left(
            \partial_j P_t+\mathrm{div}(\partial_j
            Pu)\right)dxds\nonumber\\
                &=&\int^t_0\int\sigma^{2+\frac{q}{2}}|\dot{u}|^q\left(\partial_j\dot{u}^jP'\rho_t
                +\partial_k\dot{u}^j\partial_jP
                u^k\right)dxds\nonumber\\
                    &&+\int^t_0\int\sigma^{2+\frac{q}{2}}\left(\partial_j|\dot{u}|^q
                    \dot{u}^jP'\rho_t
                +\partial_k|\dot{u}|^q\dot{u}^j\partial_jP
                u^k\right)dxds\nonumber\\
      &=&\int^t_0\int\sigma^{2+\frac{q}{2}}\left(
        -|\dot{u}|^qP'\rho\mathrm{div}u\partial_j\dot{u}^j+\partial_k(|\dot{u}|^q\partial_j\dot{u}^ju^k)P
      -P\partial_j(|\dot{u}|^q\partial_k\dot{u}^ju^k)\right)dxds\nonumber\\
        &&+\int^t_0\int\sigma^{2+\frac{q}{2}}\left(
        -\partial_j|\dot{u}|^q\dot{u}^jP'\rho\mathrm{div}u
        +P\partial_k(\partial_j|\dot{u}|^q\dot{u}^ju^k)
                                -P\partial_j(\partial_k|\dot{u}|^q\dot{u}^ju^k)
                \right)dxds \nonumber\\
        &=&\int^t_0\int\sigma^{2+\frac{q}{2}}\left(
        -|\dot{u}|^{q}P'\partial_j\dot{u}^j\rho\mathrm{div}u
        +P\partial_k|\dot{u}|^q\partial_j\dot{u}^ju^k+P|\dot{u}|^q\partial_j\dot{u}^j\mathrm{div}u\right.\nonumber\\
            &&-P\partial_j|\dot{u}|^q\partial_k\dot{u}^ju^k-P|\dot{u}|^q\partial_k\dot{u}^j\partial_ju^k
            -\partial_j|\dot{u}|^q\dot{u}^jP'\rho\mathrm{div}u\nonumber\\
                &&\left.+P\partial_k(\dot{u}^ju^k)\partial_j|\dot{u}|^q
                -P\partial_j(\dot{u}^ju^k)\partial_k|\dot{u}|^q\right)dxds\nonumber\\
            &=&\int^t_0\int\sigma^{2+\frac{q}{2}}\big(
        -|\dot{u}|^{q}P'\partial_j\dot{u}^j\rho\mathrm{div}u
        +P|\dot{u}|^q\partial_j\dot{u}^j\mathrm{div}u-P|\dot{u}|^q\partial_k\dot{u}^j\partial_ju^k\nonumber\\
            &&
            -\partial_j|\dot{u}|^q\dot{u}^jP'\rho\mathrm{div}u
                +P\dot{u}^j\mathrm{div}u\partial_j|\dot{u}|^q
                -P\dot{u}^j\partial_ju^k\partial_k|\dot{u}|^q\big)dxds\nonumber\\
            &\leq&C\int^T_0\int\sigma^{2+\frac{q}{2}}|\nabla\dot{u}||\dot{u}|^q|\nabla
            u|dxds,
    \end{eqnarray}
        \begin{eqnarray}
          H_3&=&\int^t_0\int\mu\sigma^{2+\frac{q}{2}}|\dot{u}|^q\dot{u}^j
          \left(\Delta u^j_t+\mathrm{div}(u\Delta
            u^j)\right)dxds\nonumber\\
                &=&-\int^t_0\int\sigma^{2+\frac{q}{2}}\mu\left(
                \partial_i(|\dot{u}|^q\dot{u}^j)\partial_iu^j_t+\Delta u^j
                u\cdot\nabla (|\dot{u}|^q\dot{u}^j)\right)dxds\nonumber\\
            &=&-\int^t_0\int\sigma^{2+\frac{q}{2}}\mu\left(|\dot{u}|^q
            |\nabla \dot{u}|^2-\partial_i\dot{u}^j
            u^k\partial_k\partial_i
            u^j|\dot{u}|^q-\partial_i \dot{u}^j|\dot{u}|^q\partial_i u^k\partial_k
            u^j\right.\nonumber\\
                &&\left.+\partial_i|\dot{u}|^q\dot{u}^j\partial_iu^j_t
            +\Delta u^ju\cdot\nabla(|\dot{u}|^q\dot{u}^j)\right)dxds\nonumber\\
                    &=&-\int^t_0\int\sigma^{2+\frac{q}{2}}\mu\big(|\dot{u}|^q|\nabla \dot{u}|^2
                    +\partial_i\dot{u}^j\mathrm{div}
                    u\partial_iu^j|\dot{u}|^q+\partial_i|\dot{u}|^q\dot{u}^j\mathrm{div}u\partial_iu^j\nonumber\\
            &&-\partial_i \dot{u}^j|\dot{u}|^q\partial_i u^k\partial_k u^j
            +\partial_i|\dot{u}|^q\dot{u}^j\partial_i\dot{u}^j
            -\partial_i|\dot{u}|^q\dot{u}^j\partial_iu^k\partial_ku^j\nonumber\\
                &&-\partial_iu^j\partial_iu^k\partial_k|\dot{u}|^q\dot{u}^j
                -\partial_i u^j|\dot{u}|^q\partial_i
                    u^k\partial_k\dot{u}^j
                    \big)dxds\nonumber\\
            &\leq&-\int^t_0\int\sigma^{2+\frac{q}{2}}\mu|\dot{u}|^q|\nabla \dot{u}|^2dxds
            -\int^t_0\int\frac{q}{4}\sigma^{2+\frac{q}{2}}\mu|\dot{u}|^{q-2}|\nabla
            |\dot{u}|^2|^2dxds\nonumber\\
                &&
            +C\int^t_0\int \sigma^{2+\frac{q}{2}}|\nabla u|^2|\dot{u}|^q|\nabla \dot{u}|dxds,
        \end{eqnarray}
            \begin{eqnarray}
                H_4&=&\int^t_0\int\sigma^{2+\frac{q}{2}}|\dot{u}|^q
                \dot{u}^j\left(\partial_j\partial_t((\lambda+\mu)\mathrm{div}u)
                +\mathrm{div}(u\partial_j((\lambda+\mu)\mathrm{div}u))\right)dxds\nonumber\\
            &=&\int^t_0\int\{-\sigma^{2+\frac{q}{2}}\partial_j(|\dot{u}|^q
            \dot{u}^j)\left(\partial_t((\lambda+\mu)\mathrm{div}u)
                +\mathrm{div}(u(\lambda+\mu)\mathrm{div}u)\right)\nonumber\\
                    &&-\sigma^{2+\frac{q}{2}}|\dot{u}|^q\dot{u}^j\mathrm{div}
                    (\partial_ju(\lambda+\mu)\mathrm{div}u)\}dxds\nonumber\\
                        &=&\int^t_0\int-\sigma^{2+\frac{q}{2}}\partial_j
                        (|\dot{u}|^q\dot{u}^j)\big(\partial_t((\lambda+\mu)\mathrm{div}u)\nonumber\\
                            &&
                +u^k\lambda'\partial_k\rho\mathrm{div}u+(\lambda+\mu)u^k\partial_k\mathrm{div}u
                \big)dxds
                +O_5\nonumber\\
                                &=&\int^t_0\int-\sigma^{2+\frac{q}{2}}\partial_j
                                (|\dot{u}|^q\dot{u}^j)\left((\lambda+\mu)\frac{D}{Dt}\mathrm{div}u
                                +\lambda'\rho_t\mathrm{div}u
                +u^k\lambda'\partial_k\rho\mathrm{div}u\right)dxds+O_5\nonumber\\
                             &=&-\int^t_0\int\sigma^{2+\frac{q}{2}}|\dot{u}|^q\partial_j(\partial_tu^j
                             +u\cdot\nabla u^j)(\lambda+\mu)\frac{D}{Dt}
                             \mathrm{div}udxds+O_5\nonumber\\
                &&-\int^t_0\int\sigma^{2+\frac{q}{2}}\partial_j|\dot{u}|^q
                \dot{u}^j(\lambda+\mu)\frac{D}{Dt}
                             \mathrm{div}udxds\nonumber\\
            &\leq&-\int^t_0\int\sigma^{2+\frac{q}{2}}(\lambda+\mu)|\dot{u}|^q\left|\frac{D}{Dt}
                             \mathrm{div}u\right|^2dxds
                             +O_5\nonumber\\
            &&+\int^t_0\int q\sigma^{2+\frac{q}{2}}(\lambda+\mu)|\dot{u}|^q\left|\frac{D}{Dt}
                             \mathrm{div}u\right||\nabla\dot{u}|dxds,
            \end{eqnarray}
        \begin{eqnarray}
          H_5&=&\int^t_0\int \sigma^{2+\frac{q}{2}}|\dot{u}|^q\dot{u}^j[(\rho f^j)_t+\mathrm{div}(u\rho f^j)]
           dxds\nonumber\\
                &=&\int^t_0\int \sigma^{2+\frac{q}{2}}|\dot{u}|^q\dot{u}^j[\rho f^j_t+\rho u\cdot\nabla
                f^j]dxds\nonumber\\
                &\leq &C\int^t_0\int \sigma^{2+\frac{q}{2}}\left(\rho|\dot{u}|^{2+q}+ |f_t|^{2+q}
                +|u|^{4+2q}+|\nabla
                f|^{4+2q}\right)dxds,\label{2VDD-E2.71}
        \end{eqnarray}
where $O_5$ denotes any term dominated by
$C\int^t_0\int\sigma^{2+\frac{q}{2}}|\dot{u}|^q|\nabla u|^2(|\nabla
\dot{u}|+|\frac{D}{Dt}\mathrm{div}u|)dxds$, and $t\in[0,T]$. From
(\ref{2VDD-E2.16-0})--(\ref{2VDD-E2.16-1}),
(\ref{2VDD-E2.16-3})--(\ref{2VDD-E2.16-4}), (\ref{2VDD-E2.59}),
(\ref{2VDD-E2.62}), (\ref{2VDD-E2.64-0})--(\ref{2VDD-E2.64}) and
(\ref{2VDD-E2.67})--(\ref{2VDD-E2.71}), we have
    \begin{eqnarray*}
      && \sup_{0<t\leq T}\sigma^{2+\frac{q}{2}}\int\rho|\dot{u}|^{2+q}dx
      +\int^T_0\int\sigma^{2+\frac{q}{2}}|\dot{u}|^q|\nabla \dot{u}|^2dxdt
      \\
        &\leq& C(M_q)+C\int^T_0\int
        \sigma^{2+\frac{q}{2}}\left(|\dot{u}|^q|\nabla u|^4
        +|\dot{u}|^q|\nabla u|^2
                +|u|^{4+2q}\right)dxds\\
        &\leq&C(M_q)+C\left(\int^T_0
        \sigma\|\dot{u}\|_{L^{4+q}}^2ds\right)^\frac{q}{2}
        \left(\int^T_0\sigma^\frac{4}{2-q}\|\nabla u\|_{L^{4+q}}^\frac{8}{2-q}
                ds\right)^\frac{2-q}{2}+C\left(\int^T_0
        \sigma\|\dot{u}\|_{L^{4+q}}^2ds\right)^\frac{q}{2}\\
                &&\times
        \left(\int^T_0\sigma^\frac{4}{2-q}\|\nabla u\|_{L^{4+q}}^\frac{8}{2-q}
                ds\right)^\frac{q(2-q)}{4(2+q)}
                \left(\int^T_0\sigma^\frac{8+4q}{4-q^2}\|\nabla u\|_{L^{2}}^\frac{16}{(2-q)(4+q)}
                ds\right)^\frac{(2-q)(4+q)}{4(2+q)}\\
                    &&+C\int^T_0\sigma^{2+\frac{q}{2}}\left(\|\nabla
                    u\|_{L^2}^{2+2q}+\|\nabla
                    u\|_{L^2}^{4+2q}\right)dt\\
        &\leq&C(M_q)+C\int^T_0\sigma\left(\|\sqrt{\rho}\dot{u}\|_{L^2}^\frac{4}{4+q}\|\nabla
        \dot{u}\|_{L^2}^{\frac{2(2+q)}{4+q}}+\|\nabla
        \dot{u}\|_{L^2}^2
        \right)dt\\
            &&+C
        \int^T_0\sigma^\frac{4}{2-q}\left(\|F\|_{L^{4+q}}+\|w\|_{L^{4+q}}+\|\rho-\tilde{\rho}
        \|_{L^{4+q}}\right)^\frac{8}{2-q}
                ds\\
                         &\leq&C(M_q)+C
        \int^T_0\sigma^\frac{4}{2-q}\left(\|F\|_{L^{4+q}}+\|w\|_{L^{4+q}}
        \right)^\frac{8}{2-q}                ds+C
        \int^T_0\sigma^\frac{4}{2-q}\|\rho-\tilde{\rho}
        \|_{L^{4+q}}^{4+q}
                ds\\
         &\leq&C(M_q)+C
        \int^T_0\sigma^\frac{4}{2-q}\left(\|F\|_{L^2}^\frac{2}{4+q}
        \|\nabla F\|_{L^2}^\frac{2+q}{4+q}+\|w\|_{L^2}^\frac{2}{4+q}
        \|\nabla w\|_{L^2}^\frac{2+q}{4+q}
        \right)^\frac{8}{2-q}
                ds\\
                    &&+C
        \int^T_0\sigma^\frac{4}{2-q}\|F
        \|_{L^{4+q}}^{4+q}
                ds\\
        &\leq&C(M_q)+C
        \int^T_0\sigma^\frac{4}{2-q}(\|\nabla u\|_{L^2}+\|\rho-\tilde{\rho}\|_{L^2})^\frac{16}{(4+q)(2-q)}
        (\|\rho \dot{u}\|_{L^2}+\|f\|_{L^2})^\frac{8(2+q)}{(4+q)(2-q)}
                ds\\
        &&+C        \int^T_0\sigma^\frac{4}{2-q}(\|\nabla
        u\|_{L^2}+\|\rho-\tilde{\rho}\|_{L^2})^2
        (\|\rho \dot{u}\|_{L^2}+\|f\|_{L^2})^{2+q}
                ds\\
        &\leq&C(M_q).
    \end{eqnarray*}
\end{proof}

\begin{prop}\label{2VDD-P2.8}
 If $f\in L^\infty_tL^{2+q}_x$, $(\rho,u)$
  is a smooth solution of (\ref{2VDD-E1.1})--(\ref{2VDD-E1.2}) as
  in Proposition \ref{2VDD-P2.7}, then we have
    \begin{equation}
      \|F\|_{L^\infty}+\|w\|_{L^\infty}\leq C(\|\nabla u\|_{L^2}+\|\rho-\tilde{\rho}\|_{L^2})^{\frac{q}{2+2q}}
        (\|\rho\dot{u}\|_{L^{2+q}}+\|f\|_{L^{2+q}})^{\frac{2+q}{2+2q}}\label{2VDD-E2.72-1}
    \end{equation}
  and
    \begin{equation}
     \int^T_0(\|F\|_{L^\infty}+\|w\|_{L^\infty})ds
        \leq C(M_q)(C_0+C_f)^{\frac{q\theta}{4+4q}}(1+T).\label{2VDD-E2.72}
    \end{equation}
\end{prop}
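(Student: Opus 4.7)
The plan is to prove the pointwise bound (\ref{2VDD-E2.72-1}) by a two-dimensional Gagliardo–Nirenberg interpolation between $L^2$ and $W^{1,2+q}$ applied to $F$ and $w$, then to derive the time-integrated bound (\ref{2VDD-E2.72}) by plugging in the previously established $L^2$-smallness in $\rho-\tilde{\rho}$ and $\nabla u$, together with the weighted $L^{2+q}_t L^{2+q}_x$ estimate (\ref{2VDD-E2.65}) for $\rho\dot u$, and an application of H\"older in time.

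For (\ref{2VDD-E2.72-1}), I would first recall the Gagliardo–Nirenberg inequality in $\mathbb{R}^2$: for any $g\in L^2\cap W^{1,2+q}$ the scaling identity yields
\begin{equation*}
\|g\|_{L^\infty}\leq C\|g\|_{L^2}^{\frac{q}{2+2q}}\|\nabla g\|_{L^{2+q}}^{\frac{2+q}{2+2q}}.
\end{equation*}
Apply this to $g=F$ and $g=w$. The $L^2$ factors are bounded using $F=(\lambda+2\mu)\mathrm{div}\,u-(P-P(\tilde{\rho}))$ and the definition of $w$ together with the bound $|P-P(\tilde{\rho})|\leq C|\rho-\tilde{\rho}|$ (valid since $P\in C^1$ on $[0,\bar{\rho}]$), yielding $\|F\|_{L^2}+\|w\|_{L^2}\leq C(\|\nabla u\|_{L^2}+\|\rho-\tilde{\rho}\|_{L^2})$. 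The $W^{1,2+q}$ factors are controlled by (\ref{2VDD-E2.16-2}) with $p=2+q$, giving $\|\nabla F\|_{L^{2+q}}+\|\nabla w\|_{L^{2+q}}\leq C(\|\rho\dot u\|_{L^{2+q}}+\|f\|_{L^{2+q}})$. Multiplying these two bounds with the appropriate exponents yields (\ref{2VDD-E2.72-1}).

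For (\ref{2VDD-E2.72}), I would integrate (\ref{2VDD-E2.72-1}) in $t$ and assemble four ingredients: (i) $\|\rho-\tilde{\rho}\|_{L^2}\leq C(C_0+C_f)^{1/2}$, from Proposition~\ref{2VDD-P2.1} and $|\rho-\tilde{\rho}|^2\leq CG(\rho)$; (ii) $\sigma\|\nabla u\|_{L^2}^2\leq (C_0+C_f)^\theta$, from Proposition~\ref{2VDD-P2.2}; (iii) $\int_0^T\sigma^{(2+q)/2}\|\rho\dot u\|_{L^{2+q}}^{2+q}\,dt\leq C(M_q)$, which follows from (\ref{2VDD-E2.65}) because $\rho\leq\bar{\rho}$ implies $\rho^{2+q}\leq\bar{\rho}^{1+q}\rho$; and (iv) $\sup_t\|f\|_{L^{2+q}}\leq M_q$, from (\ref{2VDD-E1.10}). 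Combining (i)–(ii) gives
\begin{equation*}
\bigl(\|\nabla u\|_{L^2}+\|\rho-\tilde{\rho}\|_{L^2}\bigr)^{\frac{q}{2+2q}}\leq C(C_0+C_f)^{\frac{q\theta}{4+4q}}\bigl(1+\sigma^{-\frac{q}{4+4q}}\bigr).
\end{equation*}
For the remaining factor I would split $(\|\rho\dot u\|_{L^{2+q}}+\|f\|_{L^{2+q}})^{(2+q)/(2+2q)}$ into two pieces. The $\rho\dot u$ piece is handled by H\"older in $t$ with exponents $2+2q$ and $(2+2q)/(1+2q)$: writing $\|\rho\dot u\|^{(2+q)/(2+2q)}_{L^{2+q}}=\bigl(\sigma^{(2+q)/2}\|\rho\dot u\|_{L^{2+q}}^{2+q}\bigr)^{1/(2+2q)}\sigma^{-(2+q)/(4+4q)}$ and using (iii), the integral reduces to a purely time-singular factor whose exponent of $\sigma^{-1}$ totals $(1+q)/(1+2q)<1$ for $q>0$, hence integrable on $[0,1]$ and bounded by $C(1+T)$ overall. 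The $f$ piece is handled by (iv) and the integrability of $\sigma^{-q/(4+4q)}$ on $[0,1]$.

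The main technical point I expect to be delicate is the temporal integrability near $t=0$: both the weight $(1+\sigma^{-1/2})^{q/(2+2q)}$ coming from $\nabla u$ and the weight $\sigma^{-(2+q)/(4+4q)}$ arising in the H\"older decomposition of $\rho\dot u$ are singular at $t=0$, and the bookkeeping shows that the combined exponent is $(1+q)/(1+2q)$, which is strictly less than $1$ precisely because $q>0$. This is exactly where the hypothesis $q\in(0,2)$ (and in particular $q\neq 0$) is used in an essential way.
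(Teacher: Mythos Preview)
Your proof of (\ref{2VDD-E2.72-1}) is identical to the paper's: Gagliardo--Nirenberg in $\mathbb{R}^2$ applied to $F$ and $w$, with $\|F\|_{L^2}+\|w\|_{L^2}$ bounded via $\|\nabla u\|_{L^2}+\|\rho-\tilde\rho\|_{L^2}$ and the gradients bounded via (\ref{2VDD-E2.16-2}).

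For (\ref{2VDD-E2.72}) the paper takes a slightly more direct route than you do. Instead of invoking the time-integrated bound (\ref{2VDD-E2.65}) and applying H\"older in $t$ with exponents $2+2q$ and $(2+2q)/(1+2q)$, the paper uses the \emph{pointwise-in-time} bound (\ref{2VDD-E2.66}) from Proposition~\ref{2VDD-P2.7}, namely $\sigma^{2+q/2}\int\rho|\dot u|^{2+q}\,dx\leq C(M_q)$, which gives directly $(\|\rho\dot u\|_{L^{2+q}}+\|f\|_{L^{2+q}})^{(2+q)/(2+2q)}\leq C(M_q)\,\sigma^{-(4+q)/(4+4q)}$. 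Combined with the first factor $\sigma^{-q/(4+4q)}(C_0+C_f)^{q\theta/(4+4q)}$, one is left with the single integral $\int_0^T\sigma^{-(2+q)/(2+2q)}\,ds\leq C(1+T)$. Your argument is correct as well: the H\"older step produces the exponent $(1+q)/(1+2q)<1$ exactly as you computed, and the bookkeeping is sound. The trade-off is that your version needs only the weaker lemma (\ref{2VDD-E2.65}), while the paper's avoids the H\"older-in-time step at the cost of invoking the stronger sup-in-time estimate (\ref{2VDD-E2.66}).
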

\begin{proof}
  From (\ref{2VDD-E2.1}), (\ref{2VDD-E2.16-2}),
  (\ref{2VDD-E2.22}),
   (\ref{2VDD-E2.66}) and  the Galiardo-Nirenberg inequality, we have
    \begin{eqnarray*}
        &&    \|F\|_{L^\infty}\\
        &\leq& C\|F\|_{L^2}^{\frac{q}{2+2q}}
        \|\nabla F\|_{L^{2+q}}^{\frac{2+q}{2+2q}}\\
        &\leq& C(\|\nabla u\|_{L^2}+\|\rho-\tilde{\rho}\|_{L^2})^{\frac{q}{2+2q}}
        (\|\rho\dot{u}\|_{L^{2+q}}+\|f\|_{L^{2+q}})^{\frac{2+q}{2+2q}}.
    \end{eqnarray*}
  and
    \begin{eqnarray*}
        &&   \int^T_0 \|F\|_{L^\infty}ds\\
        &\leq& C(M_q)\int^T_0(\sigma^{-\frac{1}{2}}(C_0+C_f)^\frac{\theta}{2})^{\frac{q}{2+2q}}
        (\sigma^{-\frac{4+q}{4+2q}})^{\frac{2+q}{2+2q}}ds\\
        &\leq& C(M_q)(C_0+C_f)^{\frac{q\theta}{4+4q}}\int^T_0
        \sigma^{-\frac{2+q}{2+2q}}ds\leq C(M_q)(C_0+C_f)^{\frac{q\theta}{4+4q}}(1+T).
    \end{eqnarray*}
Similarly, we can obtain the same estimates for $w$.
\end{proof}

 Then, we derive a priori pointwise bounds for the density
$\rho$.

\begin{prop}\label{2VDD-P2.3}
  Given numbers $0<\underline{\rho}_2<\underline{\rho}_1<\tilde{\rho}
  <\bar{\rho}_1<\bar{\rho}_2<\bar{\rho}$, there is an
  $\varepsilon>0$ such that, if $(\rho,u)$
  is a smooth solution of (\ref{2VDD-E1.1})--(\ref{2VDD-E1.2}) with
  $C_0+C_f\leq\varepsilon$ and
  $0<\rho_0\leq\bar{\rho}_1$,
   then
        \begin{equation}
          0<\rho\leq\bar{\rho}_2,
          \ (x,t)\in\mathbb{R}^2\times[0,T].\label{2VDD-E2.37-0}
        \end{equation}
  Similarly, if $\rho_0\geq\underline{\rho}_1$ for all $x$, then
  $\rho\geq\underline{\rho}_2$ for all $x$ and $t$. Furthermore,
  the estimates in Propositions \ref{2VDD-P2.1}-\ref{2VDD-P2.8}  hold.
\end{prop}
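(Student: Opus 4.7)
The plan is to trace $\rho$ along particle trajectories using the ODE \eqref{2VDD-E1.18} and to exploit the sign condition on $P-P(\tilde{\rho})$ together with the smallness of $F$ provided by Proposition \ref{2VDD-P2.8}. Since $\Lambda'(\rho)=(2\mu+\lambda(\rho))/\rho>0$, the function $\Lambda$ is strictly monotone in $\rho$, so pointwise bounds on $\rho$ are equivalent to pointwise bounds on $\Lambda$; the equation for $\Lambda$ reads $\tfrac{d}{dt}\Lambda(\rho(x(t),t))=-[P(\rho)-P(\tilde{\rho})]-F(x(t),t)$, in which the pressure term is a restoring force toward $\tilde{\rho}$ and $F$ is a forcing that must be dominated.

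For the upper bound I argue by contradiction: suppose that along some trajectory $x(\cdot)$ the density first reaches the value $\bar{\rho}_2$ at a time $t^*$, and let $t_*\in[0,t^*)$ be the last prior time with $\rho(x(t_*),t_*)=\bar{\rho}_1$. Then on $[t_*,t^*]$ we have $\rho\ge\bar{\rho}_1>\tilde{\rho}$, so the hypothesis $(\rho-\tilde{\rho})(P(\rho)-P(\tilde{\rho}))>0$ together with continuity gives $P(\rho)-P(\tilde{\rho})\ge\eta$ for some constant $\eta=\eta(P,\tilde{\rho},\bar{\rho}_1)>0$. Integrating the ODE on $[t_*,t^*]$ yields
\[
\Lambda(\bar{\rho}_2)-\Lambda(\bar{\rho}_1)+\eta(t^*-t_*)\le \int_{t_*}^{t^*}\|F(\cdot,s)\|_{L^\infty}\,ds.
\]
To control the right-hand side uniformly in $t^*$, I split into $[t_*,t^*]\cap[0,1]$ and $[t_*,t^*]\cap[1,\infty)$. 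On the former I apply the integrated bound \eqref{2VDD-E2.72}, which contributes at most $C(M_q)(C_0+C_f)^{q\theta/(4+4q)}$. On the latter I use the pointwise bound \eqref{2VDD-E2.72-1} together with the uniform-in-time smallness $\|\nabla u\|_{L^2}+\|\rho-\tilde{\rho}\|_{L^2}\le C(C_0+C_f)^{\theta/2}$ coming from \eqref{2VDD-E2.22} and Proposition \ref{2VDD-P2.1} at $\sigma=1$, plus the bound $\|\rho\dot{u}\|_{L^{2+q}}\le C(M_q)$ obtainable from \eqref{2VDD-E2.66} at $\sigma=1$ and $\rho\le\bar{\rho}$; this yields $\|F(\cdot,s)\|_{L^\infty}\le C(M_q)(C_0+C_f)^{q\theta/(4+4q)}$ uniformly for $s\ge1$. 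Choosing $\varepsilon$ so small that $C(M_q)\varepsilon^{q\theta/(4+4q)}$ is smaller than both $\eta$ and $\Lambda(\bar{\rho}_2)-\Lambda(\bar{\rho}_1)$ produces the desired contradiction.

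The lower bound $\rho\ge\underline{\rho}_2$ follows by the mirror argument: if $\rho(x(t),t)$ were to drop to $\underline{\rho}_2$, then on the interval where $\rho\in[\underline{\rho}_2,\underline{\rho}_1]$ one has $P(\tilde{\rho})-P(\rho)\ge\eta'>0$, which drives $\Lambda$ \emph{upward} along the trajectory whenever $\|F\|_{L^\infty}$ is suitably small, contradicting the assumed descent. Finally, since \eqref{2VDD-E2.37-0} strictly improves the standing hypothesis $\rho\le\bar{\rho}$ and \eqref{2VDD-E2.22} strictly improves $A_1+A_2\le 2(C_0+C_f)^\theta$, the classical continuation argument closes the a priori estimates on the full time interval $[0,T]$ of local smoothness, so Propositions \ref{2VDD-P2.1}--\ref{2VDD-P2.8} remain in force. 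The main difficulty lies precisely in the fact that the $L^1_tL^\infty_x$ bound on $F$ in \eqref{2VDD-E2.72} grows linearly in $T$, so it cannot by itself dominate the left-hand side on long intervals; overcoming this requires combining the two different forms of the $F$-bound across a short-time/long-time split, which is made possible by the uniform-in-time smallness of $\|\nabla u\|_{L^2}$ for $\sigma=1$.
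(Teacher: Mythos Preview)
Your argument is correct and follows essentially the same line as the paper's: trace $\rho$ along characteristics via \eqref{2VDD-E1.18}, split into short and long time, and use the smallness of $F$ furnished by Proposition~\ref{2VDD-P2.8} together with the sign of $P(\rho)-P(\tilde\rho)$ to derive a contradiction; the continuation step to close the bootstrap is also handled as in the paper. The only cosmetic difference is in how the large-time contradiction is organized: the paper applies a pointwise first-touching maximum principle (at the first $t_0$ with $\rho=\bar\rho_2$ one has $\tfrac{d}{dt}\Lambda\ge 0$, forcing $|F(x(t_0),t_0)|\ge P(\bar\rho_2)-P(\tilde\rho)$, which is impossible for small $\varepsilon$), whereas you integrate over the full crossing interval $[t_*,t^*]$ and absorb the linear growth of $\int\|F\|_{L^\infty}$ by the pressure term $\eta(t^*-t_*)$. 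Both devices are standard and yield the same smallness requirement on $\varepsilon$.
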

\begin{proof}
At first, we prove that if (\ref{2VDD-E2.1-1}) and
(\ref{2VDD-E2.1-2}) hold, then estimate (\ref{2VDD-E2.37-0})
holds.

  We fix a curve $x(t)$ satisfying $\dot{x}= u(x(t),t)$ and $x(0)=x$. From
  (\ref{2VDD-E1.18}), we have
    \begin{equation}
 \frac{d}{dt}\Lambda(\rho(x(t),t))+P(\rho(x(t),t))-P(\tilde{\rho})=-F
 (x(t),t),\label{2VDD-E2.36}
    \end{equation}
where $\Lambda$ satisfies that $\Lambda(1)=0$ and
$\Lambda'(\rho)=\frac{2\mu+\lambda(\rho)}{\rho}$.

First for small time, we estimate the pointwise bounds of the
density as follows. From (\ref{2VDD-E2.1-1}) and (\ref{2VDD-E2.72}),
we have, for all $t\in[0,1]$,
    $$
    \left|\Lambda(\rho(x(t),t))-\Lambda(\rho_0(x))
    \right|
        \leq C(M_q)(C_0+C_f)^{\frac{q\theta}{4+4q}}+Ct.
    $$
When
    \begin{equation}
    2C(M_q)\varepsilon^{\frac{q\theta}{4+4q}}\leq
        \Lambda(\bar{\rho}_1+\frac{1}{3}(\bar{\rho}_2-\bar{\rho}_1))-\Lambda(\bar{\rho}_1),
    \end{equation}
we get
    $$
      \Lambda(\rho(x(t),t))\leq \Lambda(\bar{\rho}_1+\frac{1}{3}(\bar{\rho}_2-\bar{\rho}_1)),
      \ \ t\in[0,\tau],
    $$
and
    \begin{equation}
          \rho(x,t)\leq\bar{\rho}_1+\frac{1}{3}(\bar{\rho}_2-\bar{\rho}_1),
          \ (x,t)\in\mathbb{R}^2\times[0,\tau],
        \end{equation}
where
$\tau=\min\{1,\frac{1}{2C}(\Lambda(\bar{\rho}_1+\frac{1}{3}(\bar{\rho}_2-\bar{\rho}_1))-\Lambda(\bar{\rho}_1))$.
Since $\rho_0>0$, then we have
    $$
    \Lambda(\rho(x(t),t))
        \geq\Lambda(\rho_0(x))-C(M_q)(C_0+C_f)^{\frac{q\theta}{4+4q}}-C\tau
    >-\infty, \ t\in[0,\tau],
    $$
and
    $$
    \rho>0,\ (x,t)\in\mathbb{R}^2\times[0,\tau].
    $$
Similarly, if $\rho_0\geq\underline{\rho}_1$ and
     \begin{equation}
    2C(M_q)\varepsilon^{\frac{q\theta}{4+4q}}\leq \Lambda(\underline{\rho}_1)-\Lambda(\underline{\rho}_1-\frac{1}{3}(
        \underline{\rho}_1-\underline{\rho}_2)),
    \end{equation}
we get
       \begin{equation}
          \rho\geq\underline{\rho}_1-\frac{1}{3}(
        \underline{\rho}_1-\underline{\rho}_2),
          \ (x,t)\in\mathbb{R}^2\times[0,\tau_1].
        \end{equation}
where $\tau_1=\min\{\tau,
\frac{1}{2C}(\Lambda(\underline{\rho}_1)-\Lambda(\underline{\rho}_1-\frac{1}{3}(
        \underline{\rho}_1-\underline{\rho}_2)))\}$.

Then, for large time $t\geq\tau$, we estimate the pointwise bounds
of density as follows. From (\ref{2VDD-E2.1}), (\ref{2VDD-E2.22}),
(\ref{2VDD-E2.66}),
 (\ref{2VDD-E2.72-1}) and (\ref{2VDD-E2.36}), we have
        \begin{equation}
      \frac{d \Lambda(\rho(x(t),t))}{dt}+P(\rho(x(t),t))-P(\tilde{\rho})=O_5(t),\label{2VDD-E2.55}
    \end{equation}
where
    $$
    |O_5(t)|\leq  C(\tau,M_q)(C_0+C_f)^{\frac{q\theta}{4+4q}},
    \ t\geq\tau.
    $$
Now, we apply a standard maximum principle argument to estimate
the upper bounds of density. Let
    $$
    t_0=\max\{t\in(\tau,T]|\Lambda(\rho(x(s),s))\leq\Lambda(
    \bar{\rho}_2),\
    \textrm{ for all }s\in[0,t]
    \}.
    $$
If $t_0<T$, we have
    $$
    \Lambda(\rho(x(t_0),t_0))=\Lambda(\bar{\rho}_2),
    $$
     $$
   \left. \frac{d\Lambda(\rho(x(t),t))}{dt}\right|_{t=t_0}\geq0,
    $$
and
    $$
    \rho(x(t_0),t_0)=\bar{\rho}_2.
    $$
From (\ref{2VDD-E2.55}), we have
    $$
    O_5(t_0)\geq P(\bar{\rho}_2)-P(\tilde{\rho}).
    $$
On the other hand, when
    \begin{equation}
    C(\tau,M_q)        \varepsilon^{\frac{q\theta}{4+4q}}<
        P(\bar{\rho}_2)-P(\tilde{\rho}),
    \end{equation}
we have
     $$
    O_5(t_0)< P(\bar{\rho}_2)-P(\tilde{\rho}).
    $$
It is a contradiction. Thus, we have $t_0=T$ and
        \begin{equation}
          \rho\leq\bar{\rho}_2,
          \ (x,t)\in\mathbb{R}^2\times[0,T].
        \end{equation}
Similarly, we can obtain the lower bound of the density.

Using the classical continuation method, (\ref{2VDD-E2.22}) and
(\ref{2VDD-E2.37-0}), we can finish the proof of this proposition.
\end{proof}

Then, we can prove the global existence of smooth solutions to
(\ref{2VDD-E1.1})--(\ref{2VDD-E1.2}).

\begin{prop}\label{2VDD-P2.4}
 Assume that $\rho_0-\tilde{\rho}\in W^{1,p}\cap C^{1+\alpha}$,
$u_0\in
  H^2\cap C^{2+\alpha}$, $p>2$, $\alpha\in(0,1)$,
  $\rho_0(x)\geq \underline{\rho}$ with some $\underline{\rho}>0$ for all $x\in\mathbb{R}^2$, $P,\lambda\in
  C^\infty([0,\bar{\rho}])$ and  $f\in
  C^\infty([0,\infty);C^\infty)$. Under the assumptions of Theorem \ref{2VDD-T1.1},
  then there exists a solution $(\rho,u)\in C^{1+\alpha,1+\alpha/2}\times
  C^{2+\alpha,1+\alpha/2}(\mathbb{R}^2\times[0,T])$ satisfying
  (\ref{2VDD-E1.1})--(\ref{2VDD-E1.2}) and for which the
  bound estimates of Propositions \ref{2VDD-P2.1}-\ref{2VDD-P2.3}
  hold,
   for all $T>0$.
\end{prop}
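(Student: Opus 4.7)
The plan is to combine a classical local existence theorem with the a priori estimates of Propositions \ref{2VDD-P2.1}--\ref{2VDD-P2.3} in a standard continuation argument. First, since $\rho_0\geq\underline{\rho}>0$ and $(\rho_0,u_0)$ are smooth, I would establish local-in-time existence of a classical solution $(\rho,u)\in C^{1+\alpha,1+\alpha/2}\times C^{2+\alpha,1+\alpha/2}(\mathbb{R}^2\times[0,T_0])$ for some $T_0>0$. This is standard: linearize the momentum equation with the previous iterate's coefficients and solve it by parabolic Schauder theory (the non-degeneracy is ensured by the positive lower bound on $\rho$), while the density is obtained by integrating the continuity equation along characteristics; a contraction argument in a suitable H\"older space closes the iteration. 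At this local stage the lower bound $\rho\geq\underline{\rho}/2$ and the regularity class are preserved on $[0,T_0]$.

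Second, I would set up the continuation. Define
$$
T^{*}=\sup\bigl\{T>0\,:\ 0\leq\rho\leq\bar{\rho}\ \text{and}\ A_1(T)+A_2(T)\leq 2(C_0+C_f)^\theta\ \text{on }[0,T]\bigr\}.
$$
At $t=0$ we have $\rho_0\leq\bar{\rho}_1<\bar{\rho}$ and $A_1(0)+A_2(0)=0$, so by continuity of the local smooth solution $T^{*}>0$. On $[0,T^{*}]$ the hypotheses (\ref{2VDD-E2.1-1})--(\ref{2VDD-E2.1-2}) are met, so Proposition \ref{2VDD-P2.2} yields $A_1+A_2\leq (C_0+C_f)^\theta$ (strictly less than $2(C_0+C_f)^\theta$), and Proposition \ref{2VDD-P2.3} improves the density bounds to $\underline{\rho}_2\leq\rho\leq\bar{\rho}_2$ with $\bar{\rho}_2<\bar{\rho}$, provided $\varepsilon$ is sufficiently small. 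Thus at $t=T^{*}$ both constraints hold with room to spare, and $\rho(\cdot,T^{*})$ remains bounded below by a positive constant, so the local existence theorem can be restarted at $T^{*}$ with data $(\rho(\cdot,T^{*}),u(\cdot,T^{*}))$. This strictly extends the solution past $T^{*}$, contradicting the maximality of $T^{*}$ unless $T^{*}=+\infty$.

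Third, I would verify that the H\"older regularity class $C^{1+\alpha,1+\alpha/2}\times C^{2+\alpha,1+\alpha/2}$ is propagated uniformly on every bounded time interval; this is where the main technical effort lies, since Propositions \ref{2VDD-P2.1}--\ref{2VDD-P2.3} only control energy-type quantities. The bootstrap goes as follows: from the strict positivity $\rho\geq\underline{\rho}_2$ and the $L^\infty$ bound on $\rho$, the momentum equation is a uniformly parabolic system for $u$ with coefficients depending on $\rho$. Using Lemma \ref{2VDD-L2.2} to pass from $\rho\dot u$ bounds to $W^{1,p}$ bounds on $\nabla u$ via the decomposition $\Delta u=\nabla((F+P-P(\tilde\rho))/(\lambda+2\mu))+\mathrm{curl}\,w$, and then applying parabolic Schauder estimates to $u_t-\mu\Delta u=\cdots$, one obtains $C^{2+\alpha,1+\alpha/2}$ bounds on $u$. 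Feeding these into the transport equation for $\rho-\tilde\rho$ propagates $C^{1+\alpha}$ regularity of $\rho$ and $W^{1,p}$ of $\rho-\tilde{\rho}$ along the flow. None of these estimates are allowed to blow up in finite time because the lower bound on $\rho$ keeps the parabolicity uniform and Propositions \ref{2VDD-P2.1}--\ref{2VDD-P2.3} provide the requisite lower-order control.

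The hard part is precisely this last step: verifying that the classical H\"older regularity does not degenerate as $t\to T^{*}$, so that the restart at $T^{*}$ is legitimate. Once the propagation of $C^{1+\alpha}$ bounds on $\rho$ and $C^{2+\alpha}$ bounds on $u$ (on compact time intervals) is established through the elliptic--parabolic bootstrap above, the continuation argument closes and one obtains a global smooth solution satisfying all the estimates of Propositions \ref{2VDD-P2.1}--\ref{2VDD-P2.3} for every $T>0$.
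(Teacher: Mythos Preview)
Your proposal is correct and follows the standard continuation scheme that the paper has in mind: the paper's own proof is a single sentence deferring to Vaigant--Kazhikhov \cite{Vaigant} and to the proof of Proposition~3.2 in Hoff \cite{Hoff1995}, and your outline (local classical existence via parabolic Schauder theory plus iteration on the transport equation, then a continuation argument closed by Propositions \ref{2VDD-P2.2}--\ref{2VDD-P2.3}, with a separate bootstrap to propagate the $C^{1+\alpha}\times C^{2+\alpha}$ regularity on bounded time intervals) is exactly the content of those references. One small point of presentation: it is cleaner to let $T^{*}$ be the maximal existence time of the \emph{smooth} solution and then argue that on $[0,T^{*})$ the constraints (\ref{2VDD-E2.1-1})--(\ref{2VDD-E2.1-2}) automatically persist (by the closing estimates), so that the higher-regularity bootstrap of your step~3 yields uniform $C^{1+\alpha}\times C^{2+\alpha}$ bounds up to $T^{*}$ and hence $T^{*}=+\infty$; this avoids the slight circularity of assuming the solution exists on the set over which you take the supremum.
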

\begin{proof}
  Using similar arguments as that in \cite{Vaigant} and in the proof of Proposition 3.2 in
  \cite{Hoff1995}, one can obtain this proposition.
\end{proof}

\section{Proof of Theorem \ref{2VDD-T1.1}}\label{2VDD-S3}

Let $j_\delta(x)$ be a standard mollifying kernel of width $\delta$.
Define the approximate initial data $(\rho_0^\delta,u_0^\delta)$ by
    $$
    \rho_0^\delta=j_\delta*\rho_0+\delta,\
        u_0^\delta=j_\delta*u_0.
    $$
Assuming that similar smooth approximations have been constructed
for functions $P$, $f$ and $\lambda$, we may then apply Proposition
\ref{2VDD-P2.4} to obtain a global smooth solution
$(\rho^\delta,u^\delta)$ of (\ref{2VDD-E1.1})--(\ref{2VDD-E1.2})
with the initial data $(\rho_0^\delta,u_0^\delta)$, satisfying the
bound estimates of Propositions \ref{2VDD-P2.1}-\ref{2VDD-P2.8} hold
with constants independent of $\delta$.

First, we obtain the strong limit of $\{u^\delta\}$. From
(\ref{2VDD-E2.22}) and (\ref{2VDD-E2.35}), we have
    \begin{equation}
      <u^\delta(\cdot,t)>^\alpha\leq C(\tau),\
     \ t\geq\tau>0,\ \alpha\in(0,1).\label{2VDD-E3.1}
    \end{equation}
 From (\ref{2VDD-E3.1}), we have
    $$
    \left|u^\delta(x,t)-\frac{1}{|B_R(x)|}\int_{B_R(x)}u^\delta(y,t)dy
    \right|\leq C(\tau)R^\alpha,\ t\geq\tau>0.
    $$
Taking $R=1$, from (\ref{2VDD-E2.16-0}) and (\ref{2VDD-E2.22}), we
have
    \begin{equation}
      \|u^\delta\|_{L^\infty(\mathbb{R}^2\times[\tau,\infty))}\leq
      C(\tau).\label{2VDD-E3.2}
    \end{equation}
Then, we need only to derive a modulus of H\"{o}lder continuity in
time.
      For all $t_2\geq
t_1\geq\tau$, from (\ref{2VDD-E2.1}), (\ref{2VDD-E2.22}),
(\ref{2VDD-E2.64-0})
 and (\ref{2VDD-E3.2}), we have
    \begin{eqnarray*}
      &&|u^\delta(x,t_2)-u^\delta(x,t_1)|\nonumber\\
            &\leq&\frac{1}{|B_R(x)|}\int^{t_2}_{t_1}\int_{B_R(x)}
            |u^\delta_t(y,s)|dyds+C(\tau)R^\alpha\nonumber\\
      &\leq&CR^{-1}|t_2-t_1|^\frac{1}{2}\left(\int^{t_2}_{t_1}\int
            |u^\delta_t|^2dyds\right)^\frac{1}{2}+C(\tau)R^\alpha\nonumber\\
      &\leq&CR^{-1}|t_2-t_1|^\frac{1}{2}\left(\int^{t_2}_{t_1}\int
            |\dot{u}^\delta|^2+|u^\delta\cdot\nabla u^\delta|^2dyds\right)^\frac{1}{2}+C(\tau)R^\alpha\nonumber\\
      &\leq&C(\tau)(R^{-1}|t_2-t_1|^\frac{1}{2}+R^\alpha).
        \end{eqnarray*}
Choosing $R=|t_2-t_1|^{\frac{1}{2+2\alpha}}$, we have
   \begin{equation}
    <u^\delta>^{\alpha,\frac{\alpha}{2+2\alpha}}_{\mathbb{R}^2\times[\tau,\infty)}\leq
    C(\tau),\ \tau>0.\label{2VDD-E3.3}
    \end{equation}
From the Ascoli-Arzela theorem, we have (extract a subsequence)
    \begin{equation}
      u^\delta\rightarrow u,
      \ \textrm{uniformly on compact sets in
      }\mathbb{R}^2\times(0,\infty).\label{2VDD-E3.4}
    \end{equation}

Second, we obtain the strong limits of $\{F^\delta\}$ and
$\{w^\delta\}$. From (\ref{2VDD-E2.16-2})--(\ref{2VDD-E2.16-3}),
(\ref{2VDD-E2.22}) and (\ref{2VDD-E2.66}), using similar arguments
as that in the proof of (\ref{2VDD-E3.1})--(\ref{2VDD-E3.2}), we
have
    \begin{equation}
      <F^\delta(\cdot,t)>^{\alpha'}+\|F^\delta\|_{L^\infty(\mathbb{R}^2\times[\tau,T])}
      + <w^\delta(\cdot,t)>^{\alpha'}+\|w^\delta\|_{L^\infty(\mathbb{R}^2\times[\tau,T])}
      \leq C(\tau,T),\label{2VDD-E3.5}
    \end{equation}
where $0<\tau\leq t\leq T$ and ${\alpha'}\in(0,\frac{q}{2+q}]$.
The simple computation implies that
    \begin{eqnarray}
      F^\delta_t&=&
      \rho^\delta(2\mu+\lambda(\rho^\delta))\left(
      F^\delta\frac{d}{ds}\left.\left(\frac{1}{2\mu+\lambda(s)}\right)\right|_{s=\rho^\delta}
      +\frac{d}{ds}\left.\left(\frac{P(s)-P(\tilde{\rho})}{2\mu+\lambda(s)}\right)\right|_{s=\rho^\delta}
      \right)\mathrm{div}u^\delta\nonumber\\
        &&-u^\delta\cdot\nabla F^\delta+(2\mu+\lambda(\rho^\delta))\mathrm{div}\dot{u}^\delta-(2\mu+\lambda(\rho^\delta)
        )\partial_iu^\delta_j\partial_ju^\delta_i
    \end{eqnarray}
and
    \begin{equation}
    (w^\delta)^{k,j}_t=-u^\delta\cdot\nabla (w^\delta)^{k,j}
    +\partial_j \dot{u}^\delta_k
    -\partial_k \dot{u}^\delta_j
    -\partial_ju^\delta_i\partial_iu^\delta_k
    +\partial_ku^\delta_i\partial_iu^\delta_j.
    \end{equation}
Then, from (\ref{2VDD-E2.16-2}), (\ref{2VDD-E2.22}),
(\ref{2VDD-E2.31}), (\ref{2VDD-E3.2}) and (\ref{2VDD-E3.5}), we have
    $$
    \|F^\delta_t\|_{L^2(\mathbb{R}^2\times[\tau,T])}
    +\|w^\delta_t\|_{L^2(\mathbb{R}^2\times[\tau,T])}\leq
    C(\tau,T),\ T>\tau>0.
    $$
Using a similar argument as that in the proof of
(\ref{2VDD-E3.3}), we obtain
    \begin{equation}
    <F^\delta>^{{\alpha'},\frac{{\alpha'}}{2+2{\alpha'}}}_{\mathbb{R}^2\times[\tau,T]}+
     <w^\delta>^{{\alpha'},\frac{{\alpha'}}{2+2{\alpha'}}}_{\mathbb{R}^2\times[\tau,T]}\leq
    C(\tau,T),\ T>\tau>0.
    \end{equation}
and (extract a subsequence)
        \begin{equation}
      F^\delta\rightarrow F,
      \  w^\delta\rightarrow w,
      \ \textrm{uniformly on compact sets in
      }\mathbb{R}^2\times(0,\infty).
    \end{equation}

    Third, we obtain the strong limit of $\{\rho^\delta\}$. From
    (\ref{2VDD-E2.37-0}), we get (extract a subsequence)
    $$
    \rho^\delta\overset{*}{\rightharpoonup}\rho,
    \ \textrm{ weak-* in    }\ L^\infty(\mathbb{R}^2).
    $$
Let $\Phi(s)$ be an arbitrary continuous function on
$[0,\bar{\rho}]$. Then, we have that (extract a subsequence)
$\Phi(\rho^\delta)$ converges weak-$*$ in
$L^\infty(\mathbb{R}^2)$. Denote the weak-$*$ limit by
$\bar{\Phi}$:
    $$
     \Phi(\rho^\delta)\overset{*}{\rightharpoonup}\bar{\Phi},
    \ \textrm{ weak-* in    }\ L^\infty(\mathbb{R}^2).
    $$
From the definition of $F$, we have
    \begin{equation}
      \mathrm{div}u=\bar{\nu}F+\overline{P_0},
    \end{equation}
where
    $$
    \nu(\rho)=\frac{1}{2\mu+\lambda(\rho)},
    \ P_0(\rho)=\nu(\rho)(P(\rho)-P(\tilde{\rho})).
    $$
From (\ref{2VDD-E1.1}), we have
    $$
    \partial_t\overline{\rho\ln\rho}+\mathrm{div}(\overline{\rho\ln\rho}
    u)+F\overline{\rho\nu}+\overline{\rho P_0}=0
    $$
and
    $$
    \partial_t(\rho\ln\rho)+\mathrm{div}({\rho\ln\rho}
    u)+F\rho\overline{\nu}+\rho\overline{ P_0}=0.
    $$
Letting $\Psi=\overline{\rho\ln\rho}-{\rho\ln\rho}\geq0$, we
obtain
    \begin{equation}
      \partial_t\Psi+ +\mathrm{div}(\Psi
    u)+F(\overline{\rho\nu}-\rho\nu)
    +F\rho(\nu-\bar{\nu})+\overline{\rho P_0}-\rho\overline{
    P_0}=0.\label{2VDD-E3.11}
    \end{equation}
with the initial condition $\Psi|_{t=0}=0$ almost everywhere in
$\mathbb{R}^2$. Let $\phi(s)=s\ln s$. Since
    $$
    \phi''(s)=\frac{1}{s}\geq
\frac{1}{\bar{\rho}},
    \ s\in[0,\bar{\rho}],
    $$
we get
    $$
    \phi(\rho^\delta)-\phi(\rho)
    =\phi'(\rho)(\rho^\delta-\rho)
    +\frac{1}{2}\phi''(\rho+\xi(\rho^\delta-\rho))(\rho^\delta-\rho)^2,
    \ \xi\in[0,1],
    $$
and
    \begin{equation}
      \overline{\lim_{\delta\rightarrow0}}\|\rho^\delta-\rho\|_{L^2}^2
      \leq C\|\Psi\|_{L^1}.
    \end{equation}
Similarly, every function $f\in C^2([0,\bar{\rho}])$ satisfies
    \begin{equation}
      \left|\int g(\bar{f}-f(\rho))dx
      \right|\leq C\int |g|\Psi dx,
    \end{equation}
where $g$ is any function such that the integrations exist. Then,
when $\nu\in C^2([0,\bar{\rho}])$, we have
    \begin{equation}
      \left|\int F(\overline{\rho\nu}-\rho\nu)dx
      \right|\leq C\int |F|\Psi dx
    \end{equation}
and
    \begin{equation}
      \left|\int F\rho(\bar{\nu}-\nu)dx
      \right|\leq C\int |F|\Psi dx.
    \end{equation}
When $P_0\in C^2([0,\bar{\rho}])$, we have
    \begin{equation}
    \left|\int (\overline{\rho P_0}-\rho \overline{P_0})dx
      \right|\leq    \left|\int (\overline{\rho P_0}-\rho P_0)dx
      \right|+\left|\int \rho(\overline{P_0}-P_0)dx
      \right|\leq C\int \Psi dx.
    \end{equation}
When $P_0$ is monotone function on $[0,\bar{\rho}]$, using the
Lemma 5 in \cite{Vaigant}, we have
     \begin{equation}
     \overline{\rho P_0}\geq \rho \overline{P_0}.\label{2VDD-E3.18}
    \end{equation}
From (\ref{2VDD-E3.11})--(\ref{2VDD-E3.18}), we obtain
    $$
    \int\Psi dx\leq \int^t_0\int(1+ |F|)\Psi dxds.
    $$
Using (\ref{2VDD-E2.72}) and Gronwall's inequality, we get
    $$
    \Psi=0,\ (t,x)\in[0,T]\times\mathbb{R}^2,
    $$
and  (extract a subsequence)
    $$
    \rho^\delta-\tilde{\rho}\rightarrow \rho-\tilde{\rho},
    \ \textrm{ strongly in } L^k(\mathbb{R}^2\times[0,\infty)),
    $$
for all $k\in[2,\infty)$.

Thus, It is easy to show that the limit function $(\rho,u)$ are
indeed a weak solution of the system
(\ref{2VDD-E1.1})--(\ref{2VDD-E1.2}). This finishes the proof of
Theorem \ref{2VDD-T1.1}. {\hfill $\square$\medskip}

\section{Lagrangean Structure}\label{2VDD-S4}
\noindent\textbf{Proof of Theorem \ref{2VDD-T1.2}.}

\textbf{(1).} Here, we consider the case $t_0=0$. The proof of the
case $t>0$ is similar, and omit the details. We denote that
$X(t,x_0)=X(t;x_0,0)$.

To prove the existence of the integral curve $X(\cdot,x_0)$, we
first assume that $X^\delta(\cdot,x_0)$ is the corresponding
integral curve of $u^\delta$ with initial point $x_0\in
\mathbb{R}^2$,
    \begin{equation}
      X^\delta(t,x_0)=x_0+\int^t_0 u^\delta(X^\delta(s,x_0),s)ds,
      \ s\in[0,\infty).\label{2VDD-E4.1}
    \end{equation}

From (\ref{2VDD-E2.16-0}), (\ref{2VDD-E2.35})--(\ref{2VDD-E2.59})
and (\ref{2VDD-E2.62}), using a similar argument as that in the
proof of (\ref{2VDD-E3.2}), we have
 \begin{equation}
    <u^\delta(\cdot,t)>^\alpha\leq C+C\sigma^{-\frac{\alpha}{2}},\
    \alpha\in(0,1),
    \end{equation}
    \begin{equation}
      \|u^\delta(t,\cdot)\|_{L^\infty}\leq
      C+C\sigma^{-\frac{\alpha}{2}},\label{2VDD-E4.3}
    \end{equation}
and
      \begin{equation}
      \int^T_{T_1}\|u^\delta(t,\cdot)\|_{L^\infty}dt\leq
      C(T-T_1+T^{1-\frac{\alpha}{2}}-T^{1-\frac{\alpha}{2}}_1),
      \ 0\leq T_1<T<\infty.\label{2VDD-E4.4}
    \end{equation}
This implies that $X^\delta(\cdot,x_0)$ is H\"{o}lder continuous on
$[0,\infty)$, uniformly in $\delta$. Therefore, there is a
subsequence $X^{\delta_j}(\cdot,x_0)$ and a H\"{o}lder continuous
map $X(\cdot,x_0)$ such that $X^{\delta_j}(\cdot,x_0)\rightarrow
X(\cdot,x_0)$ uniformly on compacts sets in $[0,\infty)$. From this
uniform convergence and (\ref{2VDD-E3.4}), we have that
$X(\cdot,x_0)$ satisfies (\ref{2VDD-E1.26}).

Next we prove the uniqueness and continuous dependence for integral
curves of $u$. Thus, let $X_1(\cdot,y_1)$ and $X_2(\cdot,y_2)$ be
any two integral curves of $u$  with respective initial points
$y_1,y_2\in\mathbb{R}^2$ and define
    $$
    g(t)=\frac{|u(X_2(t,y_2),t)-u(X_1(t,y_1),t)|}{m(|X_2(t,y_2)-X_1(t,y_1)|)},
    $$
and
    $$
    g^\delta(t)=\frac{|u^\delta(X_2(t,y_2),t)-u^\delta(X_1(t,y_1),t)|}{m(|X_2(t,y_2)-X_1(t,y_1)|)},
    $$
for $t\in[0,\infty)$, where
    $$
    m(x)=\left\{
    \begin{array}{ll}
    x(1-\ln x), &0<x\leq 1,\\
        x,&1\leq x<\infty.
    \end{array}\right.
    $$
From (\ref{2VDD-E1.26}), we get
    \begin{equation}
      |X_2(t,y_2)-X_1(t,y_1)|\leq |y_2-y_1|+\int^t_0g(s)
      m(|X_2(s,y_2)-X_1(s,y_1)|)ds.\label{2VDD-E4.5}
    \end{equation}

Denote by $<v>_{LL}$ the log-Lipschtz seminorm of a given vector
field $v$ on $\mathbb{R}^2$:
    $$
    <v>_{LL}=\supetage{x,y\in\mathbb{R}^2}{x\neq
    y}\frac{|v(x)-v(y)|}{m(|x-y|)}.
    $$
From (\ref{2VDD-E2.24}) and Proposition 2.3.7 in \cite{Chemin}, we
have
    $$
    g^\delta(t)\leq <u^\delta(\cdot,t)>_{LL}\leq C\|u^\delta\|_{B^1_{\infty,\infty}}\leq C(
    \|u^\delta\|_{L^{2}}+\|F^\delta\|_{L^{\infty}}+\|\rho^\delta-\tilde{\rho}\|_{L^\infty}
    +\|w^\delta\|_{L^\infty}).
    $$
 From (\ref{2VDD-E1.15}), (\ref{2VDD-E2.1}), (\ref{2VDD-E2.16-0})  and (\ref{2VDD-E2.72}), we obtain
    $$
  \int^T_0  g^\delta dt\leq C(1+T).
    $$
Using (\ref{2VDD-E3.4})  and Fatou's lemma, we have
    $$
    \int^t_0g(s)ds\leq
    \liminf_{\delta\rightarrow0}\int^t_0g^\delta(s)ds\leq C(1+T).
    $$
Using Gronwall inequality in (\ref{2VDD-E4.5}), we get
    \begin{eqnarray}
      |X_2(t,y_2)-X_1(t,y_1)|&\leq&
      \exp(1-e^{-\int^t_0gds})|y_2-y_1|^{\exp(-\int^t_0gds)}\nonumber\\
        &\leq&
        \exp(1-e^{-C(1+T)})|y_2-y_1|^{\exp(-C(1+T))}.\label{2VDD-E4.6}
    \end{eqnarray}
Thus, if $y_1=y_2$, then $X_1=X_2$. There is therefore exactly one
integral curve originating from a given point at time $t=0$. From
this uniqueness, we obtain that the convergence
$X^\delta(t,y_1)\rightarrow X(t,y_1)$ uniformly on compact sets in
$[0,\infty)$ for entire sequence $\delta\rightarrow0$, independently
of $y_1$.

\textbf{(2).} We prove the injection of $X$ at first. Suppose that
$X(t,y_1)=X(t,y_2)$ for some $t>0$ and $y_1,y_2\in\mathbb{R}^2$.
Then for any $s\in[0,t)$,
    $$
    |X(s,y_1)-X(s,y_2)|\leq
    \int^t_s<u(\tau,\cdot)>_{LL}m(|X(\tau,y_1)-X(\tau,y_2)|)d\tau.
    $$
Using a similar argument as that in the proof of (\ref{2VDD-E4.6}),
we have that $X(s,y_1)=X(s,y_2)$ for all $s\in[0,t ]$.

Next we show that $X(t,\cdot)|_{\Omega}$ is an open mapping. Let
$A$ be an open subset of $\Omega$, $y_1\in A$ and
$B_{r_1}(y_1)\subset A$, $0\leq s<t$, $z_1=X(t,y_1)$. From
(\ref{2VDD-E1.26}), using a similar argument as that in the proof
of (\ref{2VDD-E4.6}), we have
    $$
    |X(s,y_1)-X(s;z,t)|\leq
    \exp(1-e^{-C(1+t)})|z_1-z|^{\exp(-C(1+t))}.
    $$
Thus, there is a sufficient small constant $r_2$ such that, if $z\in
B_{r_2}(z_1)$, then
    \begin{equation}
      |y_1-X(0;z,t)|< r_1.\label{2VDD-E4.8-0}
    \end{equation}
Thus $X(0;z,t)\in B_{r_1}(y_1)\subset A$ if $z\in B_{r_2}(z_1)$.
From the uniqueness of the integral curves of $u$, we obtain that
$z=X(t,X(0;z,t))\in X(t,\cdot)A$.  Therefore, $B_{r_2}(z_1)\subset
X(t,\cdot)A$, and $X(t,\cdot)|_{\Omega}$ is an open mapping.

\textbf{(3).} Using a similar argument as that in the proof of
(\ref{2VDD-E4.6}), we have
 \begin{eqnarray*}
      |X(t_2,y_2)-X(t_2,y_1)|&\leq&
      \exp(1-e^{-\int^{t_2}_{t_1}gds})|X(t_1,y_2)-X(t_1,y_1)|^{\exp(-\int^{t_2}_{t_1}gds)}\nonumber\\
        &\leq&
        \exp(1-e^{-C(1+T)})|X(t_1,y_2)-X(t_1,y_1)|^{\exp(-C(1+T))},
    \end{eqnarray*}
which proves part (3).

\textbf{(4).} Part (4) of Theorem \ref{2VDD-T1.2} is an immediate
consequence of part (3).{\hfill $\square$\medskip}

To prove Theorem \ref{2VDD-T1.3}, we need the following lemma.
\begin{lem}\label{2VDD-L4.1}
  Given $x\in X(t,\cdot)V$, say $x=X(t,y)$ with $y\in V$, there is
  a sequence $\delta_j\rightarrow 0$, which may depend on $x$,
  such that $X^{\delta_j}(t,\cdot)^{-1}(x)$ tends to $y$ as
  $\delta_j\rightarrow0$.
\end{lem}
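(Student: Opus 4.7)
\textbf{Proof plan for Lemma \ref{2VDD-L4.1}.} Since the approximate velocity $u^\delta$ is smooth (Proposition \ref{2VDD-P2.4}), the flow map $X^\delta(t,\cdot)\colon\mathbb{R}^2\to\mathbb{R}^2$ is a $C^1$-diffeomorphism onto $\mathbb{R}^2$ for each fixed $t\geq0$; in particular, $y_\delta:=X^\delta(t,\cdot)^{-1}(x)$ is well defined for every $\delta>0$. The strategy is: (i) show the family $\{y_\delta\}_{\delta>0}$ is bounded, so that we can extract a subsequence $y_{\delta_j}\to\tilde y$; (ii) identify $\tilde y=y$ using the uniform convergence $X^{\delta_j}\to X$ on compact sets together with a $\delta$-uniform modulus of continuity in the initial data, combined with the injectivity of $X(t,\cdot)$ established in part (2) of Theorem \ref{2VDD-T1.2}.

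For step (i), since $X^\delta(t,y_\delta)=x$, we have
\[
|y_\delta-x|=\Bigl|\int_0^t u^\delta(X^\delta(s,y_\delta),s)\,ds\Bigr|\leq \int_0^t\|u^\delta(\cdot,s)\|_{L^\infty}\,ds,
\]
and the right-hand side is bounded by $C(1+t)$ uniformly in $\delta$ by (\ref{2VDD-E4.4}). Hence $\{y_\delta\}$ lies in a fixed compact subset of $\mathbb{R}^2$, and by Bolzano--Weierstrass we can extract $\delta_j\to 0$ with $y_{\delta_j}\to\tilde y$ for some $\tilde y\in\mathbb{R}^2$.

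For step (ii), I will apply the log-Lipschitz argument used in the proof of part (1)--(3) of Theorem \ref{2VDD-T1.2} directly to the smooth field $u^\delta$: the estimate $g^\delta(s)\leq C\|u^\delta\|_{B^1_{\infty,\infty}}$ combined with the bounds on $\|u^\delta\|_{L^2}$, $\|F^\delta\|_{L^\infty}$, $\|w^\delta\|_{L^\infty}$ and $\|\rho^\delta-\tilde\rho\|_{L^\infty}$ from Propositions \ref{2VDD-P2.1}, \ref{2VDD-P2.3} and \ref{2VDD-P2.8} gives $\int_0^t g^\delta(s)\,ds\leq C(1+t)$ with $C$ independent of $\delta$. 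Running Gronwall in the log-Lipschitz form then yields the uniform estimate
\[
|X^{\delta_j}(t,y_{\delta_j})-X^{\delta_j}(t,\tilde y)|\leq \exp\!\bigl(1-e^{-C(1+t)}\bigr)\,|y_{\delta_j}-\tilde y|^{\exp(-C(1+t))},
\]
whose right-hand side tends to $0$. Since $X^{\delta_j}(t,\tilde y)\to X(t,\tilde y)$ by the uniform convergence established after (\ref{2VDD-E4.6}), and $X^{\delta_j}(t,y_{\delta_j})=x=X(t,y)$ by definition, passing to the limit gives $X(t,\tilde y)=X(t,y)$. The injectivity of $X(t,\cdot)$ from part (2) of Theorem \ref{2VDD-T1.2} then forces $\tilde y=y$, completing the proof.

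The main obstacle is the second step: one needs that the log-Lipschitz bound for the approximate velocities $u^\delta$ is truly uniform in $\delta$, i.e.\ that every quantity appearing in the estimate for $g^\delta$ is controlled by the a priori bounds of Section \ref{2VDD-S2} with constants independent of the mollification parameter. Once that is confirmed (which is precisely the content of Propositions \ref{2VDD-P2.1}--\ref{2VDD-P2.8} as applied in Section \ref{2VDD-S3}), the rest of the argument is a routine compactness-plus-uniqueness statement.
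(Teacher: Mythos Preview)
Your proof is correct, but it follows a different route from the paper's. The paper works with the \emph{backward} integral curves $X^\delta(s;x,t)=x-\int_s^t u^\delta(X^\delta(\tau;x,t),\tau)\,d\tau$ as maps $[0,t]\to\mathbb{R}^2$, observes they are equi-H\"older in $s$ (by the same bound (\ref{2VDD-E4.4}) you invoke), and extracts via Arzel\`a--Ascoli a subsequence converging uniformly to a curve $\tilde X$; this limit curve is then an integral curve of $u$ through $x$ at time $t$, hence equals $X(\cdot,y)$ by the \emph{uniqueness} statement in Theorem~\ref{2VDD-T1.2}(1), and evaluating at $s=0$ gives $y^{\delta_j}\to y$.

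You instead take compactness in $\mathbb{R}^2$ (only of the endpoints $y_\delta$), then push the relation $X^{\delta_j}(t,y_{\delta_j})=x$ to the limit by combining the $\delta$-uniform forward log-Lipschitz estimate with the pointwise convergence $X^{\delta_j}(t,\tilde y)\to X(t,\tilde y)$, and finally appeal to the \emph{injectivity} of $X(t,\cdot)$ from Theorem~\ref{2VDD-T1.2}(2). Both arguments rest on the same underlying ingredient (the $\delta$-uniform bound $\int_0^t g^\delta\,ds\le C(1+t)$), and since Lemma~\ref{2VDD-L4.1} is placed after the full proof of Theorem~\ref{2VDD-T1.2}, your use of part~(2) is legitimate. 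Your route is a bit more elementary in that it avoids function-space compactness; the paper's route has the minor bonus of yielding convergence of the entire backward trajectory, not just its endpoint, though that extra information is not needed here.
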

\begin{proof}
  Using the argument as that in the proof of Part (1) of Theorem
  \ref{2VDD-T1.2}, we have that integral curves $X^\delta(s;x,t)$
  of the approximate velocity field $u^\delta$, defined by
    $$
    X^\delta(s;x,t)=x-\int^t_s u^\delta(X^\delta(\tau;x,t),\tau)d
    \tau,
    $$
  are H\"{o}lder continuous in $s\in [0,t]$, uniformly with respect to
  $\delta$. Therefore, there is a sequence $\delta_j\rightarrow0$
  and a map $\tilde{X}\in C([0,t];\mathbb{R}^2)$ such that
  $X^{\delta_j}(\cdot;x,t)$ converges uniformly to
  $\tilde{X}(\cdot)$, which satisfies
    $$
    \tilde{X}(s)=x-\int^t_su(\tilde{X}(\tau),\tau)d\tau,
    \ 0\leq s\leq t.
    $$
  From the uniqueness of integral curves proved in Part (1) of
  Theorem \ref{2VDD-T1.2}, we have that $\tilde{X}(t)=X(t,y)$.
  Taking $s=0$, from (\ref{2VDD-E3.4}) and (\ref{2VDD-E4.3}), we get that
  $y^{\delta_j}=X^{\delta_j}(t,\cdot)^{-1}(x)$ converges to $y$ as
  $\delta_j$ tends to zero.
\end{proof}

\noindent\textbf{Proof of Theorem \ref{2VDD-T1.3}.}

Applying a standard maximum principle to the mass equation, using
a similar argument as that in the proof of Proposition
\ref{2VDD-P2.3}, we have
     $$
    \rho^\delta(X^\delta(t,y),t)\geq \underline{\rho}^-,
    $$
for any $y\in V$, and all $\delta$ sufficiently small. Let
$x=X(t,y)\in V^t$ and $y^\delta=X^{\delta}(t,\cdot)^{-1}(x)$. From
Lemma \ref{2VDD-L4.1}, we have that there is a sequence $\delta_j$
tending to zero such that $y^{\delta_j}$ tends to $y\in V$. Then,
for all sufficient small $\delta_j$, we get
    $$
  y^{\delta_j}\in V\ \textrm{ and }\ \rho^{\delta_j}(x,t)
   =\rho^{\delta_j}(X^{\delta_j}(t,y^{\delta_j}),t)\geq \underline{\rho}^-.
    $$
From the convergence $\rho^\delta\rightarrow\rho$ (which holds for
a.a. $x$), we obtain that
      $$
   \rho(x,t)=\rho(X(t,y),t)\geq \underline{\rho}^-,
    $$
for all $y\in V$. From Part (2) of Theorem \ref{2VDD-T1.2}, we can
finish the proof of this theorem.    {\hfill $\square$\medskip}

\noindent\textbf{Proof of Theorem \ref{2VDD-T1.4}.}

For any $y\in U$, there is a sufficient small $\delta_0$ such that
    $$
    \mathrm{dist}(y,\partial U)\geq 2\delta_0.
    $$
Let $U_{\delta_0}=\{x\in U|\ \mathrm{dist}(x,\partial U)\geq
\delta_0\}$. Then, we have
    $$
        \rho^\delta_0|_{U_{\delta_0}}=\delta,\ \forall\ \delta\leq \delta_0.
    $$
From (\ref{2VDD-E2.72}) and (\ref{2VDD-E2.36}), we have
     \begin{eqnarray*}
 \Lambda(\rho^\delta(X^\delta(t,z),t))&\leq&
 \Lambda(\rho^\delta_0(z))-\int^t_0P(\rho^\delta(X^\delta(s,z),s))ds+tP(\tilde{\rho})+\int^t_0\|F^\delta(\cdot,s)
 \|_{L^\infty}ds\\
    &\leq& \Lambda(\delta)+C(T),
    \end{eqnarray*}
  for all $t\in[0,T]$,  $\delta\leq\delta_0$ and all $z\in U_{\delta_0}$.
 Since $\Lambda(C\delta)-\Lambda(\delta)\geq 2\mu(\ln(C\delta)-\ln\delta)=2\mu\ln
 C$, then we have
    $$
    \Lambda(\rho^\delta(X^\delta(t,z),t))\leq \Lambda(\delta)+\Lambda(C(T)\delta)-\Lambda(\delta)
    =\Lambda(C(T)\delta),
       $$
 and
    $$
    \rho^\delta(X^\delta(t,z),t)\leq C(T)\delta\leq C(T)\delta_0,
    $$
 for all $t\in[0,T]$, $\delta\leq\delta_0$ and all $z\in
 U_{\delta_0}$.  Let
$x=X(t,y)\in U^t$ and $y^\delta=X^{\delta}(t,\cdot)^{-1}(x)$. From
Lemma \ref{2VDD-L4.1}, we have that there is a sequence $\delta_j$
tending to zero such that $y^{\delta_j}$ tends to $y\in
U_{\delta_0}$. Then, for all sufficient small $\delta_j$, we get
    $$
  y^{\delta_j}\in U_{\delta_0}\textrm{ and }  \rho^{\delta_j}
  (x,t)=\rho^\delta(X^{\delta_j}(t,y^{\delta_j}),t)\leq C(T)\delta_0,
    $$
   for all $t\in[0,T]$. From the convergence $\rho^\delta\rightarrow\rho$ (which holds for
a.a. $x$), we obtain that
      $$
   \rho(x,t)=\rho(X(t,y),t)\leq C(T)\delta_0,
    $$
   for all $t\in[0,T]$. Letting $\delta_0\rightarrow0$, we get
   that $\rho(X(t,y),t)=0$   for all $t\in[0,T]$. From Part (2) of Theorem \ref{2VDD-T1.2}, we
   obtain $\rho(\cdot,t)|_{U^t}=0$, $t\in[0,T]$.

From (\ref{2VDD-E2.16-4}), (\ref{2VDD-E2.22}) and
(\ref{2VDD-E2.28}), we have
    $$
    \int^\infty_1\int(|\rho^\delta-\tilde{\rho}|^4+|F^\delta|^4)dxdt\leq
    C.
    $$
From the convergence of $\{\rho^\delta\}$ and $\{F^\delta\}$, we
get
     \begin{equation}
    \int^\infty_1\int(|\rho-\tilde{\rho}|^4+|F|^4)dxdt\leq
    C.\label{2VDD-E4.8}
     \end{equation}
From (\ref{2VDD-E2.25}), we have
    $$
    \int|\rho^\delta-\tilde{\rho}|^4(x,t)dx\leq
    \int|\rho^\delta-\tilde{\rho}|^4(x,s)dx+C\int^{N+1}_N\int|F^\delta|^4dxdt,
    $$
where $s,t\in[N,N+1]$, $N>1$. Integrating it with $s$ in
$[N,N+1]$, we obtain
      $$
    \sup_{t\in[N,N+1]}\int|\rho^\delta-\tilde{\rho}|^4(x,t)dx\leq
    C\int^{N+1}_N\int(|\rho^\delta-\tilde{\rho}|^4+|F^\delta|^4)dxdt.
    $$
From the convergence of $\{\rho^\delta\}$ and $\{F^\delta\}$, we
get
      $$
    \sup_{t\in[N,N+1]}\int|\rho-\tilde{\rho}|^4(x,t)dx\leq
    C\int^{N+1}_N\int(|\rho-\tilde{\rho}|^4+|F|^4)dxdt.
    $$
Letting $N\rightarrow\infty$, using (\ref{2VDD-E4.8}), we can easily
obtain (\ref{2VDD-E1.28})--(\ref{2VDD-E1.29}).
 {\hfill $\square$\medskip}

\section{Propagation of Singularities}\label{2VDD-S5}

Before proving Theorem \ref{2VDD-T1.5}, using the similar method in
\cite{Hoff2008}, we give the following three lemmas.

\begin{lem}\label{2VDD-L5.1}
  Given $x_0\in\mathbb{R}^2$ and $R>0$, there are positive constants $\delta_0$ and $r_0$,
  and a subsequence $\delta\equiv \delta_j\rightarrow 0$, such
  that $X^{\delta}(t,\cdot)^{-1}B_{r_0}(x_0^t)\subset B_R(x_0)$
  for all $\delta\leq \delta_0$.
\end{lem}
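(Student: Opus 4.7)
The key input is a $\delta$-uniform backward bi-H\"older bound for the approximate flow maps $X^\delta(t,\cdot)$, which follows from the $\delta$-uniform log-Lipschitz control of the velocities $u^\delta$ that was already used in the proof of Theorem \ref{2VDD-T1.2}(1). Precisely, the estimate
$$
g^\delta(s):=<u^\delta(\cdot,s)>_{LL}\le C\|u^\delta\|_{B^1_{\infty,\infty}}\le C\bigl(\|u^\delta\|_{L^2}+\|F^\delta\|_{L^\infty}+\|w^\delta\|_{L^\infty}+\|\rho^\delta-\tilde{\rho}\|_{L^\infty}\bigr)
$$
together with (\ref{2VDD-E1.15}), (\ref{2VDD-E2.1}), (\ref{2VDD-E2.16-0}) and (\ref{2VDD-E2.72}) yields $\int_0^t g^\delta(s)\,ds\le C(1+t)$ with $C$ independent of $\delta$.

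Fix $x_0\in\mathbb{R}^2$ and, for an arbitrary $z\in\mathbb{R}^2$, set $\phi^\delta(s):=|X^\delta(s,z)-X^\delta(s,x_0)|$. Subtracting the integral equations defining $X^\delta(\cdot,z)$ and $X^\delta(\cdot,x_0)$ and using the definition of the log-Lipschitz seminorm gives the reverse-time integral inequality
$$
\phi^\delta(s)\le\phi^\delta(t)+\int_s^t g^\delta(\tau)\,m(\phi^\delta(\tau))\,d\tau,\qquad 0\le s\le t.
$$
Osgood's inequality, applied to this backwards in time exactly as the forward version was used to derive (\ref{2VDD-E4.6}), produces the two-sided bound
$$
|z-x_0|\le\exp\!\bigl(1-e^{-C(1+t)}\bigr)\,|X^\delta(t,z)-X^\delta(t,x_0)|^{e^{-C(1+t)}},
$$
valid uniformly in $\delta$ whenever the right-hand side is smaller than $1$.

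With this estimate in hand the lemma becomes essentially a triangle-inequality computation. By Theorem \ref{2VDD-T1.2}(1) the approximate flow satisfies $X^\delta(t,x_0)\to X(t,x_0)=x_0^t$ as $\delta\to 0$ along the subsequence fixed in Section \ref{2VDD-S3}. Given $R>0$, first choose $r_0>0$ so small that $\exp(1-e^{-C(1+t)})(2r_0)^{e^{-C(1+t)}}<R$, then pick $\delta_0$ such that $|X^\delta(t,x_0)-x_0^t|<r_0$ for every $\delta\le\delta_0$ in that subsequence. For any $w\in B_{r_0}(x_0^t)$ and $z=X^\delta(t,\cdot)^{-1}(w)$, the triangle inequality gives $|X^\delta(t,z)-X^\delta(t,x_0)|<2r_0$, and the backward bi-H\"older bound above then forces $|z-x_0|<R$, which is precisely the desired inclusion $X^\delta(t,\cdot)^{-1}B_{r_0}(x_0^t)\subset B_R(x_0)$.

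The only delicate ingredient is the backward Osgood estimate with constants independent of $\delta$; this is a short symmetrization of the forward Osgood argument used for (\ref{2VDD-E4.6}) and relies on the $\delta$-uniform $L^1_t$ bound on $g^\delta$. All remaining steps are routine triangle inequalities combined with the already-established pointwise convergence $X^\delta(t,x_0)\to x_0^t$; no further extraction of a subsequence beyond the one fixed in Section \ref{2VDD-S3} is needed.
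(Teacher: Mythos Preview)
Your argument is correct and rests on the same core idea as the paper's proof: the $\delta$-uniform backward Osgood (bi-H\"older) estimate for the approximate flows, combined with convergence of an anchor point and a triangle inequality. The only organizational difference is the choice of anchor. The paper fixes the terminal point $x_0^t$, invokes Lemma~\ref{2VDD-L4.1} to extract a subsequence along which $y_0^\delta:=X^\delta(t,\cdot)^{-1}(x_0^t)\to x_0$, and then runs the backward Osgood estimate between the curves ending at $x$ and at $x_0^t$ to bound $|y^\delta-y_0^\delta|$. You instead fix the initial point $x_0$, use the forward convergence $X^\delta(t,x_0)\to x_0^t$ already obtained in the proof of Theorem~\ref{2VDD-T1.2}(1), and run the backward Osgood estimate between the curves starting at $z$ and at $x_0$. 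Your variant is slightly more economical, since it bypasses the separate subsequence extraction of Lemma~\ref{2VDD-L4.1}; the paper's version has the minor advantage that the same anchor $y_0^\delta$ is reused in the subsequent Lemmas~\ref{2VDD-L5.2}--\ref{2VDD-L5.3}.
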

\begin{proof}
  By Lemma \ref{2VDD-L4.1}, there is a sequence
  $\delta\rightarrow0$ such that
  $y_0^\delta:=X^\delta(t,\cdot)^{-1}(x_0^t)$ tends to $x_0$ as
  $\delta\rightarrow0$. Therefore, it suffices to show that there
  is a positive constant $r_0$ such that
  $|X^\delta(t,\cdot)^{-1}(x)-y_0^\delta|<R$ for sufficiently
  small $\delta$ and for all $x\in B_{r_0}(x_0^t)$.

  Letting
  $y^\delta=X^\delta(t,\cdot)^{-1}(x)$, from (\ref{2VDD-E4.1}), we have
    $$
        X^\delta(s;x,t)-X^\delta(s;x^t_0,t)
    =x-\int^t_s
    u^\delta(X^\delta(\tau;x,t),\tau)d\tau
    -x^t_0+\int^t_s
    u^\delta(X^\delta(\tau;x^t_0,t),\tau)d\tau,
    $$
for any $s\in[0,t)$. Using a similar argument as that in the proof
(\ref{2VDD-E4.8-0}), we have
    $$
    |y^\delta-y_0^\delta|\leq\exp(1-e^{-C(1+t)})|x_0^t-x|^{\exp(-C(1+t))}<R
    $$
     of sufficiently small radius $r_0$. Then, we
finish the proof.
\end{proof}

\begin{lem}\label{2VDD-L5.2}
  Let $\delta\rightarrow 0$ be the sequence of Lemma
  \ref{2VDD-L5.1}, then for all $r_0>0$, $X^\delta(t,\cdot)^{-1}\rightarrow
  X(t,\cdot)^{-1}$ uniformly on $B_{r_0}(x_0^t)$.
\end{lem}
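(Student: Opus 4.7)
The plan is to combine equicontinuity of the family $\{X^\delta(t,\cdot)^{-1}\}$ with pointwise convergence on $B_{r_0}(x_0^t)$, and then conclude uniform convergence by the standard Arzel\`a--Ascoli-type argument.

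\textbf{Step 1: Uniform equicontinuity of the inverse flows.} Writing $Y^\delta(s;x,t)=X^\delta(s;x,t)$ for the backward trajectory with terminal value $x$ at time $t$, we have
$$Y^\delta(s;x,t)=x-\int_s^t u^\delta(Y^\delta(\tau;x,t),\tau)\,d\tau.$$
Applying the log-Lipschitz estimate for $u^\delta$ (established uniformly in $\delta$ in the proof of Theorem \ref{2VDD-T1.2}(1) via $\|u^\delta\|_{L^2}+\|F^\delta\|_{L^\infty}+\|\rho^\delta-\tilde\rho\|_{L^\infty}+\|w^\delta\|_{L^\infty}$) together with the Osgood--Gronwall argument that produced (\ref{2VDD-E4.6}), I would derive, for $x_1,x_2\in B_{r_0}(x_0^t)$ and $\delta\le\delta_0$,
$$\bigl|X^\delta(t,\cdot)^{-1}(x_1)-X^\delta(t,\cdot)^{-1}(x_2)\bigr|
\leq \exp\!\bigl(1-e^{-C(1+t)}\bigr)\,|x_1-x_2|^{\exp(-C(1+t))}.$$
This gives equi-H\"older-continuity of $\{X^\delta(t,\cdot)^{-1}\}_{\delta\le\delta_0}$ on $B_{r_0}(x_0^t)$ with constants independent of $\delta$.

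\textbf{Step 2: Pointwise convergence along the full sequence.} Fix $x\in B_{r_0}(x_0^t)$ and set $y^\delta=X^\delta(t,\cdot)^{-1}(x)$; by Lemma \ref{2VDD-L5.1} the sequence $\{y^\delta\}$ is contained in $B_R(x_0)$. For any subsequence I extract, the argument of Lemma \ref{2VDD-L4.1} produces a further subsequence $\delta_{j_k}$ along which the curves $X^{\delta_{j_k}}(\cdot;x,t)$ converge uniformly on $[0,t]$ to a curve $\tilde X(\cdot)$ satisfying $\tilde X(s)=x-\int_s^t u(\tilde X(\tau),\tau)\,d\tau$; by the uniqueness of integral curves in Theorem \ref{2VDD-T1.2}(1), $\tilde X(0)=X(t,\cdot)^{-1}(x)$. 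Since the limit does not depend on the extracted subsequence, the full sequence $y^\delta$ from Lemma \ref{2VDD-L5.1} converges to $X(t,\cdot)^{-1}(x)$.

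\textbf{Step 3: Upgrading to uniform convergence.} Given $\varepsilon>0$, the equi-H\"older bound from Step 1 yields $\eta>0$ such that any two points of $B_{r_0}(x_0^t)$ within distance $\eta$ have $X^\delta(t,\cdot)^{-1}$-images within $\varepsilon/3$ (uniformly in $\delta\le\delta_0$); the limit $X(t,\cdot)^{-1}$ inherits the same modulus. Covering $\overline{B_{r_0}(x_0^t)}$ by finitely many balls of radius $\eta$ and using Step 2 at each center, a standard $\varepsilon/3$ argument gives $\sup_{x\in B_{r_0}(x_0^t)}|X^\delta(t,\cdot)^{-1}(x)-X(t,\cdot)^{-1}(x)|\to 0$.

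The main obstacle is Step 2: Lemma \ref{2VDD-L4.1} produces only subsequential convergence, and one needs to know that no bifurcation occurs, i.e., every cluster point of $\{y^\delta\}$ coincides. This is exactly what the Osgood-type uniqueness of integral curves proved in Theorem \ref{2VDD-T1.2}(1) provides, and it is essential that the log-Lipschitz estimate $\int_0^t g^\delta\,ds\le C(1+t)$ is uniform in $\delta$ so that the same bound passes to the limit curve.
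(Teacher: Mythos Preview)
Your argument is correct, but it is organized differently from the paper's. The paper gives a direct quantitative estimate: for $x\in B_{r_0}(x_0^t)$, with $y^\delta=X^\delta(t,\cdot)^{-1}(x)$ and $y=X(t,\cdot)^{-1}(x)$, one writes
\[
|X^\delta(s,y^\delta)-X(s,y)|\le \int_s^t g^\delta(\tau)\,m\bigl(|X^\delta(\tau,y^\delta)-X(\tau,y)|\bigr)\,d\tau+\int_s^t |u^\delta-u|(X(\tau,y),\tau)\,d\tau,
\]
and the Osgood--Gronwall inequality then bounds $|y^\delta-y|$ by a power of $\int_0^t|u^\delta-u|(X(\tau,y),\tau)\,d\tau$; this last integral tends to zero uniformly in $x$ by (\ref{2VDD-E4.4}) near $\tau=0$ and by the locally uniform convergence $u^\delta\to u$ away from $\tau=0$, since the limit trajectories $X(\cdot,y)$ stay in a fixed compact set. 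Your route instead separates the ingredients: uniform equi-H\"older continuity of the inverse flows, pointwise convergence via the subsequence-plus-uniqueness trick, and an $\varepsilon/3$ compactness argument. The paper's approach is shorter and yields an explicit rate (a H\"older power of $\sup\int|u^\delta-u|$), which can be useful downstream; your approach is more modular and makes transparent exactly which abstract facts are being used, at the cost of not producing a rate.
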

\begin{proof}
  Let $y^\delta:=X^\delta(t,\cdot)^{-1}(x)$ and
  $y:=X(t,\cdot)^{-1}(x)$ for $x\in B_{r_0}(x_0^t)$. For $0\leq s\leq
  t$, we have
    \begin{eqnarray*}
      &&|X^\delta(s,y^\delta)-X(s,y)|\\
        &=&\left|
      x-\int^t_s u^\delta(X^\delta(\tau,y^\delta),\tau)d\tau
      -      x+\int^t_s u(X(\tau,y),\tau)d\tau
      \right|\\
        &\leq& \int^t_s|u^\delta(X^\delta(\tau,y^\delta),\tau)
        -u^\delta(X(\tau,y),\tau)|d\tau
        +\int^t_s|u(X(\tau,y),\tau)
        -u^\delta(X(\tau,y),\tau)|d\tau\\
            &\leq&\int^t_s g^\delta(\tau)m(|X^\delta(\tau,y^\delta)
        -X(\tau,y)|)d\tau
        +\int^t_s|u(X(\tau,y),\tau)
        -u^\delta(X(\tau,y),\tau)|d\tau.
    \end{eqnarray*}
Using a similar argument as that in the proof (\ref{2VDD-E4.6}), we
obtain
    \begin{eqnarray*}
      &&|y^\delta-y|=|X^\delta(0,y^\delta)-X(0,y)|\\
        &\leq&\exp(1-e^{-C(1+T)})\left(\int^t_s|u(X(\tau,y),\tau)
        -u^\delta(X(\tau,y),\tau)|d\tau\right)^{\exp(-C(1+T))}.
    \end{eqnarray*}
From (\ref{2VDD-E4.4}) and the uniform convergence of $u^\delta$
to $u$ on compact sets in $\mathbb{R}^2\times(0,\infty)$, we can
get that $y^\delta(x)\rightarrow y(x)$ uniformly on
$B_{r_0}(x_0^t)$.
\end{proof}

\begin{lem}\label{2VDD-L5.3}
  Given $r_0$ sufficiently small and given $t>0$, there is a
  nondecreasing function $\eta:[0,\infty)\rightarrow\mathbb{R}$
  satisfying $\lim_{r\rightarrow0}\eta(r)=0$ such that, for
  $\delta$ as in Lemma \ref{2VDD-L5.2} sufficient small,
    $$
    |X^\delta(t_2,y_2)
    -X^\delta(t_2,y_1)|\leq \eta(|X^\delta(t_1,y_2)
    -X^\delta(t_1,y_1)|),
    $$
  where    $t_1,t_2\in[0,t]$ and $X^\delta(t,y_1)$, $X^\delta(t,y_2)\in
  B_{r_0}(x_0^t)$.
\end{lem}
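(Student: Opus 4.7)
\textbf{Proof proposal for Lemma \ref{2VDD-L5.3}.}

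The plan is to apply a log-Osgood estimate to the separation between two approximate trajectories, using the log-Lipschitz regularity of $u^\delta$ uniformly in $\delta$. Fix $t>0$, let $y_1,y_2$ be such that $X^\delta(t,y_i)\in B_{r_0}(x_0^t)$, and set
$$
h^\delta(s)=|X^\delta(s,y_2)-X^\delta(s,y_1)|,\qquad s\in[0,t].
$$
From the defining integral equation (\ref{2VDD-E4.1}), for any $s_1,s_2\in[0,t]$,
$$
|h^\delta(s_2)-h^\delta(s_1)|\le\Bigl|\int_{s_1}^{s_2}|u^\delta(X^\delta(\tau,y_2),\tau)-u^\delta(X^\delta(\tau,y_1),\tau)|\,d\tau\Bigr|\le\Bigl|\int_{s_1}^{s_2}g^\delta(\tau)m(h^\delta(\tau))\,d\tau\Bigr|,
$$
exactly as in (\ref{2VDD-E4.5}), where $g^\delta$ is the quantity introduced in the proof of Theorem \ref{2VDD-T1.2}(1).

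The key uniform ingredient was already obtained in that proof: the pointwise bounds on $F^\delta$ and $w^\delta$ from Proposition \ref{2VDD-P2.8}, together with the energy bounds of Propositions \ref{2VDD-P2.1}--\ref{2VDD-P2.2}, give the log-Lipschitz estimate $g^\delta(\tau)\le\langle u^\delta(\cdot,\tau)\rangle_{LL}\le C(\|u^\delta\|_{L^2}+\|F^\delta\|_{L^\infty}+\|w^\delta\|_{L^\infty}+\|\rho^\delta-\tilde\rho\|_{L^\infty})$, yielding the $\delta$-uniform bound
$$
A_t:=\int_0^t g^\delta(\tau)\,d\tau\le C(1+t)
$$
for all $\delta$ small enough (as in Lemma \ref{2VDD-L5.2}).

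Next I would apply the Osgood inequality in both time directions. For $s_1\le s_2$, the differential inequality $\frac{d}{ds}h^\delta\le g^\delta(s)m(h^\delta)$ (understood in an integrated sense) combined with $\int_{h^\delta(s_1)}^{h^\delta(s_2)}\frac{dr}{m(r)}\le\int_{s_1}^{s_2}g^\delta\,d\tau$ and the explicit antiderivative $\int\frac{dr}{r(1-\ln r)}=-\ln(1-\ln r)$ for $r\in(0,1]$ gives, whenever $h^\delta(s_1)\le 1$,
$$
h^\delta(s_2)\le\exp\!\bigl(1-e^{-A_t}\bigr)\,h^\delta(s_1)^{\exp(-A_t)}.
$$
Running the same argument backward (i.e.\ reversing the roles of $s_1,s_2$ in the integral inequality above, which is symmetric in the two time endpoints) yields the analogous bound when $s_2<s_1$. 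Thus in either case
$$
|X^\delta(t_2,y_2)-X^\delta(t_2,y_1)|\le\eta\bigl(|X^\delta(t_1,y_2)-X^\delta(t_1,y_1)|\bigr),
$$
where, setting $\alpha_t=\exp(-C(1+t))\in(0,1)$ and $\kappa_t=\exp(1-e^{-C(1+t)})$,
$$
\eta(r):=\begin{cases}\kappa_t\,r^{\alpha_t},&0\le r\le 1,\\ \kappa_t\,r,&r\ge 1,\end{cases}
$$
is nondecreasing with $\eta(r)\to 0$ as $r\to 0^{+}$, and is independent of $\delta$.

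The main obstacle is purely technical: verifying that the symmetric two-sided Osgood estimate is applicable regardless of the sign of $t_2-t_1$, and that the Osgood computation can be carried out across the threshold $h^\delta=1$ (which is handled by treating the two regimes for $m$ separately, with the ``outside'' regime $h^\delta\ge 1$ giving only a standard Gronwall growth). The containment assumption $X^\delta(t,y_i)\in B_{r_0}(x_0^t)$ is used only to guarantee via Lemma \ref{2VDD-L5.1} that the relevant trajectories remain in a fixed compact set, so that the uniform-in-$\delta$ log-Lipschitz bound (and hence the uniform $A_t$) truly applies.
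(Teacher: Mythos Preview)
Your proof is correct and follows essentially the same route as the paper: write the separation $h^\delta(s)=|X^\delta(s,y_2)-X^\delta(s,y_1)|$, bound its increment by $\int g^\delta(\tau)\,m(h^\delta(\tau))\,d\tau$ via the log-Lipschitz seminorm, invoke the uniform bound $\int_0^t g^\delta\le C(1+t)$, and conclude by the Osgood--Gronwall computation of (\ref{2VDD-E4.6}) to obtain $\eta(r)=\exp(1-e^{-C(1+t)})\,r^{\exp(-C(1+t))}$. One small remark: the log-Lipschitz bound on $u^\delta$ used here is global on $\mathbb{R}^2$, so the containment hypothesis $X^\delta(t,y_i)\in B_{r_0}(x_0^t)$ is not actually needed to make $A_t$ uniform in $\delta$; it is there only because the lemma is applied in that setting in the proof of Theorem \ref{2VDD-T1.5}.
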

\begin{proof}
  We have that, for $0\leq t_1,t_2\leq t$,
    \begin{eqnarray*}
      &&|X^\delta(t_2,y_2)
    -X^\delta(t_2,y_1)|\\
            &\leq& |X^\delta(t_1,y_2)
    -X^\delta(t_1,y_1)|
    +\left|\int^{t_2}_{t_1}
    |u^\delta(X^\delta(s,y_1^\delta),s)
    -u^\delta(X^\delta(s,y_2^\delta),s)|ds
    \right|\\
         &\leq& |X^\delta(t_1,y_2)
    -X^\delta(t_1,y_1)|
    +\int^{t}_{0}
    g^\delta(s)m(|X^\delta(s,y_1^\delta)
    -X^\delta(s,y_2^\delta)|)ds.
    \end{eqnarray*}
Using a similar argument as that in the proof (\ref{2VDD-E4.6}), we
obtain
    $$
    |X^\delta(t_2,y_2)
    -X^\delta(t_2,y_1)|\leq \exp(1-e^{-C(1+t)})|X^\delta(t_1,y_2)
    -X^\delta(t_1,y_1)|^{\exp(-C(1+t))}.
    $$
This finishes the proof of this lemma.
\end{proof}

\noindent\textbf{Proof of Theorem \ref{2VDD-T1.5}.}

First, we show that
    \begin{equation}
      \mathrm{osc}(\rho;x_0^t,E^t)\leq \mathrm{osc}(\rho;x_0^t,(E\cap
B)^t)=0,\label{2VDD-E5.1}
    \end{equation}
for some open ball $B$ centered at $x_0$, where $(E\cap
B)^t:=X(t,\cdot)(E\cap B)$. In deed,  since the map
$X(t,\cdot)^{-1}: V^t\rightarrow V$ is continuous by Theorem
\ref{2VDD-T1.3} (2), there is a positive constant $r_B$ such that
$B_{r_1}(x_0^t)\subset B^t$ for $0<r_1\leq r_B$. This implies that
$E^t\cap B_{r_1}(x_0^t)\subset (E\cap B)^t$, so that, for any
$R>0$ and $r_1\leq\min\{R,r_B\}$,
    $$
    (\mathrm{esssup}-\mathrm{essinf})\rho(\cdot,t)|_{E^t\cap B_{r_1}(x_0^t)}
    \leq (\mathrm{esssup}-\mathrm{essinf})\rho(\cdot,t)|_{(E\cap B)^t\cap
    B_R(x_0^t)}.
    $$
Letting first $r_1\rightarrow0$ and then $R\rightarrow0$, we can
obtain (\ref{2VDD-E5.1}).

\noindent\textbf{Case 1.} If
$\lim_{R\rightarrow0}\mathrm{essinf}\rho_0|_{E\cap B_R(x_0)}=0$,
then for any $\delta_0>0$, there exists $r_0>0$ such that
$\rho_0|_{E\cap B_{r_0}(x_0)}\leq \delta_0$. Using a similar
argument as that in the proof of Theorem \ref{2VDD-T1.4}, we have
    $$\rho|_{(E\cap B_{r_0}(x_0))^t}\leq C(T)\delta_0,$$
and
    $$\mathrm{osc}(\rho(\cdot,t);x^t_0,(E\cap B_{r_0}(x_0))^t)\leq C(T)\delta_0,
    \ t\in(0,T].$$
From (\ref{2VDD-E5.1}), we have
    $$\mathrm{osc}(\rho(\cdot,t);x^t_0,E^t)\leq C(T)\delta_0,
     \ 0<t\leq T.$$
Letting $\delta_0\rightarrow0$, we get
    $$\mathrm{osc}(\rho(\cdot,t);x^t_0,E^t)=0.$$

\noindent\textbf{Case 2.} If
$\lim_{R\rightarrow0}\mathrm{essinf}\rho_0|_{E\cap B_R(x_0)}>0$,
then there exist $r_0>0$ and $\underline{\rho}>0$ such that
$\rho_0|_{E\cap B_{r_0}(x_0)}\geq \underline{\rho}$.

 Let $B$ be an
open ball centered at $x_0$ such that $\overline{B}\subset
B_{r_0}(x_0)$. By Theorem \ref{2VDD-T1.2} (2) and Lemma
\ref{2VDD-L5.1}, we can choose a positive constant $r_1\in(0,r_0)$
such that, if $x_j=X(t,y_j)=X^\delta(t,y^\delta_j)\in
B_{r_1}(x_0^t)\cap (E\cap B)^t$, $j=1,2$, then $y_j,y_j^\delta\in
B\cap E$. From the mass equation (\ref{2VDD-E1.1}) for
$(\rho^\delta,u^\delta)$, we obtain
   \begin{eqnarray*}
\frac{d}{dt}\Lambda(\rho^\delta(X^\delta(t;\cdot,0),t))|^{y_2}_{y_1}
 & =&-P(\rho^\delta(X^\delta(t;\cdot,0),t))|^{y_2}_{y_1}-
    F^\delta(X^\delta(t;\cdot,0),t)|^{y_2}_{y_1}\\
         & =&-a(t)\Lambda(\rho^\delta(X^\delta(t;\cdot,0),t))|^{y_2}_{y_1}-
    F^\delta(X^\delta(t;\cdot,0),t)|^{y_2}_{y_1},
   \end{eqnarray*}
 where $a(t)=\frac{P(\rho^\delta(X^\delta(t;\cdot,0),t))|^{y_2}_{y_1}}
 {\Lambda(\rho^\delta(X^\delta(t;\cdot,0),t))|^{y_2}_{y_1}}$. From
 (\ref{2VDD-E1.15}) and
 Theorem \ref{2VDD-T1.3}, we have $|a(t)|\leq C$. Using Gronwall's
 inequality, we get
       \begin{eqnarray}
        &&|\Lambda(\rho^\delta(x_2,t))-\Lambda(\rho^\delta(x_1,t))|\label{2VDD-E5.2}\\
 &\leq&C(T)|\Lambda(\rho^\delta_0(y_2^\delta))-\Lambda(\rho^\delta_0(y_1^\delta))|
 +C(T)\int^t_0|
    F^\delta(X^\delta(s,y_2^\delta),s)-F^\delta(X^\delta(s,y_1^\delta),s)|ds\nonumber\\
 &\leq&C(T)|\Lambda(\rho^\delta_0(y_2^\delta))-\Lambda(\rho^\delta_0(y_1^\delta))|
 +C(T)\sup_{0\leq s\leq t}|
    X^\delta(s,y_2^\delta)-X^\delta(s,y_1^\delta)|^\frac{q}{2+q}\int^t_0
    g_{F^\delta}ds,\nonumber
   \end{eqnarray}
where
$g_{F^\delta}(s)=<F^\delta(\cdot,s)>^\frac{q}{2+q}_{\mathbb{R}^2}$.
Using a similar argument as that in Proposition \ref{2VDD-P2.8}, we
have
    \begin{equation}
    \int^T_0 g_{F^\delta}ds\leq C(T).\label{2VDD-E5.3}
    \end{equation}
 From Lemma
\ref{2VDD-L5.3}, we obtain
   \begin{equation}
   \sup_{0\leq s\leq t} |
    X^\delta(s,y_2^\delta)-X^\delta(s,y_1^\delta)|^\frac{q}{2+q}
    \leq (\eta(|x_2-x_1|))^\frac{q}{2+q}\leq (\eta(2r_1))^\frac{q}{2+q}.\label{2VDD-E5.4}
    \end{equation}

Since $y_j\in E\cap B$, there is a $r_2>0$ such that
$B_{r_2}(y_j)\subset E\cap B$. By Lemma \ref{2VDD-L5.2},
$y_j^\delta\in B_{\frac{r_2}{2}}(y_j)$ for $\delta$ sufficiently
small. Thus, if $\delta<\frac{r_2}{2}$, then
$|y-y_j|\leq|y-y_j^\delta| +|y_j^\delta-y_j|\leq
\delta+\frac{r_2}{2}<r_2$ for all $y\in B_\delta(y_j^\delta)$;
that is $B_\delta(y_j^\delta)\subset B_{r_2}(y_j)\subset E\cap B$.
Also, by lemmas \ref{2VDD-L5.2}-\ref{2VDD-L5.3}, for $y\in
B_\delta(y_j^\delta)$,
    $$
        |y-x_0|\leq |y-y_j^\delta|+|y_j^\delta-y_j|+|y_j-x_0|
        \leq \delta+|y_j^\delta-y_j|+\eta(r_1)\leq 2\eta(r_1),
    $$
that is, $B_\delta(y_j^\delta)\subset B_{2\eta(r_1)}(x_0)$ for
$\delta$ sufficiently small. Thus, $B_\delta(y_j^\delta)\subset
(E\cap B)\cap B_{2\eta(r_1)}(x_0)$. Then, since
$\rho_0^\delta(y_j^\delta)=\int_{B_\delta(y_j^\delta)}j_\delta(y_j^\delta-y)\rho_0^\delta
(y)dy+\delta$, we have
      \begin{equation}
    \mathrm{essinf}\rho_0|_{(E\cap B)\cap B_{2\eta(r_1)}(x_0)}
    \leq \rho_0^\delta(y_j^\delta)-\delta\leq
     \mathrm{esssup}\rho_0|_{(E\cap B)\cap B_{2\eta(r_1)}(x_0)}.\label{2VDD-E5.5}
        \end{equation}
From (\ref{2VDD-E5.2})--(\ref{2VDD-E5.5}), we obtain
    $$
    |\Lambda(\rho^\delta(x_2,t))-\Lambda(\rho^\delta(x_1,t))|
    \leq C(T)(\mathrm{esssup}-\mathrm{essinf})\rho_0|_{(E\cap B)\cap B_{2\eta(r_1)}(x_0)}
    +C(T)\eta(r_1)^\frac{q}{2+q},
    $$
for $x_1,x_2\in (E\cap B)^t\cap B_{r_1}(x_0^t)$, sufficiently small
$r_1>0$ and sufficiently small $\delta>0$. Taking the limit as
$\delta\rightarrow0$, we get
    $$
     |\Lambda(\rho(x_2,t))-\Lambda(\rho(x_1,t))|
    \leq C(T)(\mathrm{esssup}-\mathrm{essinf})\rho_0|_{(E\cap B)\cap B_{2\eta(r_1)}(x_0)}
    +C(T)\eta(r_1)^\frac{q}{2+q},
    $$
for $x_1,x_2\in (E\cap B)^t\cap B_{r_1}(x_0^t)$ and sufficiently
small $r_1>0$. Then, we have
    $$
     (\mathrm{esssup}-\mathrm{essinf})\rho|_{(E\cap B)^t\cap B_{r_1}(x_0^t)}
    \leq C(T)(\mathrm{esssup}-\mathrm{essinf})\rho_0|_{(E\cap B)\cap B_{2\eta(r_1)}(x_0)}
    +C(T)\eta(r_1)^\frac{q}{2+q},
    $$
for  sufficiently small $r_1>0$. Letting $r_1\rightarrow0$, using
the condition osc$(\rho_0;x_0,E\cap B)=0$, we obtain
osc$(\rho(\cdot,t);x_0^t,(E\cap B)^t)=0$. From (\ref{2VDD-E5.1}), we
complete the proof of Theorem \ref{2VDD-T1.5}.
 {\hfill
$\square$\medskip}

\noindent\textbf{Proof of Theorem \ref{2VDD-T1.6}.}

It follows immediately from Theorem \ref{2VDD-T1.5} that, under the
conditions of Theorem \ref{2VDD-T1.6} (a), $\rho(\cdot,t)$ has a
one-sided limit with respect to
$\mathcal{M}^t:=X(t,\cdot)\mathcal{M}$ at the point $X(t,x_0)$.
Then, since $F=(\lambda+\mu)\mathrm{div}u-P(\rho)+P(\tilde{\rho})$
is H\"{o}lder continuous for $t>0$ by (\ref{2VDD-E1.22}),
$\mathrm{div}u(\cdot,t)$ has a one-sided limit  with respect to
$\mathcal{M}^t:=X(t,\cdot)\mathcal{M}$ at the point $X(t,x_0)$. If
these limits exist from both sides of $\mathcal{M}$ at $x_0$, using
the H\"{o}lder continuity of $F$, then the Rankine-Hugoniot
condition (\ref{2VDD-E1.31}) holds at the point $X(t,x_0)$.

To prove the regularity assertions in Theorem \ref{2VDD-T1.6} (a),
we consider the following two case.

\textbf{Case 1.} If $\rho_0(x_0+)=0$, then using a similar argument
as that in the proof of Theorem \ref{2VDD-T1.4}, we have
$\rho(X(t,x_0)+,t)=0$ for each $t>0$. Thus, the maps  $t\mapsto
\rho(X(t,x_0)+,t)$ and $t\mapsto P(\rho(X(t,x_0)+,t))$ are
  in $C^1([0,\infty))$.

\textbf{Case 2.} If $\rho_0(x_0+)>0$, then there exist
$\underline{\rho}>0$ and  $r_0>0$, such that $\rho_0|_{E_+\cap
B_{r_0}(x_0)}\geq \underline{\rho}$. Denote
$\Lambda^\delta:=\Lambda(\rho^\delta)$ and
$\Lambda:=\Lambda(\rho)$. From (\ref{2VDD-E1.18}), we have
    $$
    \Lambda^\delta_t+u^\delta\cdot\nabla \Lambda^\delta
    =-F^\delta-P(\rho^\delta)+P(\tilde{\rho}).
    $$
We will choose a sequence of smooth test functions
$\{\phi^{\delta,h}\}_{\delta,h>0}$  satisfying the equation
$\phi^{\delta,h}_t+\mathrm{div}(\phi^{\delta,h}u^\delta)=0$, so
that
    \begin{equation}
     \left. \int\phi^{\delta,h}\Lambda^\delta dx\right|^t_0
      =-\int^t_0\int \phi^{\delta,h}(F^\delta+P(\rho^\delta)-P(\tilde{\rho}))
      dxds.\label{2VDD-E5.6}
    \end{equation}
To construct $\phi^{\delta,h}$, we let $\{x_h\}_{h>0}$ be a
sequence in $E_+$ and $\{r_h\}$ a sequence of positive numbers
such that $x_h\rightarrow x_0$ and $r_h\rightarrow0$ as
$h\rightarrow0$, and $B_{2r_h}(x_h)\subset E_+\cap B_{r_0}(x_0)$.
Then, define $\phi^{\delta,h}$ the solution of the equation
    $$
    \left\{\begin{array}{l}
        \phi^{\delta,h}_t+\mathrm{div}(\phi^{\delta,h}u^\delta)=0,\\
            \phi^{\delta,h}|_{t=0}=\phi^h_0,
    \end{array}
    \right.
    $$
where $\phi^h_0$ is a smooth function with support in
$B_{r_h}(x_h)$, $\int \phi^h_0dx=1$ and $0\leq \phi^h_0\leq C^h$. It
follows that $\phi^{\delta,h}$ has support in
$X^\delta(t,\cdot)B_{r_h}(x_h)$, $\int\phi^{\delta,h}(x,t)dx=1$ for
$t>0$, and $0\leq \phi^{\delta,h}\leq C^h(T)$ for $0\leq t\leq T$.
This last assertion is a consequence of (\ref{2VDD-E2.72}).

In the following, we will take limits as $\delta$ and $h$ go to
zero in (\ref{2VDD-E5.6}). Form Lemma \ref{2VDD-L5.2}, we have
$X^\delta$ converges uniformly to $X$ on $[0,t]\times
B_{r_0}(x_0)$ as $\delta\rightarrow0$. Then, we can easily prove
that, for each $h>0$, there is a $\delta_0(h)>0$ such that
    \begin{equation}
      X^\delta(s,\cdot)B_{r_h}(x_h)\subset X(s,\cdot)
      B_{2r_h}(x_h),
      \ 0<\delta<\delta_0(h).\label{2VDD-E5.7}
    \end{equation}

We now obtain uniform bounds of the three terms in
(\ref{2VDD-E5.6}). For fixed $t>0$, we get
    $$
    \int\phi^{\delta,h}\Lambda^\delta dx
    =\int\phi^{\delta,h}(\Lambda^\delta-\Lambda)dx+\int\phi^{\delta,h}\Lambda dx
    :=I+II.
    $$
From the convergence of $\rho^\delta$ and (\ref{2VDD-E1.15}), we
obtain that $I\rightarrow0$ as $\delta\rightarrow0$, and $I$ is
bounded by $C^h(T)$. Also, from (\ref{2VDD-E5.7}), we have
    $$
    \mathrm{essinf}
    \Lambda(\cdot,t)|_{X(t,\cdot)B_{2r_h}(x_h)}\leq II\leq \mathrm{esssup}
    \Lambda(\cdot,t)|_{X(t,\cdot)B_{2r_h}(x_h)},
    $$
 for sufficiently
    small $\delta$. Thus, there exists $\delta_0(h)>0$
        \begin{equation}
           \mathrm{essinf}
    \Lambda(\cdot,t)|_{X(t,\cdot)B_{2r_h}(x_h)}-h\leq \int\phi^{\delta,h}\Lambda^\delta dx\leq \mathrm{esssup}
    \Lambda(\cdot,t)|_{X(t,\cdot)B_{2r_h}(x_h)}+h,\label{2VDD-E5.8}
        \end{equation}
 for  $\delta\leq \delta_0(h)$.

 Let
 $\Lambda(x_0^t+,t):=\Lambda(\rho(x_0^t+,t))$. Then,
    $$
    \Lambda(x_0^t+,t)=\lim_{r'\rightarrow0}\mathrm{esssup}
    \Lambda(\cdot,t)|_{B_{r'}(x_0^t)\cap E^t_+}
    =\lim_{r'\rightarrow0}\mathrm{essinf}
    \Lambda(\cdot,t)|_{B_{r'}(x_0^t)\cap E^t_+}.
    $$
For each $r'>0$, we can choose $h_{r'}>0$ such that
$X(t,\cdot)B_{2r_h}(x_h)\subset B_{r'}(x_0^t)\cap E^t_+$ for all
$h\leq h_{r'}$. Thus, for each $r'>0$ and all $h\leq h_{r'}$,
    $$
    \mathrm{essinf}\Lambda(\cdot,t)|_{B_{r'}(x_0^t)\cap E^t_+}
    \leq \mathrm{essinf} \Lambda(\cdot,t)|_{X(t,\cdot)B_{2r_h}(x_h)}
    \leq     \mathrm{esssup}\Lambda(\cdot,t)|_{B_{r'}(x_0^t)\cap
    E^t_+}.
    $$
Taking first the liminf and limsup as $h\rightarrow0$ and then the
limit as $r'\rightarrow0$, we obtain that
    $$
    \lim_{h\rightarrow0}\mathrm{essinf}
    \Lambda(\cdot,t)|_{X(t,\cdot)B_{2r_h}(x_h)}=\Lambda(x_0^t+,t).
    $$
Similarly, we have
     $$
    \lim_{h\rightarrow0}\mathrm{esssup}
    \Lambda(\cdot,t)|_{X(t,\cdot)B_{2r_h}(x_h)}=\Lambda(x_0^t+,t).
    $$
From (\ref{2VDD-E5.8}), we have
    \begin{equation}
      \lim_{h\rightarrow0}\lim_{\delta\rightarrow0}\int(\phi^{\delta,h}\Lambda^\delta)(x,t)dx=\Lambda(x_0^t+,t).
      \label{2VDD-E5.11}
    \end{equation}
Similarly, we obtain
    \begin{equation}
      \lim_{h\rightarrow0}\lim_{\delta\rightarrow0}\int(\phi^{\delta,h}\Lambda^\delta)(x,0)dx=\Lambda(x_0+,0),
    \end{equation}
    \begin{equation}
      \lim_{h\rightarrow0}\lim_{\delta\rightarrow0}
      \int^t_0\int(\phi^{\delta,h}F^\delta)(x,s)dxds=\int^t_0F(x_0^s+,s)ds,
    \end{equation}
    \begin{equation}
      \lim_{h\rightarrow0}\lim_{\delta\rightarrow0}
      \int^t_0\int\phi^{\delta,h}\{P(\rho^\delta)-P(\tilde{\rho})\}dxds=\int^t_0
      \{P(\rho(x_0^s+,s))-P(\tilde{\rho})\}ds.\label{2VDD-E5.14}
    \end{equation}
From (\ref{2VDD-E5.6}) and (\ref{2VDD-E5.11})--(\ref{2VDD-E5.14}),
we have
    \begin{equation}
      \Lambda(x_0^t+,t)- \Lambda(x_0+,0)
      =-\int^t_0\{F(x_0^s+,s)+P(\rho(x_0^s+,s))-P(\tilde{\rho})\}ds.\label{2VDD-E5.13-1}
    \end{equation}
Since $F$ is locally H\"{o}lder continuous in
  $\mathbb{R}^2\times(0,\infty)$, standard ODE theory implies that the map $t\mapsto \Lambda(x_0^t+,t)$ is
  in $C([0,\infty))\cap C^1((0,\infty))$.
From (\ref{2VDD-E1.15}), (\ref{2VDD-E1.15-2}), (\ref{2VDD-E2.66})
and (\ref{2VDD-E2.72-1}), we have
    $$
      \int^t_0\|F(\cdot,s)\|_{L^\infty}ds\leq
      C\int^t_0(\sigma^{-2-\frac{q}{2}})^{\frac{1}{2+2q}}\leq
      Ct^{\frac{3q}{4+4q}}.
    $$
Thus, map $t\mapsto \Lambda(x_0^t+,t)$ is
  in $C^{\frac{3q}{4+4q}}([0,\infty))\cap C^1((0,\infty))$.
   From
 (\ref{2VDD-E1.15}) and
 Theorem \ref{2VDD-T1.3}, the maps  $t\mapsto
\rho(X(t,x_0)+,t)$ and $t\mapsto P(\rho(X(t,x_0)+,t))$ are
  in $C^{\frac{3q}{4+4q}}([0,\infty))\cap C^1((0,\infty))$.

   Since $F$ is locally H\"{o}lder continuous in
  $\mathbb{R}^2\times(0,\infty)$, then the map $t\mapsto \mathrm{div}u(X(t,x_0)+,t)$ is
  locally H\"{o}lder continuous on $(0,\infty)$. These finish the
  proof of Theorem \ref{2VDD-T1.6} (a).

Then, we will prove Theorem \ref{2VDD-T1.6} (c) as follows. From
(\ref{2VDD-E5.13-1}), we have
    $$
    [\Lambda(x_0^t,t)]- [\Lambda(x_0,0)]
      =-\int^t_0[P(\rho(x_0^s+,s))]ds=-\int^t_0a(s,x_0)[\Lambda(x_0^s+,s)]ds.
    $$
Using  Gronwall's inequality, we can finish the proof of Theorem
\ref{2VDD-T1.6}.
 {\hfill
$\square$\medskip}

\section{Nonphysical solution}

\textbf{Proof of Theorem \ref{2VDD-T1.7}.}

First, from (\ref{2VDD-E1.4}), we have
    $$
   \partial_r( \partial_r v+\frac{1}{r}v)=0, \ \textrm{ in }\mathcal{D}'(U),
    $$
where $U=\{(r,t)\in \mathbb{R}^+\times\mathbb{R}^+|
r\in(a(t),b(t))\}$. From the regularity (\ref{2VDD-E1.15-1}), we
have
    \begin{equation}
      v(r,t)=\alpha(t)r+\frac{\beta(t)}{r},\ \textrm{ in
      }\mathcal{D}'(U),\label{2VDD-E6.1}
    \end{equation}
where $\alpha,\beta\in L^1_{loc}([0,\infty))$.

    Then, from (\ref{2VDD-E1.30-1}) and the regularity of the solution in Theorem \ref{2VDD-T1.1}, we
    have
        $$
    \frac{d}{dt}E(t)=\left.\left\{
    vr\left((\lambda+2\mu)(v_r+\frac{v}{r})-P(\varrho)
    \right)    \right\}\right|_{r=a(t)-0},
        $$
where
    $$
    E(t)=\int^{a(t)}_{0}\left(\frac{1}{2}\varrho v^2+\overline{G}(\varrho)
    \right)rdr+\int^t_0\int_0^{a(s)}(\lambda+2\mu)(v_r+\frac{v}{r})^2rdrds.
    $$
     From (\ref{2VDD-E1.15-1}) and (\ref{2VDD-E1.31}), we get
        $$
    [v]=0,\     [(\lambda(\varrho)+2\mu)(v_r+\frac{v}{r})-P(\varrho)]=0,
    \ \textrm{ at }\ (a(t),t),
    $$
 and since $\varrho(a(t)+0,t)=0$,
we obtain
    \begin{equation}
    \frac{d}{dt}E(t)=a(t)v(a(t),t)
    (\lambda(0)+2\mu)(v_r+\frac{v}{r})(a(t)+0,t)
    .\label{2VDD-E6.2}
    \end{equation}
From (\ref{2VDD-E6.1}), we have
    \begin{equation}
    (v_r+\frac{v}{r})(a(t)+0,t)=2\frac{a(t)v(a(t),t)-b(t)v(b(t),t)}{a^2(t)-b^2(t)}.\label{2VDD-E6.3}
    \end{equation}
From (\ref{2VDD-E6.2})--(\ref{2VDD-E6.3}), we can immediately obtain
(\ref{2VDD-E1.33}).
 {\hfill
$\square$\medskip}

\section{Non-global Existence of Regular Solutions}\label{2VDD-S7}
\textbf{Proof of Theorem \ref{2VDD-T1.8}.}

Since the support of the initial spherical symmetric density
  $\rho_0$ is compact, we can assume that
    $$
  \mathrm{supp}\rho_0=\Omega(0)=\{x\in\mathbb{R}^2| |x|\leq R_0\},
    $$
  for some $R_0>0$. We set
    $$
        \Omega(t)=\{x=X(t,\alpha)|\alpha\in \Omega(0)\},
        \ t\in[0,T],
    $$
  where $X(t,\alpha)$ is defined in Theorem \ref{2VDD-T1.2}. From
  the transport equation (\ref{2VDD-E1.1})$_1$, one can easily
  show that supp$\rho(x,t)=\Omega(t)=\{x\in\mathbb{R}^2| |x|\leq
  R(t)\}$. From (\ref{2VDD-E6.1}), we have
    $$
    v(r,t)=\alpha(t)r+\frac{\beta(t)}{r},\ \textrm{ in
      }\mathcal{D}'(U),
    $$
where $u(x,t)=v(r,t)\frac{x}{r}$, $|x|=r$,
$U=\{x\in\Omega^c(t),t\in[0,T]\}$. Since $u\in C^1([0,T];H^k)$, we
have $u(x,t)\equiv0$ in $x\in \Omega^c(t)$, and
$\Omega(t)=\Omega(0)$ for all $0<t<T$.

Now, we introduce the following functional as in \cite{xin,yang3}:
    \begin{eqnarray}
      H(t)&=&\int(x-(1+t)u)^2\rho
      dx+\frac{2}{\gamma-1}(1+t)^2\int A\rho^\gamma dx\label{2VDD-E7.1}\\
        &=&\int x^2\rho dx-2(1+t)\int \rho u\cdot xdx
        +(1+t)^2\int\left(\rho u^2+\frac{2A}{\gamma-1}\rho^\gamma
        \right)dx,\ t\in[0,T].\nonumber
    \end{eqnarray}
Using (\ref{2VDD-E1.29-1})--(\ref{2VDD-E1.30-1}), the Cauchy-Schwarz
inequality and H\"{o}lder's inequality, we obtain
    \begin{eqnarray}
      H'(t)&=&\frac{4(2-\gamma)(1+t)}{\gamma-1}\int_0^\infty A\varrho^\gamma
      r dr+4(1+t)\int_0^\infty
      \lambda(v_r+\frac{v}{r})rdr\nonumber\\
        &&-2(1+t)^2\int_0^\infty(\lambda+2\mu)(v_r+\frac{v}{r})^2rdr\nonumber\\
            &\leq&\frac{4(2-\gamma)(1+t)}{\gamma-1}\int_0^\infty A\varrho^\gamma
      r dr+2\int_0^\infty
      c\varrho^\beta rdr\nonumber\\
        &\leq&\frac{4(2-\gamma)(1+t)}{\gamma-1}\int_0^\infty A\varrho^\gamma
      r dr+\frac{2c\beta}{\gamma}\int_0^\infty
      \varrho^\gamma rdr+\frac{2c(\gamma-\beta)}{\gamma}\Omega(0),\label{2VDD-E7.2}
    \end{eqnarray}
where $t\in[0,T]$. From (\ref{2VDD-E7.1})--(\ref{2VDD-E7.2}), we get
    $$
    H'(t)\leq
    \frac{2(2-\gamma)}{1+t}H(t)+\frac{2c\beta(\gamma-1)}{A\gamma(1+t)^2}H(t)
    +\frac{2c(\gamma-\beta)}{\gamma}\Omega(0)
    $$
and
    \begin{eqnarray*}
    H(t)&\leq&
    (1+t)^{4-2\gamma}e^{-\frac{2c\beta(\gamma-1)}{A\gamma(1+t)}}\left(
    H(0)+\frac{2c(\gamma-\beta)}{\gamma}\Omega(0)\int^t_0
    (1+s)^{2\gamma-4}e^{\frac{2c\beta(\gamma-1)}{A\gamma(1+s)}}ds
    \right)\nonumber\\
    &\leq&
    (1+t)^{4-2\gamma}
    H(0)+\frac{2c(\gamma-\beta)}{\gamma}\Omega(0)
    e^{\frac{2c\beta(\gamma-1)}{A\gamma}}F(t),
    \end{eqnarray*}
where $t\in[0,T]$ and
    $$
    F(t)=\left\{
    \begin{array}{ll}
    \frac{1+t}{2\gamma-3}(1-(1+t)^{3-2\gamma}),&2\gamma-3\neq0,\\
    (1+t)^{4-2\gamma}\ln(1+t),&2\gamma-3=0.
    \end{array}
    \right.
    $$
    From (\ref{2VDD-E7.1}), we have
    $$
    \int A\rho^\gamma dx \leq
    \frac{\gamma-1}{2}(1+t)^{2-2\gamma}
    H(0)+ \frac{c(\gamma-1)(\gamma-\beta)}{\gamma}\Omega(0)
    e^{\frac{2c\beta(\gamma-1)}{A\gamma}}F(t)(1+t)^{-2}:=G(t).
    $$
By the conservation of mass and H\"{o}lder's inequality, we obtain
    $$
        \int\rho_0dx=\int \rho dx
        \leq\left(\int\rho^\gamma dx
        \right)^\frac{1}{\gamma} (\Omega(0))^{\frac{\gamma-1}{\gamma}}
        \leq \left(\frac{G(t)}{A}
        \right)^\frac{1}{\gamma}
        (\Omega(0))^{\frac{\gamma-1}{\gamma}},
        \ t\in[0,T].
    $$
Since $\lim_{t\rightarrow\infty}G(t)=0$, the above inequality
implies that $T$ must be finite.
 {\hfill
$\square$\medskip}

\end{document}